\documentclass[12pt, reqno]{amsart}
\usepackage{amsfonts,latexsym,amsthm,amssymb,amsmath,amscd,bm}
\usepackage{enumitem}
\usepackage[mathscr]{euscript}
\usepackage{longtable}

\usepackage[margin = 1in]{geometry}
\usepackage{multirow}
\usepackage{color}
\usepackage{url}
\usepackage{breakurl}
\usepackage{float}
\newcommand{\bburl}[1]{\textcolor{blue}{\url{#1}}}
\newcommand{\dfn}[1]{\textcolor{blue}{\textit{#1}}}
\usepackage{rotating}
\usepackage{tikz,graphicx}
\usepackage{circuitikz}

\usepackage{array}
\newcommand{\PreserveBackslash}[1]{\let\temp=\\#1\let\\=\temp}
\newcolumntype{C}[1]{>{\PreserveBackslash\centering}m{#1}}
\newcolumntype{R}[1]{>{\PreserveBackslash\raggedleft}p{#1}}
\newcolumntype{L}[1]{>{\PreserveBackslash\raggedright}p{#1}}

\usepackage{thmtools}
\usepackage{thm-restate}
\usepackage{hyperref}

\newtheorem{thm}{Theorem}[section]
\newtheorem{lem}[thm]{Lemma}
\newtheorem{cor}[thm]{Corollary}
\newtheorem{prop}[thm]{Proposition}
\theoremstyle{definition}
\newtheorem{defi}[thm]{Definition}
\newtheorem{algo}[thm]{Algorithm}
\newtheorem{example}[thm]{Example}

\theoremstyle{remark}

\newcommand{\R}{\mathbb{R}}
\newcommand{\C}{\mathbb{C}}

\newcommand{\Z}{\mathbb{Z}}

\usepackage[dvipsnames]{xcolor}
\newcommand{\compass}[1]{\mathbf{\color{RedOrange}
{#1}}}

\newcommand{\nc}{\newcommand}

\nc{\pre}{\text{pre}}
\nc{\id}{\text{id}}
\nc{\Inv}{\text{Inv}}
\nc{\CDyer}{\mathsf{CDyer}}
\nc{\Dyer}{\mathsf{Dyer}}

\nc{\N}{\mathbb N}
\nc{\DD}{\mathbb D}
\nc{\TT}{\mathbb T}
\nc{\EE}{\mathbb E}

\nc{\cT}{\mathcal T}
\nc{\cP}{\mathcal P}
\nc{\cM}{\mathcal M}
\nc{\cC}{\mathcal C}
\nc{\cB}{\mathcal B}
\nc{\cG}{\mathcal G}
\nc{\cD}{\mathcal D}
\nc{\cA}{\mathcal A}
\nc{\cS}{\mathcal S}
\nc{\cF}{\mathcal F}
\nc{\cL}{\mathcal L}
\nc{\cR}{\mathcal R}
\nc{\be}{\mathbf{e}}
\nc{\bv}{\mathbf{v}}
\nc{\ba}{\mathbf{a}}
\nc{\vc}{\mathbf{c}}

\numberwithin{equation}{section}
\numberwithin{part}{section}

\title
{
    Enumerating pattern-avoiding translation-invariant total orders
}

\author[Li]{Stella Jiahui Li}
\email{\textcolor{blue}{\href{mailto:stellali@college.harvard.edu}{stellali@college.harvard.edu}}}
\address{Department of Mathematics, Harvard University}

\begin{document}
\begin{abstract}
    Let $n$ be a positive integer. A translation-invariant total order (TITO) with period $n$ is a total order of the integers that is invariant under translations by multiples of $n$. These structures arise naturally in the study of Coxeter groups. In particular, real $n$-TITOs are in bijection with biclosed sets of positive roots of the affine symmetric group $\widetilde S_n$. Barkley and Defant recently introduced pattern avoidance for TITOs and used it to define the affine Tamari lattice.

    The enumeration of TITOs avoiding a single pattern of length $3$ is due to Crites and Barkley--Defant. We extend this work to TITOs avoiding two patterns. Our main results include a complete enumeration of TITOs that avoid a pair of patterns in $S_3\times S_3$, as well as of TITOs that avoid a pair $(p, q)$ with $p \in S_3 \setminus \{123, 321\}$ and $q \in S_4$. Furthermore, we provide an explicit construction of the inverse of the bijection between $312$-avoiding TITOs and noncrossing arc diagrams, thereby extending the combinatorial framework introduced by Barkley.
\end{abstract}
\maketitle

\section{Introduction}

Let $n$ be a positive integer. A \dfn{translation-invariant total order} (TITO) with period $n$, or an $n$-TITO, is a total order $\preceq$ of $\Z$ such that for all $a, b \in \Z$, we have $a \preceq b$ if and only if $a +n \preceq b + n$. We say an $n$-TITO $\preceq$ \dfn{avoids} a pattern $p \in S_k$ if there is no subsequence of integers $i_1 < \cdots < i_k$ such that $i_{p_1} \preceq i_{p_2} \preceq \cdots \preceq i_{p_k}$, where $p = p_1\cdots p_k$ in one-line notation. For instance, if $p = 312$, then we say three integers $a < b < c$ form a 312-pattern in $\preceq$ if $c \preceq a \preceq b$.\footnote{For readers who have seen definitions of pattern avoidance in other contexts, this definition may appear reversed.} Moreover, we say a TITO $\preceq$ is \dfn{$(p, q)$-avoiding} if it simultaneously avoids the patterns $p$ and $q$.

TITOs arise naturally in the study of Coxeter groups. For each Coxeter group $W$, Dyer \cite{D19} introduced the \dfn{extended weak order}, which is the containment order on biclosed sets of positive roots of $W$. The finite biclosed sets are the inversion sets of the elements of $W$, and the containment order between finite inversion sets is the weak order on $W$. Dyer conjectured that the extended weak order is always a lattice. Barkley and Speyer \cite{BS25} proved this conjecture when $W$ is an affine Coxeter group by introducing combinatorial models for biclosed sets. In particular, when $W$ is the affine symmetric group $\widetilde S_n$, the biclosed sets of positive roots are in bijection with \dfn{real} $n$-TITOs, namely those in which every block of size $1$ is waxing.

It is a classical fact that the $n$-th Tamari lattice is the sublattice of the weak order on the symmetric group $S_n$ induced by the set of $312$-avoiding permutations. Analogously, the $n$-th \dfn{affine Tamari lattice} is defined as the subposet of the extended weak order on the affine permutation group $\widetilde{S}_n$ induced by the set of real $312$-avoiding $n$-TITOs. Barkley and Defant \cite{BD25+}, who introduced the affine Tamari lattice, proved that this lattice shares many of the rich combinatorial properties of the classical Tamari lattice. Their results motivate the study of pattern avoidance for TITOs. 

The enumeration of $n$-TITOs avoiding a single pattern of length $3$ was addressed by Crites \cite{C10}, and Barkley and Defant \cite{BD25+}. In particular, for every positive integer $n > 1$, Crites showed there are infinitely many $n$-TITOs that avoid $123$ or $321$ by direct construction; for $p \in S_3 \setminus \{123, 321\}$, Barkley and Defant \cite{BD25+} showed there are $\binom{2n}{n}$ $n$-TITOs that avoid $p$ by establishing a bijection between these TITOs and noncrossing arc diagrams.

In this paper, we enumerate TITOs that simultaneously avoid two patterns $p$ and $q$. For this enumeration, we extend results from \cite{B25+} and \cite{BD25+} by giving an explicit construction of the inverse map between $312$-avoiding TITOs and noncrossing arc diagrams. Our enumerative results are summarized in Theorems \ref{thm: main thm 1} and \ref{thm: main thm 2}. We use $f_n^{(p,q)}$ to denote the number of $n$-TITOs that simultaneously avoid $p$ and $q$, and we use $\mathcal{F}^{(p, q)}(x) = \sum_{n \geq 0} f_n^{(p, q)}x^n$ to denote the ordinary generating function of $f_n^{(p, q)}$. 

As in the case of permutations, there are symmetries that preserve the number of pattern-avoiding TITOs. For instance, reversing a TITO $\preceq$ shows that the number of $n$-TITOs avoiding a pattern $p$ always equals the number of $n$-TITOs avoiding the reverse of $p$. We formalize these symmetries in $\S\ref{sec: results length 3}$. The first column of each of our tables of results lists a representative from each symmetry class.

\begin{thm}\label{thm: main thm 1}
    For distinct patterns $p, q \in S_3$, the number $f_n^{(p,q)}$ of $n$-TITOs avoiding both $p$ and $q$, together with its ordinary generating function $\mathcal{F}^{(p,q)}(x)$, is given in Table \ref{tab:two length 3 enumeration}, up to the symmetries described in $\S$\ref{sec: results length 3}.
\begin{table}[h]
    \centering
    \caption{Enumeration for distinct $p, q \in S_3$.}
    \label{tab:two length 3 enumeration}
    \begin{tabular}{ | c | c | c | } 
        \hline
         $(p, q)$& $\mathcal{F}^{(p, q)}(x)$& $f_n^{(p, q)}$  \\[0.5ex] 
         \hline 
         $(312, 123)$& $\frac{x}{1-x}$ & $1$ \\ [0.5ex] \hline
         $(312, 132)$& $\frac{2x}{1-x}$ & $2$ \\[0.5ex]  \hline
         $(312, 231), (312, 213)$ & $\frac{2x}{1-2x}$ & $2^n$ \\[0.5ex]  \hline
         $(312, 321)$ & $\frac{x}{(1-2x)(1-x)}$ & $2^n - 1$ \\ [0.5ex] \hline 
         $(123, 321)$ & $0$ & $0$ \\ [0.5ex] \hline
    \end{tabular}
\end{table}
\end{thm}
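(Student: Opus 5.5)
The plan is to first reduce, via the symmetries of \S\ref{sec: results length 3}, to the listed representatives, and then to handle each row by a structural description of the avoiding TITOs. The symmetries are reversal of the order $\preceq$ and negation of the integers ($i\mapsto -i$). Together they generate a Klein four-group acting on $S_3$. Under this action the pairs of distinct patterns fall into the classes represented in Table \ref{tab:two length 3 enumeration}; I will verify this by a direct orbit count on the $15$ unordered pairs. The row $(123,321)$ I will settle first. An $n$-TITO restricts to an infinite total order on $\Z$, so by the Erd\H{o}s--Szekeres theorem any $5$ integers already contain a monotone subsequence of length $3$ in $\preceq$, i.e.\ a $123$ or $321$ pattern. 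Hence $f_n^{(123,321)}=0$ for all $n$.

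For the rows involving $312$, I will work inside the class of $312$-avoiding TITOs. I will use the bijection of \cite{BD25+} with noncrossing arc diagrams, together with the explicit inverse construction developed later in this paper. Concretely, I want a local structural description of a $312$-avoiding $n$-TITO: how the residue classes $i+n\Z$ interleave (waxing or waning blocks), and how the order compares elements $a<b$ in different classes. With that in hand, I will determine which arc diagrams correspond to TITOs that also avoid $q$, treating each $q$ separately.

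\begin{itemize}
\item For $q=123$ and $q=132$, I expect the extra avoidance to force very rigid behaviour. Every translate chain must be waning (a waxing class $i\prec i+n\prec i+2n$ already gives a $123$), and the diagram is forced to be a single fixed shape, giving $1$ TITO. For $132$ I expect one additional orientation, giving $2$. I will verify these small cases by hand for $n=1,2$ to confirm the constants, with $n=1$ giving $1$ and $2$ respectively.
\item For $q=231$ or $213$, I expect each of the $n$ residue positions to make an independent binary choice, which gives $2^n$. Adjacent positions either merge into the same block or not, and the block is either waxing or waning, subject to compatibility constraints that I will check cut the count to exactly two options per position. I will prove this by a bijection to binary words, built recursively by removing the largest residue, and check it against the row $n=1$ where $f_1=2$.
\item For $q=321$, the count $2^n-1$ suggests the same binary-word encoding with exactly one word excluded, namely the one that forces a waning class (a $321$ pattern $i\succ i+n\succ i+2n$ would appear). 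I plan to reuse the $(312,231)$ encoding and identify the single forbidden configuration.
\end{itemize}

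The generating functions then follow immediately from the counts.

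I expect the main obstacle to be the $(312,231)$ and $(312,321)$ cases. The infinitude of TITOs means pattern occurrences can involve far-apart translates, so I must show that avoidance is detected by a bounded window (say, elements within distance $2n$). I would first try to prove such a window lemma for $312$-avoiding TITOs using the arc diagram description, and only then do the case analysis.
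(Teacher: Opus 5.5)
Your symmetry reduction is fine. The Erd\H{o}s--Szekeres argument for $(123,321)$ is correct and more elementary than the paper's block argument, since any total order on five integers already contains $123$ or $321$. The genuine gap is in the rows with nontrivial counts. For $(312,231)$, the structure you guess (residues merging into several blocks, each waxing or waning) is false. A waxing block followed by a waning block always contains $231$: take $y$ in the waxing block, $x<y$ in the waning block and $z=x+mn>y$; then $y\preceq z\preceq x$. The same configuration always contains $132$ as well. So by Lemma~\ref{lem:312 blocks}, every $(312,231)$-avoider is either $[\underline{n,\dots,1}]$ or a single waxing block, i.e.\ an affine permutation. The whole content of the row is therefore $a_n^{(312,231)}=2^n-1$, and your unspecified ``remove the largest residue'' bijection does not supply it. The paper gets it as follows: $a_n^{(312,231)}=a_n^{(231,312)}$ by Lemma~\ref{lem:inversion}; Crites's rotation moves a $231$-avoiding affine permutation into $S_n$ (Lemma~\ref{lem:231 avoiding in S_n}); splitting at the entry $n$ (Lemma~\ref{lem: formula affine avoid n---}) then gives $\sum_{j=0}^{n-1}(n-j)s_j^{(231,312)}$ with $s_0=1$ and $s_j=2^{j-1}$. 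If you want to keep a binary encoding, it lives on arcs rather than blocks, and you would have to prove it. If $b\preceq a$ is a cover with $a<b$ in a $(312,231)$-avoider, then $b=a+1$: placing $a+1$ before $b$ gives a $231$, and placing it after $a$ gives a $312$. Conversely, a diagram with only length-one arcs is either the full cycle or yields a concatenation of decreasing runs of consecutive integers. So the avoiders correspond to the $2^n$ subsets of $\{\gamma_{1,2},\dots,\gamma_{n-1,n},\gamma_{n,1}\}$.

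Your plan for $(312,321)$ fails as stated, because the $(312,321)$-avoiders are not the $(312,231)$-avoiders with $[\underline{n,\dots,1}]$ removed. For $n=3$, the single waxing block $[2,3,1]$ avoids $312$ and $321$ but contains $231$ (namely $2\preceq 3\preceq 1$), while $[3,2,1]$ avoids $312$ and $231$ but contains $321$. After noting that $321$ forbids a waning block ($x+2n\prec x+n\prec x$), you need an independent count of $(312,321)$-avoiding affine permutations. In the paper, the agreement with $2^n-1$ comes from $a_n^{(312,321)}=a_n^{(231,321)}$ together with $s_j^{(231,321)}=s_j^{(231,312)}$ inside the same sum. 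That is an equality of counts, not of sets. Finally, the ``window lemma'' you identify as the main obstacle is not needed: once the block structure is known, occurrences are handled directly. For instance, an inversion $(a,b)$ inside a waxing window satisfies $a<b<a+n$ by Lemma~\ref{lem:windows of 312}. Then $b\preceq a\preceq a+n$ is a $213$ and $b-n\preceq b\preceq a$ is a $132$. This forces the waxing block to be increasing, and together with the two-block exclusion for $132$ it gives the rows $2^n$ and $2$, which your proposal only states as expectations.
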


\begin{thm}\label{thm: main thm 2}
    For $q \in S_4$, the number $f_n^{(312,q)}$ of $n$-TITOs avoiding both $312$ and $q$, together with its ordinary generating function $\mathcal{F}^{(312,q)}(x)$, is given in Table \ref{tab:one length 4 enumeration intro}. Together with the symmetries described in $\S$\ref{sec: results length 3}, this determines $f_n^{(p,q)}$ for every $p \in S_3 \setminus \{123, 321\}$ and every $q \in S_4$.
    
\begin{longtable}{|C{2.5cm}|C{5cm}|C{5cm}|}
    \caption{Enumeration for $(312, q)$ with $q \in S_4$. 
    We have $\lambda_1 = \frac{1}{\alpha}, \lambda_2 = \frac{1}{\beta}, \lambda_3 = \frac{1}{\overline{\beta}}$, where 
    $\alpha, \beta, \overline{\beta}$ are roots of $3x^3 - 5x^2 + 4x - 1$.}
    \label{tab:one length 4 enumeration intro} \\
    \hline
    $(312, q)$ & $\displaystyle \mathcal{F}^{(312, q)}(x)$ & $f_n^{(312, q)}$ \\[0.5ex] 
    \hline
    \endfirsthead

    \hline
    $(312, q)$ & $\displaystyle \mathcal{F}^{(312, q)}(x)$ & $f_n^{(312, q)}$ \\[0.5ex] 
    \hline
    \endhead

    \hline
    \endfoot

    $(312, 1234)$ & \vspace{6pt}$\displaystyle \frac{x}{1-x}\vspace{6pt}$ & $1$ \\[0.5ex] \hline
    $(312, 1243)$ & \vspace{6pt}$\displaystyle \frac{2x}{1-x}\vspace{6pt}$ & $2$ \\[0.5ex] \hline
    $(312, 4321)$ & \vspace{6pt}$\displaystyle \frac{x(2x^2 - 2x + 1)}{(1-2x)(1-3x+x^2)}\vspace{6pt}$ &
       $\left(\frac{3+\sqrt{5}}{2}\right)^n + \left(\frac{3-\sqrt{5}}{2}\right)^n - 2^n $ \\[0.5ex] \hline
    $(312, 3421)$ $(312, 2431)$ & \vspace{6pt}$\displaystyle \frac{x(2 - 8x + 11x^2 - 4x^3)}{(1-x)(1-2x)(1-3x+x^2)}\vspace{6pt}$ &
       $\left(\frac{3+\sqrt{5}}{2}\right)^n + \left(\frac{3-\sqrt{5}}{2}\right)^n - 2^n + 1 $ \\[0.5ex] \hline
    $(312, 3241)$ $(312, 3214)$ $(312, 2314)$ & \vspace{6pt}$\displaystyle \frac{-x(x^2-2x+2)}{(x-1)(x^2-3x+1)}\vspace{6pt}$ &
       $\left(\frac{3+\sqrt{5}}{2}\right)^n + \left(\frac{3-\sqrt{5}}{2}\right)^n - 1 $ \\[0.5ex] \hline
    $(312, 2341)$ & \vspace{6pt}$\displaystyle \frac{x(3x^3-9x^2+6x-2)}{(1-x)(3x^3 - 5x^2 + 4x - 1)}\vspace{6pt}$ &
       $-2 + \lambda_1^n + \lambda_2^n + \lambda_3^n$ \\[0.5ex] \hline
    $(312, 2134)$ $(312, 2143)$ $(312, 1324)$ $(312, 1432)$ $(312, 1342)$ &
       \vspace{6pt}$\displaystyle \frac{2x}{1-2x}\vspace{6pt}$ & $2^n$ \\[0.5ex] 
\end{longtable}
\end{thm}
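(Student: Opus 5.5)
The plan is to work throughout in the noncrossing arc diagram model of $312$-avoiding $n$-TITOs. By the Barkley--Defant bijection, together with the explicit inverse construction announced in the introduction, every $312$-avoiding $n$-TITO corresponds to a unique noncrossing arc diagram on $n$ points on a circle. These diagrams are counted by $\binom{2n}{n}$. For each $q\in S_4$ in Table~\ref{tab:one length 4 enumeration intro}, I would do three things in order: read off how the TITO is built from its arc diagram, translate ``$q$ occurs'' into a forbidden local configuration of arcs, and count the diagrams with no such configuration.

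\textbf{Step 1: reduce the patterns to be checked.} The second sentence of the theorem follows directly from the symmetries of $\S$\ref{sec: results length 3}. Reversal and complementation send $312$ to each of $132, 213, 231$ and act on $S_4$, so every pair $(p,q)$ with $p\in S_3\setminus\{123,321\}$ is equivalent to some $(312,q')$. The remaining work is to decide which $q\in S_4$ must actually be treated. Any $q$ containing $312$ gives $f_n^{(312,q)}=\binom{2n}{n}$ trivially. The symmetry fixing $312$ (reverse-complement-inverse, where it is available for TITOs) cuts down the list further. This should leave exactly the patterns in the table.

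\textbf{Step 2: patterns that force the answer from Theorem~\ref{thm: main thm 1}.} For $q\in\{1234,1243,2134,2143,1324,1432,1342\}$ I would show that avoiding $312$ and $q$ is equivalent to avoiding $312$ and some length-$3$ pattern. This is the TITO analogue of the classical fact that periodicity lets one ``extend'' a $3$-pattern to a $4$-pattern by translating one entry by a multiple of $n$.
\begin{itemize}
\item For $1234$: an occurrence of $123$ can be extended by adding $n$ to its largest entry repeatedly. This requires checking that $i+kn$ eventually lies above in $\preceq$, which uses the waxing/waning block structure.
\item The other patterns are handled similarly, and the answers $1$, $2$, $2^n$ then come from Theorem~\ref{thm: main thm 1}.
\end{itemize}

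\textbf{Step 3: the genuinely new patterns.} For $4321$, $3421$, $2431$, $3241$, $3214$, $2314$ and $2341$, I would set up a transfer-matrix or first-return decomposition of noncrossing arc diagrams on a cycle. Cut at position $1$ and record whether an arc wraps through $0$. The linear, non-wrapping part should contribute a Catalan-like factor, which under the extra avoidance collapses to Fibonacci-type counts: the $\frac{1}{1-3x+x^2}$ is the generating function of odd-indexed Fibonacci numbers. The cyclic closure then produces the trace form $\lambda_1^n+\lambda_2^n+\cdots$, which is why the answers are sums of $n$-th powers of eigenvalues. Concretely, the plan for each such $q$ is:
\begin{enumerate}
\item Identify the finite set of local states, namely the nesting depth and the orientation of arcs crossing a cut.
\item Write the transfer matrix $M$.
\item Obtain $f_n^{(312,q)}=\operatorname{tr}(M^n)-c$, where the correction $c$ ($2^n$ or $1$) accounts for degenerate diagrams. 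These are the ones in which every block is a singleton, or where waxing and waning choices overcount.
\end{enumerate}
For $2341$ the matrix is $3\times 3$ with characteristic polynomial reciprocal to $3x^3-5x^2+4x-1$.

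\textbf{Main obstacle.} The hard step is translating containment of a length-$4$ pattern, which may involve integers lying in different periods, into a finite, local condition on the arc diagram. This is needed so that a finite transfer matrix exists. I would first prove a lemma bounding the span of a minimal occurrence: if $q$ occurs, then it occurs with all entries inside a window of length at most $2n$. Then I would verify the resulting formulas against brute-force computation for small $n$.
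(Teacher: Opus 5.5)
\textbf{The gap is Step~3.} It carries all the content of the seven nontrivial rows, but it is only a template. You never specify the states or the matrix. The $3\times 3$ matrix for $2341$ and the correction terms are read off from the table rather than derived. They are not even consistent with it: with your $3\times 3$ matrix, the correction for $2341$ would have to be $c=2$, which is neither $2^n$ nor $1$.

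More importantly, the existence of a finite transfer matrix is exactly what needs proof. Noncrossing arc diagrams on the annulus have unbounded nesting, unbounded path lengths and arbitrarily long arcs. Scanning the $n$ marked points is therefore finite-state only after you show that each avoidance condition bounds these features. A bound on the span in $\mathbb{Z}$ of a minimal occurrence of $q$ says nothing about locality along the boundary, since a single arc can join far-apart points.

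Step~1 has a related problem. Inversion is not a symmetry of TITOs; only $\Psi_{\leftrightarrow}$ and $\Psi_{\updownarrow}$ are, and no nontrivial combination of them fixes $312$. So all fourteen $312$-avoiding $q\in S_4$ must be treated. Coincidences such as $f_n^{(312,3241)}=f_n^{(312,3214)}=f_n^{(312,2314)}$ are not consequences of symmetry and need separate proofs. There is also a smaller slip in Step~2: $1432$ is not equivalent to any length-$3$ pair. The $(312,1432)$-avoiders are exactly the $(312,321)$-avoiders together with $[\underline{n,\dots,1}]$, although the count $2^n$ still holds.

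\textbf{The missing idea is the block structure of Lemma~\ref{lem:312 blocks}.} A $312$-avoiding TITO is a single waxing block, a strictly decreasing waning block, or a waxing block followed by a strictly decreasing waning block. Splitting $f_n=a_n+b_n$ into affine and projective parts makes most of the problem elementary.

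For the projective part:
\begin{itemize}
\item $4321$ forbids a waning block, so $b_n=0$.
\item $1234$ forbids any waxing block, so $b_n=1$.
\item $1243,1342,1432,2341,2431,3421$ forbid a waxing block followed by a waning one, so only $[\underline{n,\dots,1}]$ remains and $b_n=1$.
\item $3241,2134,2143,1324$ force the waxing part to be increasing, giving $b_n=2^n-1$.
\end{itemize}
Only $3214$ and $2314$ need arc diagrams. Via the inverse construction, avoidance becomes a condition on the walkers: no nested arcs and no two-step paths for $3214$, and no arcs of length greater than one for $2314$. Each condition admits $2^{k-1}$ walkers on $k$ points. Summing over the cycle and over weak compositions of the walker sizes gives exactly $\mathcal{A}^{(312,4321)}(x)$.

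For the affine part, Crites' results give $a_n^{(312,p)}=a_n^{(231,p^{-1})}$ and let you rotate the window into $S_n$. When $p^{-1}$ starts with $4$, splitting at the position of $n$ gives $\sum_j (n-j)\,s_j^{(231,p^{-1})}s_{n-j-1}^{(231,p^{-1}_{[2:4]})}$. There are analogous formulas when $p^{-1}$ starts with $14$, or starts with $3$ and ends with $4$. West's generating functions for $(231,4321)$ and $(231,4123)$ then produce the denominators $1-3x+x^2$ and $1-4x+5x^2-3x^3$. The weight $n-j$, which counts rotations, plays the role of your ``cyclic closure'' and yields the power-sum form of the answers.
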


This paper is organized as follows. In $\S$\ref{sec: background}, we provide the necessary background on TITOs, pattern avoidance, affine permutations, and related combinatorial objects. In $\S$\ref{sec: arc diagrams}, we review the bijection between $312$-avoiding TITOs and noncrossing arc diagrams and give an explicit recursive construction of its inverse. In $\S$\ref{sec: results length 3}, we enumerate TITOs avoiding two patterns of length $3$. In $\S$\ref{sec: results length 4}, we enumerate TITOs that avoid both a length $3$ pattern in $S_3 \setminus \{123, 321\}$ and a pattern of length $4$. The proofs proceed by reductions to permutation enumeration in certain cases and by an analysis of noncrossing arc diagrams in others.
In the latter cases, our recursive inverse construction translates additional pattern-avoidance conditions into structural restrictions on noncrossing arc diagrams, which we then enumerate. Finally, in $\S\ref{sec: future directions}$, we discuss possible future research directions.

\section{Background}\label{sec: background}
Throughout this paper, we fix a positive integer $n$ representing the period of a TITO. We begin with the following example of a TITO when $n = 2$ to introduce structural properties of TITOs:

\begin{equation}\cdots\preceq -1 \preceq 1 \preceq 3 \preceq \cdots \preceq  4 \preceq 2 \preceq 0 \preceq\cdots
\label{eq:tito1}\end{equation}

In (\ref{eq:tito1}), observe that the TITO ``splits up'' into two components, where the odd integers all precede the even integers. In fact, this decomposition always occurs, and we can characterize the components as follows.

\begin{defi}[\cite{B25+}, Definition 4.3]
    Given any total order $\preceq$, a subset $I \subseteq \Z$ is \dfn{order-convex} if $a \preceq b \preceq c$ and $a, c \in I$ together imply that $b \in I$.

    Let $\preceq$ be a TITO. A \dfn{block} of $\preceq$ is a nonempty, order-convex subset $I$ with the following properties:
    \begin{enumerate}
        \item the ordering of $I$ by $\preceq$ has no minimal or maximal element;
        \item for any $a,c\in I$, the interval $\{b\in I\mid a\preceq b \preceq c\}$ is finite.
    \end{enumerate}
    The \dfn{size} of a block $I$ is the number of residue classes modulo $n$ appearing in $I$, which we denote by $|I|$. The sum of the sizes of all blocks of an $n$-TITO is always $n$. We say that $I$ is a \dfn{waxing} block if $x\preceq x+n$ for all $x\in I$. We say that $I$ is a \dfn{waning} block if $x+n \prec x$ for all $x\in I$. 
\end{defi}

Barkley \cite{B25+} proved that a block is either waxing or waning. The information contained in a block can be captured succinctly using a window, defined as follows.

\begin{defi}[\cite{B25+}, Section 4.1]
    A \dfn{window} for a block $I$ consists of $k$ consecutive elements $a_1, \dots, a_k$ of $I$ under $\preceq$, where $k$ is the size of $I$. If $I$ is a waxing block, we denote this window by $[a_1, \dots, a_k]$. Otherwise, $I$ must be a waning block, and we denote this window by $[\underline{a_1, \dots, a_k}].$ A \dfn{window notation} for the TITO $\preceq$ consists of a window for each of its blocks, ordered left to right in the same order that the corresponding blocks appear in $\preceq$.
\end{defi}

For example, the window notation of the TITO in (\ref{eq:tito1}) is $[1][\underline{2}].$ As shown in \cite{B25+}, each TITO has a unique block decomposition. In particular, the blocks partition $\Z$ in such a way that congruent integers are in the same block. Each block is determined by its window, and elements in the same window always belong to different congruence classes. Therefore, we specify TITOs by their window notation.

A central problem in the theory of TITOs is to determine the number of $p$-avoiding TITOs for a given pattern $p \in S_k$. Crites \cite{C10}, and Barkley and Defant \cite{BD25+} proved the following result for patterns of length 3. (We use $f_n^{(p)}$ to denote the number of $p$-avoiding TITOs with period $n$.)

\begin{thm}[\cite{C10} Theorem 1, \cite{BD25+} Corollary 5.2]
    For $p \in S_3$ and $n\geq 2$, we have
    \begin{align*}
        f_n^{(p)} = \begin{cases}
            \infty & \text{if } p \in \{123, 321\};  \\
            \binom{2n}{n} &\text{otherwise.}
        \end{cases}
    \end{align*}
\end{thm}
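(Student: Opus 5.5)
This theorem is quoted from \cite{C10} and \cite{BD25+}, so I would reconstruct its two halves separately. For $p\in\{123,321\}$ I would exhibit infinitely many $p$-avoiding $n$-TITOs. For the other four patterns I would reduce by symmetry to $p=312$ and count via a bijection with noncrossing arc diagrams.

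\emph{Infinitude.} Reversing a TITO (replacing $\preceq$ by its opposite order) turns $123$-avoidance into $321$-avoidance, so it suffices to treat $321$. For $n\ge 2$ I would use a single waxing block of size $n$ whose window is $[a_1,\dots,a_n]$ with the $a_i$ pairwise incongruent and spread apart by a free parameter. For example, take $n=2$ and the window $[0, 2m+1]$ for $m\ge 0$. The order is then $\cdots \preceq -2\preceq 2m-1\preceq 0\preceq 2m+1\preceq 2\preceq\cdots$. This interleaves two increasing chains, the evens and the odds, so every decreasing subsequence in $\preceq$ has length at most $2$, and the order avoids $321$. Different values of $m$ give different TITOs, so there are infinitely many. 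For general $n$ I would interleave $n$ increasing residue-class chains in the same way and check that a $321$ pattern would need a triple $a<b<c$ with $c\preceq b\preceq a$, which is impossible when the shifts are monotone. The thing to get right is the choice of shifts that makes this ``at most $2$'' bound hold for larger $n$. I would try windows of the form $[0, 1+nm, 2+2nm,\dots]$ and argue by comparing residues.

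\emph{The $\binom{2n}{n}$ count.} The maps reverse, complement $x\mapsto -x$, and their composition act on patterns the same way they act on permutations. These maps permute $\{132,213,231,312\}$ transitively, so it suffices to count $312$-avoiding TITOs. I would then build a map from these TITOs to noncrossing arc diagrams on $n$ points on a circle (the affine analogue of the diagrams Reading used for $312$-avoiding permutations). The arcs would record cover relations between consecutive elements of the blocks, and the waxing/waning data would be recorded as extra decoration. I would show that $312$-avoidance is exactly the noncrossing condition, and then count the decorated diagrams. This is cleanest via a generating function decomposition at the point $1$: a first-return decomposition gives $C(x)$-type recursions whose solution is $1/\sqrt{1-4x}$, whose coefficients are $\binom{2n}{n}$.

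The main obstacle is bijectivity, and specifically surjectivity. Every noncrossing diagram must be realized by a unique $312$-avoiding TITO, and the waning and size-$1$ blocks have to be handled correctly. I would prove surjectivity by a recursive reconstruction, inserting residue classes one at a time and checking that each insertion preserves $312$-avoidance.
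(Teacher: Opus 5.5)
Your reduction of $\{132,213,231,312\}$ to $312$ via reversal and $x\mapsto -x$ is correct, and so is your $n=2$ example. The infinitude argument fails for $n\ge 3$, though. Interleaving $n$ increasing residue-class chains only bounds decreasing subsequences by $n$, not by $2$, and the windows $[0,1+nm,2+2nm,\dots]$ you suggest all contain $321$. For $m\ge 1$, take the third entry of one window, the second entry of the next, and the first entry of the one after that. This gives $2+2nm\preceq (n+1)+nm\preceq 2n$, and $2n<(n+1)+nm<2+2nm$ because $nm>n-1$. For $n=3$, $m=1$ this is $8\preceq 7\preceq 6$ in $[0,4,8]$. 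For $m\le -1$, the first three window entries already decrease.

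The repair is to use exactly two chains. For $k\ge 0$, take the single waxing window $[1,\,2+kn,\,3+kn,\dots,n+kn]$. The class of $1$ is one increasing chain. The other $n-1$ classes appear along $\preceq$ as the increasing sequence $\dots,2+kn,\dots,n+kn,2+(k+1)n,\dots$. Any three elements therefore contain two from one chain, which appear in increasing order, so there is no $321$. Since $2+kn$ covers $1$, distinct $k$ give distinct TITOs, and reversal handles $123$.

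For the $\binom{2n}{n}$ half, the object you describe is not the right one, and the substantive steps are only named. The diagrams of \cite{BD25+}, reviewed in \S\ref{sec: arc diagrams}, live on an annulus, with arcs directed clockwise around the puncture. They record only the lower walls (covers $b\preceq a$ with $a<b$), not all covers. The puncture is what separates $\gamma_{a,b}$ from $\gamma_{b,a}$ and allows loops $\gamma_{a,a}$. The waning block is not a decoration: by Lemmas~\ref{lem:312 blocks} and~\ref{lem:cycle}, it is the unique cycle, winding around the puncture.

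A disk with a waxing/waning decoration loses exactly the crossing information that matters. For instance, when $n=3$ the loop $\gamma_{1,1}$ coexists with $\gamma_{2,3}$ (this is $[3,2][\underline{1}]$) but crosses $\gamma_{3,2}$. This matches the fact that $[5,3][\underline{1}]$ contains $5\preceq 3\preceq 4$.

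The real content is showing that lower walls determine a $312$-avoiding TITO and that every noncrossing diagram occurs (\cite[Prop.~5.1]{BD25+}, or Algorithm~\ref{algo: inverse construction} here). Your ``insert classes one at a time'' plan does not yet address this. Your first-return count at $v_1$ is also not routine, since arcs and the cycle can pass over $v_1$.

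Once the bijection is available, here is a count that works. Acyclic diagrams are the $312$-avoiding affine permutations, counted by $\sum_{j=0}^{n-1}(n-j)C_jC_{n-1-j}$ via Lemmas~\ref{lem:inversion} and~\ref{lem:231 avoiding in S_n}. A diagram with a cycle is a cycle together with a walker in each gap. A gap with $m$ points admits $C_m$ walkers (noncrossing partitions of $[m]$). These are counted exactly as $b_n^{(312,3214)}$ is, with $w_m$ replaced by $C_m$. Writing $C(x)=\sum_m C_mx^m$, both classes have generating function $xC(x)\bigl(C(x)+xC'(x)\bigr)=\tfrac12\bigl((1-4x)^{-1/2}-1\bigr)$. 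So each class has $\binom{2n-1}{n}$ elements, and the total is $\binom{2n}{n}$.
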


In this paper, our goal is to understand and enumerate TITOs that simultaneously avoid pairs of patterns. Specifically, we consider:
\begin{enumerate}
    \item pairs $(p, q) \in S_3 \times S_3$ with $p \neq q$;
    \item pairs $(p, q) \in \left(S_3 \setminus \{123, 321\}\right) \times S_4$.
\end{enumerate}

The following lemma classifies the block structure of $312$-avoiding TITOs, which will be crucial to our subsequent enumeration.  

\begin{lem}[\cite{BD25+}, Lemmas $3.7$ and $3.8$]\label{lem:312 blocks}
    Let $\preceq$ be a $312$-avoiding TITO. Then $\preceq$ has at most two blocks. If $\preceq$ has two blocks, then the left block is waxing and the right block is waning. If $\preceq$ has a waning block, then the elements of the waning block appear in decreasing order.
\end{lem}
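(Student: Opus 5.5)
We prove the three assertions of Lemma~\ref{lem:312 blocks} in turn, each time producing an explicit $312$-pattern, i.e.\ integers $a<b<c$ with $c\preceq a\preceq b$, from a forbidden configuration. Two facts from \cite{B25+} do the work: every block is either waxing or waning, and the blocks partition $\Z$ into unions of congruence classes, so each residue class modulo $n$ lies in exactly one block. Inside a block, since order-intervals are finite and $x\mapsto x+n$ preserves the block, for fixed $x$ the translates $x+kn$ run off to $+\infty$ and $-\infty$ of the block's order. In a waxing block, $x+kn$ climbs the order as $k\to+\infty$; in a waning block it descends.

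\textbf{Waning blocks are decreasing.} Let $I$ be waning and take $a<b$ in $I$, aiming to show $b\prec a$. Suppose instead $a\prec b$. Choose $k\gg 0$ with $c=a+kn>b$. Because $I$ is waning, $c\preceq a$, so $c\preceq a\preceq b$ with $a<b<c$, which is a $312$-pattern. Hence $b\prec a$, and the elements of the waning block appear in decreasing order.

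\textbf{At most one waxing and one waning block, in that order.} Suppose $I\prec J$ as blocks, meaning every element of $I$ precedes every element of $J$.
\begin{itemize}
\item If $J$ is waxing, pick $a\in I$ and $b\in J$. Translating $b$ by a multiple of $n$ keeps it in $J$, so we may assume $b>a$. Choose $c\in I$ with $c>b$, possible since $I$ is a union of residue classes. We also need $c\preceq a$. Inside $I$, whether waxing or waning, some translate of $a$ or of $c$ achieves this: if $I$ is waxing, replace $a$ by $a+kn$ with $k$ large so that $c\preceq a$, and then re-choose $b$ larger than $a$ and $c$ larger than $b$; the waning case is handled by the previous item applied to $I$. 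Then $c\preceq a\preceq b$ gives a $312$-pattern.
\item If $I$ is waning, the symmetric argument applies.
\end{itemize}
Hence any two blocks must be waxing followed by waning, so there are at most two blocks, and when there are two, the left one is waxing and the right one is waning.

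\textbf{The main obstacle} is the waxing-left case, where the required ordering relations must be obtained simultaneously. I would handle it cleanly as follows. In a waxing block, for a fixed residue class the large integers sit high in the order, so elements large in value are large in $\preceq$. Take $x\in I$ huge; then some $y\in I$ with $y<x$ is order-above $x$, or else some element of $J$ lies far up. I expect to combine this with the fact that all of $J$ lies above all of $I$ to produce $c\in I$, $a\in I$, and $b\in J$ with $a<b<c$ and $c\preceq a$. This requires selecting an inversion inside $I$ between two residue classes when $|I|\ge2$, and in the case $|I|=1$ using the interleaving of residues between $I$ and $J$ instead.
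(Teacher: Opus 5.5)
Your proof that waning blocks are decreasing is correct. So is your treatment of a waning left block $I$: take $a\in I$, $b\in J$ with $b>a$, and $c=a+kn>b$, so that $c\prec a\prec b$.

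\textbf{The gap.} The case of two waxing blocks $I\prec J$, which you flag as the main obstacle, is not closed by either of your attempts. With $a,c\in I$ and $b\in J$ you need $c\preceq a$ and $c>a$ inside $I$. Your fix is to push $a$ up to $a+kn$ and then re-choose $b>a$ and $c>b$. This is circular: re-choosing $c$ larger destroys the relation $c\preceq a$ you just arranged. Worse, such a pair need not exist at all. For $n=3$ and the TITO $[1,2][3]$, both blocks are increasing in integer order, so $I$ has no inversion. A direct check shows that every $312$-occurrence $c\preceq a\preceq b$ there has $c\in I$ and $a,b\in J$. Hence no choice with $a\in I$ can ever work. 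Your fallback of ``selecting an inversion inside $I$ when $|I|\ge 2$'' fails on this example, and the $|I|=1$ ``interleaving'' idea is not an argument as stated.

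\textbf{The missing idea.} Let the waxing property of $J$ supply the ``$12$'' part of the pattern, and take the ``$3$'' from $I$. Choose any $a\in J$ and set $b=a+n$. Then $b\in J$ and $a\prec b$, since $J$ is waxing. Pick $c\in I$ with $c>a+n$, which is possible because $I$ is a union of residue classes. Since all of $I$ precedes all of $J$, we have $c\prec a$. Thus $c\preceq a\preceq b$ with $a<b<c$, a $312$-pattern.

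This rules out a waxing right block whatever the type of $I$, and it mirrors your waning-left argument. Together they show that any two blocks must be waxing followed by waning. Your conclusion of at most two blocks then follows, since a middle block of three would have to be both waning and waxing, which is impossible because $x+n\neq x$.
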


We also make the following observation about the windows in the window notation of $312$-avoiding TITOs.

\begin{lem}\label{lem:windows of 312}
    Let $\preceq$ be a $312$-avoiding TITO. Fix a window notation, and let $a, b$ be any two integers in the same window with $a < b$ and $b \preceq a$. Then $b - a < n$.
\end{lem}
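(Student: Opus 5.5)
The approach is to argue by contradiction, splitting according to whether the window lies in a waxing or a waning block. Suppose $a<b$ lie in the same window, $b\preceq a$ (so $b \prec a$ since $a \neq b$), and $b-a\ge n$. Elements of a window lie in distinct residue classes modulo $n$, so $n\nmid b-a$, and hence $b-a>n$. In particular the integer $a+n$ satisfies $a<a+n<b$. Moreover, $a+n$ lies in the same block $I$ as $a$, because blocks are unions of residue classes. The two cases of Lemma~\ref{lem:312 blocks} then finish the argument.

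\emph{Waxing case.} If $I$ is waxing, then $a\preceq a+n$, which gives
\[
b\prec a\prec a+n, \qquad a<a+n<b .
\]
The largest of the three integers comes first, then the smallest, then the middle one. So $a<a+n<b$ form a $312$-pattern, contradicting that $\preceq$ avoids $312$.

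\emph{Waning case.} If $I$ is waning, Lemma~\ref{lem:312 blocks} says the elements of $I$ appear in decreasing order under $\preceq$. Since $a+n\in I$ and $a<a+n<b$, this gives $b\prec a+n\prec a$. However, a window consists of consecutive elements of $I$ under $\preceq$, so no element of $I$ outside the window can lie strictly between two window elements. The integer $a+n$ is not in the window, since it is congruent to $a$ and distinct from it. This is a contradiction.

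In both cases we reach a contradiction, so $b-a<n$. The only delicate points are that $b-a$ cannot equal a multiple of $n$, which the distinct-residues property of windows handles, and that $a+n$ belongs to the same block. Neither is a real obstacle; the one step needing care is matching the triple to the $312$ convention in the waxing case.
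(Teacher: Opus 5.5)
Your proof is correct. Your waxing case is the paper's argument: both use the triple $b \prec a \prec a+n$ with $a < a+n < b$. You rightly observe that only $a \prec a+n$ is needed there; the paper also records $a \preceq b+n$ from window consecutiveness, but never uses it.

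The routes differ in the waning case.
\begin{itemize}
\item You invoke the decreasing-order part of Lemma~\ref{lem:312 blocks}. This places $a+n$ strictly between the window elements $b$ and $a$, which is impossible because a window is an interval of $(I,\preceq)$ with distinct residues.
\item The paper instead exhibits an explicit $312$ occurrence $b+n \preceq a \preceq b-n$ on the integers $a < b-n < b+n$. It uses only the waning property $b+n \prec b$ and the window consecutiveness $a \preceq b-n$.
\end{itemize}

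The paper's version is therefore self-contained: it gets the contradiction directly from $312$-avoidance, without relying on the external fact that waning blocks are decreasing.

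Your version is shorter once that fact is granted, and it shows the waning case is essentially immediate from it. If a block is decreasing under $\preceq$, any window is just $|I|$ consecutive members of $I$ in integer order, and these automatically span fewer than $n$ integers.
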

\begin{proof}
    Assume for contradiction that $b - a \geq n$. Since $b \not\equiv a \pmod{n}$, we know $b - a > n$. Recall that the elements of a window are consecutive in $\preceq$ among the elements of their block: if the block is waxing with window $[a_1, \dots, a_k]$, then $a_1 \preceq \cdots \preceq a_k \preceq a_1 + n$, and if it is waning, then $a_1 \preceq \cdots \preceq a_k \preceq a_1 - n$. In particular, $a \preceq b + n$ in the waxing case and $a \preceq b - n$ in the waning case.

    If $a$ and $b$ are in the same waxing block, then $$b \preceq a \preceq b+n \preceq a+n,$$ and $b, a, a+n$ form a $312$ pattern. 

    If $a$ and $b$ are in the same waning block, then $$b+n \preceq b \preceq a \preceq b-n,$$ and $b+n, a, b-n$ form a $312$ pattern. 
\end{proof}

We introduce the following helpful notation. Affine permutations and projective TITOs are defined and discussed in more detail in $\S \ref{sec: affine permutations}$ and $\S \ref{sec: titos count}$, respectively. Here, a TITO is called \dfn{projective} if it is not an affine permutation.
\begin{align*}
    f_n^{(p, q)} &= \text{number of $(p, q)$-avoiding TITOs}; \\
    s_n^{(p, q)} &= \text{number of $(p, q)$-avoiding permutations in $S_n$}; \\
    a_n^{(p, q)} &= \text{number of $(p, q)$-avoiding affine permutations in $\tilde{S}_n$}; \\
    b_n^{(p, q)} &= \text{number of $(p, q)$-avoiding projective TITOs} =  f_n^{(p, q)} - a_n^{(p, q)}.
\end{align*}

For each of the quantities above, we define the corresponding ordinary generating function:
\begin{align*}
    &\mathcal{F}^{(p, q)}(x) = \sum_{n \geq 0} f_n^{(p, q)}x^n; \\ &\mathcal{S}^{(p, q)}(x) = \sum_{n \geq 0} s_n^{(p, q)}x^n; \\
    &\mathcal{A}^{(p, q)}(x) = \sum_{n \geq 0} a_n^{(p, q)}x^n; \\ &\mathcal{B}^{(p, q)}(x) = \sum_{n \geq 0} b_n^{(p, q)}x^n.
\end{align*}

Throughout, we adopt the conventions $f_0^{(p,q)} = a_0^{(p,q)} = b_0^{(p,q)} = 0$, since there is no TITO or affine permutation of period $0$, and $s_0^{(p,q)} = 1$, counting the empty permutation. In particular, $\mathcal{F}^{(p,q)}$, $\mathcal{A}^{(p,q)}$, and $\mathcal{B}^{(p,q)}$ have constant term $0$, while $\mathcal{S}^{(p,q)}$ has constant term $1$.

\subsection{Permutations}
The following lemma gives the enumeration for $(p, q)$-avoiding permutations in $S_n$ for $(p, q) \in S_3 \times S_3$ and $p \neq q$.

\begin{lem}[\cite{SS85}, Propositions $7$--$11$]\label{lem:pattern avoiding perm}
    For every $n \geq 1$, we have
        $$s_n^{(p,q)} = \begin{cases}
            2^{n-1} & \text{if }\{p, q\} \in \{\{312, 321\}, \{312, 231\}, \{312, 213\}, \{312, 132\}, \{123, 132\},\\
            & \qquad \quad \ \{123, 213\}, \{231, 321\}, \{132, 213\}, \{132, 231\}, \{213, 231\}\}; \\
            \binom{n}{2} + 1 & \text{if }\{p, q\} \in \{\{312, 123\}, \{132, 321\}, \{123, 231\}, \{213, 321\}\}; \\
            0 & \text{if }\{p, q\} = \{123, 321\} \text{ and } n \geq 5; \\
            1, 2, 4, 4 & \text{if } \{p, q\} = \{123, 321\} \text{ and }
              n = 1, 2, 3, 4, \text{ respectively.}
        \end{cases}$$
\end{lem}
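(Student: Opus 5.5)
This is a cited result, Lemma~\ref{lem:pattern avoiding perm} from \cite{SS85}, so in the paper it is simply invoked. If I were to prove it, I would go case by case through the unordered pairs, using the symmetries of the square (reverse, complement, inverse) to collapse them into a few classes. The convention here is that $\sigma$ contains $p$ when some positions $i_1<\dots<i_k$ satisfy $\sigma^{-1}$-order $i_{p_1}\prec\dots\prec i_{p_k}$. This is just the classical notion applied to $\sigma^{-1}$, so it suffices to count classically avoiding permutations; inversion is a bijection. Reverse, complement and inverse act on pairs, and I would check that the lists in the statement are unions of orbits.

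For the $2^{n-1}$ classes I would locate the position of $n$ (or of the entry $1$) and derive a recursion $s_n=2s_{n-1}$. For example, in the classical $\{132,231\}$-avoiding case, $n$ must sit at the first or last position, since anything on both sides of it creates one of the patterns. Removing $n$ then gives a bijection onto two copies of $S_{n-1}$-avoiders. Similarly, for $\{123,132\}$ the entry in position $1$ forces the structure: everything greater than the first entry must be decreasing and nothing smaller may lie below a larger pair. The standard argument shows $\sigma_1\in\{n,n-1\}$ with a doubling recursion, or equivalently that these permutations are reverse layered permutations, counted by compositions of $n$, which gives $2^{n-1}$. Each remaining $2^{n-1}$ class is handled the same way: $\{132,213\}$ are reverse layered permutations, and $\{231,321\}$ and $\{312,321\}$ are layered with layers of size at most $2$ reversed, still counted by compositions. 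Each case follows from a standard class by a symmetry.

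For $\{123,132\}$-type classes giving $\binom n2+1$, namely the class of $\{123,231\}$ up to symmetry, I would show the permutation is decreasing except for one "shift" determined by a choice of two parameters. Concretely, such a permutation is either the identity reversed or is obtained from $n(n-1)\cdots1$ by moving a consecutive decreasing block, parametrized by a pair $i<j$. That gives $\binom n2$ nonidentity-type choices plus one. Finally, $\{123,321\}$ follows from Erd\H{o}s--Szekeres, since any permutation of length $\ge5=(2)(2)+1$ has a monotone subsequence of length 3, so the count is $0$. The small values $1,2,4,4$ I would check by direct listing.

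The main obstacle is bookkeeping. Matching the paper's reversed containment convention to the classical pattern names is easy to get wrong, since under inversion $312\leftrightarrow231$. I would verify every class on $n=3$, where each nontrivial pair must give $4=2^2$ or $4=\binom32+1$, before trusting the orbit assignments.
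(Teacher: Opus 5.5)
Your plan is sound, and it does more than the paper, which gives no argument and simply cites Simion--Schmidt. The plan is to collapse the $15$ pairs under reverse, complement and inverse into the five orbits represented by $\{123,132\}$, $\{132,213\}$, $\{132,231\}$, $\{123,231\}$, $\{123,321\}$, and count one representative of each. Your arguments for $\{132,231\}$ ($n$ is first or last) and for $\{123,321\}$ (Erd\H{o}s--Szekeres plus listing) are complete. Your worry about conventions is moot: every list in the statement is closed under inversion (indeed under all eight symmetries), so the count does not depend on whether one reads $\sigma$ or $\sigma^{-1}$.

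The one case you only assert is $\{123,231\}$, and ``moving a consecutive decreasing block'' has to say where the block goes, or it overcounts. The same locate-$n$ device proves it. Suppose $\sigma$ avoids $123$ and $231$ and $\sigma_i=n$ with $i>1$. Then each entry before $n$ is smaller than each entry after it (else $231$), and the prefix is decreasing (else a $123$ ending at $n$). The suffix is also decreasing (else a prefix entry starts a $123$). So $\sigma=m(m-1)\cdots1\,n(n-1)\cdots(m+1)$ with $1\le m\le n-1$. If $\sigma_1=n$, the rest is an arbitrary avoider of length $n-1$. Hence $s_n=s_{n-1}+(n-1)$ with $s_1=1$, which gives $\binom n2+1$.

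Several side claims are wrong, though none is load-bearing once you rely on the symmetry reduction.

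For $\{123,132\}$, your conclusion $\sigma_1\in\{n,n-1\}$ is right, but the stated reason is not. Two larger entries after $\sigma_1$ in decreasing order already form a $132$, so the real constraint is that at most one entry exceeds $\sigma_1$. Moreover, these avoiders are not the reverse layered permutations: for $n=3$ they are $213,231,312,321$, while the reverse layered ones are $123,231,312,321$. Reverse layered permutations are exactly the $\{132,213\}$-avoiders, as you say next.

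The $\{312,321\}$- and $\{231,321\}$-avoiders are not layered permutations with layers of size at most $2$, in either orientation. That family is counted by Fibonacci numbers, and $2341$ avoids both $312$ and $321$; the $\{312,321\}$-avoiders are in fact direct sums of blocks $23\cdots k1$. Since both classes lie in the orbit of $\{123,132\}$, just drop these descriptions.

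Finally, your $n=3$ sanity check cannot detect a misassignment. Every pair other than $\{123,321\}$ excludes exactly two permutations of length $3$ and so gives $4$. The two families first separate at $n=4$, where they give $8$ and $7$.
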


Lemma \ref{lem:pattern avoiding perm} allows us to determine $\mathcal{S}^{(p, q)}(x)$ for all $(p, q) \in S_3 \times S_3$ and $p \neq q$. This will be useful later when we compute the generating functions $\mathcal{F}^{(p,q)}(x)$.

\begin{lem}\label{lem: generating function perm}
    For distinct $p,q \in S_3$, we have
    \begin{equation*}
        \mathcal{S}^{(p,q)}(x) = \begin{cases}
             \frac{1-x}{1-2x} &\text{if }s_{n}^{(p,q)} = 2^{n-1} \text{ for $n \geq 1$}; \\
            \frac{1-2x+2x^2}{(1-x)^3} &\text{if }s_{n}^{(p,q)} = \binom{n}{2} + 1\text{ for $n \geq 1$};\\
            1 + x + 2x^2 + 4x^3 + 4x^4 &\text{if } \{p,q\} = \{123, 321\}.
        \end{cases}
    \end{equation*}
\end{lem}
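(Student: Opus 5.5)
The plan is to compute each generating function directly from the closed forms in Lemma~\ref{lem:pattern avoiding perm}, using the convention $s_0^{(p,q)} = 1$ and summing the resulting series case by case.

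\emph{First case}, $s_n^{(p,q)} = 2^{n-1}$ for $n \ge 1$. We write
\[
\mathcal{S}^{(p,q)}(x) = 1 + \sum_{n\ge1} 2^{n-1}x^n = 1 + \frac{x}{1-2x} = \frac{1-x}{1-2x},
\]
where the middle equality is the geometric series $x\sum_{m\ge0}(2x)^m$.

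\emph{Second case}, $s_n^{(p,q)} = \binom{n}{2}+1$ for $n\ge1$. The formula $\binom{n}{2}+1$ also gives $1$ at $n=0$, so it agrees with the convention $s_0^{(p,q)}=1$. Hence we may sum over all $n \ge 0$:
\[
\sum_{n\ge0}\binom{n}{2}x^n + \sum_{n\ge0}x^n = \frac{x^2}{(1-x)^3} + \frac{1}{1-x}.
\]
Here the first series is the standard identity $\sum_n \binom{n}{k}x^n = x^k/(1-x)^{k+1}$ with $k=2$. Putting both terms over $(1-x)^3$ gives the numerator $x^2 + (1-x)^2 = 1-2x+2x^2$, as claimed.

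\emph{Third case}, $\{p,q\}=\{123,321\}$. Lemma~\ref{lem:pattern avoiding perm} lists the values $1,2,4,4$ for $n=1,2,3,4$ and gives $0$ for $n\ge5$. Adding $s_0^{(p,q)}=1$, the generating function is the polynomial $1+x+2x^2+4x^3+4x^4$.

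Nothing here is a real obstacle. The only point needing care is the constant term: in the second case the formula already yields $1$ at $n=0$, whereas in the first case $2^{n-1}$ at $n=0$ would give $\tfrac12$. That is why the first case adds the constant $1$ separately rather than extending the formula to $n=0$.
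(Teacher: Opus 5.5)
Your proposal is correct and is essentially the paper's approach: the paper gives no separate argument, and simply reads off these generating functions from the closed forms in Lemma~\ref{lem:pattern avoiding perm} under the convention $s_0^{(p,q)}=1$. Your case-by-case summation carries out exactly that computation, including the correct handling of the constant term in the first case.
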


It is also possible to compute $\mathcal{S}^{(231,p)}(x)$ for a length-4 pattern $p$ that itself avoids $231$. We include West's formulas \cite{W96} below for use in our calculations later.

\begin{lem}[\cite{W96}, Table 1]\label{lem: perm generating functions}
    For $p \in \{4321, 4123\}$, we have the following generating functions $\mathcal{S}^{(231, 4321)}(x)$ and $\mathcal{S}^{(231, 4123)}(x)$:
    \begin{align*}
        \mathcal{S}^{(231, 4321)}(x) &= \sum_{n \geq 0} s_n^{(231, 4321)} x^n = \frac{1-2x}{1-3x+x^2}, \\
        \mathcal{S}^{(231, 4123)}(x) &= \sum_{n \geq 0} s_n^{(231, 4123)} x^n = \frac{(1-x)^3}{1-4x+5x^2-3x^3}.
    \end{align*}
\end{lem}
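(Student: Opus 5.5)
The statement to prove is West's Lemma \ref{lem: perm generating functions}: the generating functions for $(231,4321)$- and $(231,4123)$-avoiding permutations. We are allowed to cite it, but a self-contained derivation is short. We use the standard first-return decomposition of $231$-avoiding permutations. If $\sigma \in S_n$ avoids $231$ and $n$ sits in position $k$, then every entry to the left of $n$ is smaller than every entry to its right. So
\[
\sigma = \alpha\, n\, \beta ,
\]
where $\alpha$ is a $231$-avoiding permutation of $\{1,\dots,k-1\}$ and $\beta$ is a $231$-avoiding permutation of $\{k,\dots,n-1\}$. This gives $C(x) = 1 + xC(x)^2$ for the Catalan generating function. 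The plan is to refine this decomposition by tracking how an occurrence of the length-$4$ pattern can straddle $n$.

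For $q = 4321$, any occurrence either lies entirely in $\alpha$ or in $\beta$, or uses $n$ as its ``4''. If it uses $n$, the ``321'' must lie to the right of $n$, hence in $\beta$. An occurrence cannot mix $\alpha$ with $\beta$ in a decreasing way, since entries of $\alpha$ are smaller than entries of $\beta$ and $\alpha$ comes first. Hence $\sigma$ avoids $4321$ if and only if $\alpha$ avoids $4321$ and $\beta$ avoids $321$. Let $D(x)=\mathcal{S}^{(231,321)}(x)=\frac{1-x}{1-2x}$, taken from Lemma \ref{lem: generating function perm}. Writing $G=\mathcal{S}^{(231,4321)}$, we get
\[
G = 1 + xGD, \qquad\text{so}\qquad G = \frac{1}{1-xD} = \frac{1-2x}{1-3x+x^2}.
\]

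For $q = 4123$, an occurrence using $n$ as its ``4'' needs a $123$ in $\beta$. An occurrence cannot start in $\alpha$ and continue into $\beta$ with a descent, because the ``4'' would have to be larger than later entries. Thus $\sigma$ avoids $4123$ if and only if $\alpha$ avoids $4123$ and $\beta$ avoids $123$. Writing $H=\mathcal{S}^{(231,4123)}$ and $E=\mathcal{S}^{(231,123)}=\frac{1-2x+2x^2}{(1-x)^3}$, we get $H = 1/(1-xE)$. Expanding,
\[
(1-x)^3 - x(1-2x+2x^2) = 1-4x+5x^2-3x^3,
\]
which gives $H = (1-x)^3/(1-4x+5x^2-3x^3)$ as claimed.

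The only real obstacle is justifying that the straddling cases are exactly as described. This requires checking each way the four pattern positions can split among $\alpha$, the entry $n$, and $\beta$, using that every entry of $\alpha$ is below every entry of $\beta$. That check is routine, and after it both identities follow by algebra.
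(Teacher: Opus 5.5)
Your derivation is correct. It differs from the paper only in that the paper gives no proof: it imports both generating functions from West's table.

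Your decomposition $\sigma=\alpha\,n\,\beta$ at the maximum entry is exactly the content of the paper's own Lemma~\ref{lem: count permutations n---}. That lemma is stated for any $p\in S_k$ with $p_1=k$, and both $4321$ and $4123$ qualify. In generating-function form it reads $\mathcal{S}^{(231,p)}(x)=1+x\,\mathcal{S}^{(231,p)}(x)\,\mathcal{S}^{(231,p_{[2:k]})}(x)$, that is, $\mathcal{S}^{(231,p)}=1/\bigl(1-x\,\mathcal{S}^{(231,p_{[2:k]})}\bigr)$. Substituting the two relevant entries of Lemma~\ref{lem: generating function perm} reproduces your computations, and the algebra checks out: $1-x\cdot\frac{1-x}{1-2x}=\frac{1-3x+x^2}{1-2x}$ and $(1-x)^3-x(1-2x+2x^2)=1-4x+5x^2-3x^3$.

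The ``routine check'' of straddling occurrences that you defer is therefore already carried out in that lemma's proof. The uniform reason is simply that the pattern begins with its largest entry. An occurrence avoiding $n$ cannot have its first entry in $\alpha$ and a later entry in $\beta$. An occurrence using $n$ must use it as that first entry, leaving $p_{[2:k]}$ inside $\beta$. Your phrase about continuing ``with a descent'' in the $4123$ case is a slightly roundabout way of saying this.

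Your route makes the paper self-contained: the citation could be replaced by a two-line computation from results the paper already proves. The same identity also explains Lemma~\ref{lem: same count perm} at the level of generating functions. The citation is shorter and points to a source covering many more pattern pairs, but it asks the reader to trust a table for a fact the paper's own machinery yields immediately.
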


It is also helpful to know the number of permutations that avoid patterns $p = p_1\cdots p_k$ of length $k$ such that $p_1 = k$. Given $p \in S_n$, we use $p_{[i:j]}$ to denote $p_i p_{i+1} \cdots p_j$, where $1\leq i \leq j \leq n$.\footnote{Whenever a subword is used as a forbidden pattern, we mean its standardization, obtained by replacing its entries by their ranks. For example, if $p=1432$, then avoiding $p_{[2:4]}=432$ means avoiding $321$.}

\begin{lem}\label{lem: count permutations n---}
    Let $p \in S_k$ such that $p_1 = k$. The number of $(231, p)$-avoiding permutations in $S_n$ is given by 
    $$s_n^{(231, p)} = \sum_{i = 1}^n s_{i-1}^{(231, p)}s_{n-i}^{(231, p_{[2:k]})}.$$
\end{lem}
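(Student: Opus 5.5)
The plan is the standard decomposition of a $231$-avoiding permutation by the position of its largest entry. Let $\sigma \in S_n$ avoid $231$, and suppose $\sigma_i = n$. Because $\sigma$ avoids $231$, every entry to the left of $n$ is smaller than every entry to the right of $n$. Otherwise we would have some $a$ before $n$ and some $b$ after $n$ with $a > b$, and then $a, n, b$ would form a $231$ pattern.

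It follows that $\sigma = \alpha\, n\, \beta$, where:
\begin{itemize}
\item $\alpha$ is a $231$-avoiding arrangement of $\{1,\dots,i-1\}$;
\item $\beta$ is a $231$-avoiding arrangement of $\{i,\dots,n-1\}$.
\end{itemize}
Conversely, any such concatenation avoids $231$. Any occurrence would have to involve entries from both parts or the entry $n$, and every such configuration would require a small entry after a large one across the split, which cannot happen. This is the usual Catalan recursion, and what remains is to track the extra condition of avoiding $p$.

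Next I will determine which pattern conditions $\alpha$ and $\beta$ must satisfy, using $p_1 = k$.

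\textbf{Condition on $\beta$.} If $\beta$ contains $p_{[2:k]}$, then prepending $n$, which is larger than everything in $\beta$ and sits to its left, produces a copy of $p$. So $\beta$ must avoid $p_{[2:k]}$, and of course it must also avoid $p$.

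\textbf{Condition on $\alpha$.} Clearly $\alpha$ must avoid $p$.

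\textbf{Sufficiency.} Suppose $\alpha$ avoids $p$ and $\beta$ avoids $p_{[2:k]}$; I need that $\sigma$ avoids $p$. Take any occurrence of $p$ in $\sigma$ and look at the entry playing the role of $p_1 = k$. It is the largest entry of the occurrence and comes first, so all other entries of the occurrence lie to its right and are smaller than it.
\begin{itemize}
\item If that first entry lies in $\alpha$, then every later entry of the occurrence must also lie in $\alpha$: entries of $n\beta$ are all larger than entries of $\alpha$, so none of them can serve as a smaller entry. The whole occurrence then sits in $\alpha$, a contradiction.
\item If that first entry is $n$, then the remaining entries form $p_{[2:k]}$ inside $\beta$, a contradiction.
\item If that first entry lies in $\beta$, then the whole occurrence lies in $\beta$. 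This forces a copy of $p$ in $\beta$, and hence a copy of $p_{[2:k]}$ (drop the first entry), again a contradiction.
\end{itemize}
Note that avoiding $p_{[2:k]}$ implies avoiding $p$, so the condition on $\beta$ is exactly "avoids $231$ and $p_{[2:k]}$".

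\textbf{Counting.} Summing over $i$, the left factor contributes $s_{i-1}^{(231,p)}$ and the right factor contributes $s_{n-i}^{(231,p_{[2:k]})}$, which gives the stated formula.

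The only delicate point is the case analysis in the sufficiency step, namely where the maximal entry of a putative $p$-occurrence can sit. The condition $p_1 = k$ is exactly what makes that case analysis clean.
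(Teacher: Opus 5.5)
Your proposal is correct and follows essentially the same argument as the paper: split $\sigma$ at the position of $n$, use $231$-avoidance to force every entry on the left to be smaller than every entry on the right, and use $p_1 = k$ to confine any $p$-occurrence to one side or to one that starts at $n$. You are slightly more explicit than the paper in checking that every such concatenation is again $231$-avoiding, which the counting bijection needs.
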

\begin{proof}
    Let $u$ be a $(231, p)$-avoiding permutation in $S_n$, and let $i$ be the index such that $u_i = n$. Since $u$ avoids $231$, every entry to the left of position $i$ is smaller than every entry to its right: if $a < i < b$ with $u_a > u_b$, then $u_a, n, u_b$ would be an occurrence of $231$. Hence $u_{[1: i-1]}$ is a permutation of $\{1, \dots, i-1\}$ and $u_{[i+1: n]}$ is (as a pattern) a permutation in $S_{n-i}$. Consequently, any occurrence of $p$ in $u$ that does not use position $i$ lies entirely on one side of position $i$, since its first entry is its largest ($p_1 = k$) while entries on the left are smaller than those on the right. Moreover, any occurrence of $p$ that uses position $i$ must use it as its first entry, so the remaining entries form an occurrence of $p_{[2:k]}$ to the right of position $i$. Therefore $u$ avoids $(231, p)$ if and only if $u_{[1: i-1]}$ avoids $(231, p)$ and $u_{[i+1: n]}$ avoids $(231, p_{[2:k]})$. Summing over $i$ gives the formula.
\end{proof}

The following statement is an immediate consequence of Lemma \ref{lem: count permutations n---}.

\begin{lem}\label{lem: same count perm}
    Let $p, p' \in S_k$ be such that $p_1 = p'_1 = k$. If $s_m^{(231, p_{[2:k]})} = s_m^{(231, p'_{[2:k]})}$ for all $m < n$, then $s_n^{(231, p)} = s_n^{(231, p')}$.
\end{lem}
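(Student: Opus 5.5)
We will prove the statement by strong induction on $n$, feeding the hypothesis into the recursion of Lemma \ref{lem: count permutations n---}; the key observation is that the second factor in that recursion only involves $p_{[2:k]}$. Apply Lemma \ref{lem: count permutations n---} to both $p$ and $p'$, which is legitimate since $p_1 = p'_1 = k$:
$$s_n^{(231,p)} = \sum_{i=1}^n s_{i-1}^{(231,p)}\, s_{n-i}^{(231,p_{[2:k]})}, \qquad s_n^{(231,p')} = \sum_{i=1}^n s_{i-1}^{(231,p')}\, s_{n-i}^{(231,p'_{[2:k]})}.$$
For each $i$, we have $n-i < n$, so the hypothesis directly gives $s_{n-i}^{(231,p_{[2:k]})} = s_{n-i}^{(231,p'_{[2:k]})}$. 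It then remains to show that $s_{i-1}^{(231,p)} = s_{i-1}^{(231,p')}$ for all $i-1 < n$.

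For this we induct on $m$, proving that $s_m^{(231,p)} = s_m^{(231,p')}$ for every $m \le n$. The base case is $m=0$, where both counts equal $1$ because the empty permutation is counted. For the inductive step at $m \le n$, the recursion expresses $s_m$ through $s_{j}^{(231,p)}$ with $j<m$, which are equal by induction, and through $s_{m-i}^{(231,p_{[2:k]})}$ with $m-i<m\le n$, which are equal by the hypothesis. Taking $m=n$ yields the claim.

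The only point requiring care is to note that the hypothesis for indices below $n$ automatically covers all smaller $m$. Hence the induction closes without needing equality of the $p_{[2:k]}$-counts at index $n$ itself. There is no real obstacle beyond this.
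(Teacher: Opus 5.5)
Your proof is correct and matches the paper's approach. The paper records this lemma as an immediate consequence of the recursion in Lemma~\ref{lem: count permutations n---}, and your strong induction on $m \le n$ spells out that remark. You rightly treat $m=0$ as a separate base case, since the recursion only holds for $m \ge 1$, and you rightly note that every index $m-i \le n-1$ falls under the hypothesis.
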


\subsection{Affine permutations}\label{sec: affine permutations}

Affine permutations are a generalization of permutations. \allowbreak Barkley and Defant \cite{BD25+} showed that affine permutations are in bijection with TITOs consisting of exactly one waxing block. By Lemma \ref{lem:312 blocks}, any $312$-avoiding TITO has one of three possible block structures; one such structure consists of exactly one waxing block. The $(312, p)$-avoiding TITOs with this block structure can therefore be enumerated by counting $(312, p)$-avoiding affine permutations. This subsection provides the necessary definitions and lemmas to support our calculation.

\begin{defi}
    An \dfn{affine permutation} of size $n$ is a bijection $\omega: \Z \to \Z$ such that 
    \begin{enumerate}
        \item $\omega(i+n) = \omega(i) + n$ for all $i \in \Z$, and 
        \item $\sum_{i = 1}^n \omega(i) = \binom{n+1}{2}$.
    \end{enumerate}

    The \dfn{affine symmetric group} $\tilde{S}_n$ is the group of all affine permutations of a fixed size $n$. We abbreviate $\omega(i)$ as $\omega_i$. 
\end{defi}

We can think of $[\omega_1, \dots, \omega_n]$ as the \dfn{window notation} of an affine permutation $\omega$, and we refer to it as the \dfn{base window} of $\omega$. We now introduce ideas from \cite{C10} that will help count pattern-avoiding affine permutations in $\tilde{S}_n$ by transforming their base windows into permutations in $S_n$.

We define a map $\sigma_r : \tilde{S}_n \to \tilde{S}_n$ by setting 
\begin{align*}
    \sigma_r(\omega)_i = \begin{cases}
        \omega_{i-1} + 1 & \text{if } 2 \leq i \leq n, \\
        \omega_n - n + 1, &\text{if }i = 1.
    \end{cases}
\end{align*}

The map $\sigma_r$ acts on the window notation of $\omega$ by cyclically shifting each entry one position to the right: the entry occupying position $i$ moves to position $i+1$ (indices taken mod $n$), so the last entry of the window wraps around into the first position. To keep the result a valid affine permutation, this wrapped entry is also decreased by $n$, and every entry of the window (including the wrapped one) is increased by $1$. Concretely, if $\omega$ has window $[\omega_1, \dots, \omega_n]$, then $\sigma_r(\omega)$ has window $[\omega_n - n + 1,\ \omega_1 + 1,\ \dots,\ \omega_{n-1}+1]$.\footnote{The map $\sigma_r$ is an automorphism of $\tilde{S}_n$ of order $n$ obtained by rotating the Coxeter graph of $\tilde{S}_n$ one space clockwise.} Set $\sigma_l = \sigma_r^{-1}$, which has the effect of shifting the window one position to the left, with the analogous adjustment applied to the entry that wraps from the first to the last position.

\begin{example}
    If $\omega = [5, -4, 6, 3]$, then $\sigma_r(\omega) = [0, 6, -3,7]$ and $\sigma_l(\omega) = [-5, 5, 2, 8]$. 
\end{example}

As in the case of TITOs, we say an affine permutation $\omega \in \tilde{S}_n$ \dfn{avoids} the pattern $p \in S_k$ if there is no subsequence of integers $i_1 < \cdots < i_k$ such that the subword $\omega_{i_1}\cdots\omega_{i_k}$ has the same relative order as the elements of $p$. The following lemma shows that $\sigma_r$ and $\sigma_l$ preserve relative ordering and will help us enumerate pattern-avoiding affine permutations.

\begin{lem}[\cite{C10}, Lemmas $8$--$9$]\label{lem:inversion}
    Let $\omega \in \tilde{S}_n$ and $p \in S_m$. The following are equivalent: 
    \begin{enumerate}
        \item $\omega$ avoids $p$;
        \item $\omega^{-1}$ avoids $p^{-1}$;
        \item $\sigma_r(\omega)$ avoids $p$;
        \item $\sigma_l(\omega)$ avoids $p$.
    \end{enumerate}
\end{lem}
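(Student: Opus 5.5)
The plan is to view an affine permutation $\omega$ through its graph $\Gamma(\omega)=\{(i,\omega_i): i\in\Z\}\subseteq\Z^2$. An occurrence of $p\in S_m$ in $\omega$ is then a set of $m$ points of $\Gamma(\omega)$ with these properties: listed by increasing first coordinate, their second coordinates have the same relative order as $p_1,\dots,p_m$. Equivalently, the $m$ points have the relative position pattern of the permutation matrix of $p$. Each equivalence then reduces to exhibiting a bijection of $\Z^2$ that maps $\Gamma(\omega)$ onto the graph of the other affine permutation and transforms relative position patterns in a controlled way.

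For (1)$\Leftrightarrow$(2), the graph $\Gamma(\omega^{-1})$ is the image of $\Gamma(\omega)$ under the coordinate swap $(x,y)\mapsto(y,x)$. Suppose the points $(i_1,\omega_{i_1}),\dots,(i_m,\omega_{i_m})$ with $i_1<\cdots<i_m$ form a $p$-occurrence, so $\omega_{i_j}$ has rank $p_j$ among them. Reorder the swapped points by increasing first coordinate $\omega_{i_j}$. The point in position $p_j$ then has second coordinate $i_j$, whose rank is $j$. Thus the entry in position $r$ has rank $p^{-1}(r)$, giving an occurrence of $p^{-1}$ in $\omega^{-1}$. Applying the same argument to $\omega^{-1}$ and $p^{-1}$ gives the converse, since $(\omega^{-1})^{-1}=\omega$ and $(p^{-1})^{-1}=p$. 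We also use that $\omega^{-1}\in\tilde S_n$, since $\tilde S_n$ is a group.

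For (1)$\Leftrightarrow$(3), the key step is to show that the defining formula of $\sigma_r$ extends to all integers: $\sigma_r(\omega)_i=\omega_{i-1}+1$ for every $i\in\Z$. For $2\le i\le n$ this is the definition. For $i=1$ we have $\omega_n-n+1=\omega_0+1$ by periodicity of $\omega$. For arbitrary $i$, both sides satisfy $f(i+n)=f(i)+n$: the left side because $\sigma_r(\omega)\in\tilde S_n$, and the right side because $\omega$ does. Hence they agree everywhere. Consequently $\Gamma(\sigma_r(\omega))=\Gamma(\omega)+(1,1)$. Translation by $(1,1)$ preserves the order of first coordinates and the order of second coordinates. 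Therefore indices $i_1<\cdots<i_m$ give a $p$-occurrence in $\omega$ exactly when $i_1+1<\cdots<i_m+1$ give a $p$-occurrence in $\sigma_r(\omega)$.

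For (1)$\Leftrightarrow$(4), apply (1)$\Leftrightarrow$(3) to $\sigma_l(\omega)$. Since $\sigma_r(\sigma_l(\omega))=\omega$, the affine permutation $\sigma_l(\omega)$ avoids $p$ iff $\omega$ does. The only point needing care is the extension of the formula for $\sigma_r$ to all of $\Z$, i.e. checking the wrapped entry. This is the step I expect to be the (mild) main obstacle; everything else is bookkeeping of relative orders.
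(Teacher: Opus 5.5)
Your proof is correct. The paper gives no argument of its own for this lemma and simply cites Crites. Your proof is the standard diagrammatic argument behind that citation, consistent with the paper's remark that $\sigma_r,\sigma_l$ preserve relative ordering:
\begin{itemize}
\item reflecting the graph across the diagonal handles inverses;
\item checking the wrapped entry via $\omega_0=\omega_n-n$ gives $\sigma_r(\omega)_i=\omega_{i-1}+1$ for all $i\in\Z$, so $\sigma_r$ translates the graph by $(1,1)$;
\item $\sigma_l=\sigma_r^{-1}$ then gives the last equivalence.
\end{itemize}
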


The lemma below allows us to shift the base window of any $231$-avoiding affine permutation in $\tilde{S}_n$ to a $231$-avoiding permutation in $S_n$ (i.e., all elements in the base window are in $[n]$), which is very helpful for enumeration. Note that $S_n \subset \tilde{S}_n$ by identifying a permutation with the affine permutation having it as base window.

\begin{lem}[\cite{C10}, Proof of Theorem 7]\label{lem:231 avoiding in S_n}
    Let $\omega \in \tilde{S}_n$ be $231$-avoiding, and let $\alpha$ be the index such that $$\omega_{\alpha} = \max\{\omega_1, \dots, \omega_n\}.$$ Then $u = \sigma_l^{\omega_\alpha - n}(\omega) \in S_n \subset \tilde{S}_n$ and $u_{\alpha - \omega_{\alpha} + n} = n$.
\end{lem}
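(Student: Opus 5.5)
The plan is to prove the statement by induction on $m = \omega_\alpha - n$. Two facts do the work: a sum/residue argument pins down the base window, and $231$-avoidance controls what $\sigma_l$ does to the maximum.

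First I record the sum/residue fact. The entries $\omega_1,\dots,\omega_n$ are pairwise incongruent mod $n$, since $\omega$ is a bijection with $\omega(i+n)=\omega(i)+n$, and they sum to $\binom{n+1}{2}$.
\begin{itemize}
\item If all of them are positive, then replacing each by the least positive member of its residue class lowers the sum unless the entry already equals that representative. The minimum possible sum is $1+\cdots+n=\binom{n+1}{2}$, so equality forces $\{\omega_1,\dots,\omega_n\}=[n]$.
\item Hence if $M:=\omega_\alpha>n$, some window entry must be $\le 0$.
\item Also $M\ge n$ always holds, because $n$ distinct residues have some representative $\ge n$ among $\{\omega_1,\dots,\omega_n\}$ once the sum is fixed. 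More simply, if $M<n$ then all entries are $<n$, and the sum is at most $\binom{n+1}{2}-n$ by the same representative argument.
\end{itemize}
Thus $m\ge 0$. If $m=0$, the window is exactly $[n]$, so $u=\omega\in S_n$ and $u_\alpha=n$, which is the base case.

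For the inductive step, suppose $M>n$. I claim $\alpha\ge 2$. If instead $\alpha=1$, pick $b\in\{2,\dots,n\}$ with $\omega_b\le 0$, which exists by the first step. Consider positions $1-n<1<b$. Their values are $M-n\ge 1$, then $M$, then $\omega_b\le 0$, and $\omega_b<M-n<M$. This is an occurrence of $231$, a contradiction.

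With $\alpha\ge 2$, I apply the explicit formula for $\sigma_l$, the inverse of $\sigma_r$: $\sigma_l(\omega)$ has window
\[
[\omega_2-1,\dots,\omega_n-1,\ \omega_1+n-1].
\]
The entry $\omega_1+n-1$ is at most $M-1$, because $\omega_1\le M-n$: it is not the maximum and has a different residue from $M$. So the maximum of $\sigma_l(\omega)$ is $M-1$, attained at index $\alpha-1$. By Lemma \ref{lem:inversion}, $\sigma_l(\omega)$ is again $231$-avoiding. Induction then gives that $\sigma_l^{m}(\omega)\in S_n$ with $n$ at position $\alpha-m=\alpha-\omega_\alpha+n$, as claimed.

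I expect the main obstacle to be ensuring that the maximum never sits in position $1$ when $M>n$. A wrap-around would push the entry $\omega_1+n-1\ge n$ above the others and break the bookkeeping. The $231$ occurrence $(1-n,\,1,\,b)$ handles this, and I would check carefully that the claim $M-n\ge 1>\omega_b$ genuinely uses $M>n$. The remaining steps are routine verifications of the $\sigma_l$ formula and of the residue/sum inequality.
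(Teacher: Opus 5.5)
Your induction on $m=\omega_\alpha-n$ is a sound plan, but the inductive step has a genuine gap at the sentence asserting $\omega_1\le M-n$. The reasons you give there are that $\omega_1$ is not the maximum and lies in a different residue class from $M$. These only yield $\omega_1\le M-1$ and $\omega_1\neq M-n$; they do not yield $\omega_1\le M-n$. For example, $[3,4,-1]\in\tilde{S}_3$ has $M=4$, $\alpha=2$, and $\omega_1=3>M-n=1$. Indeed $\sigma_l([3,4,-1])=[3,-2,5]$ has its maximum in the wrapped position.

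This step carries real weight. If $\omega_1+n>M$, the maximum of $\sigma_l(\omega)$ is the wrapped entry $\omega_1+n-1$ at position $n$, not $M-1$ at position $\alpha-1$. The inductive hypothesis applied to $\sigma_l(\omega)$ would then speak about $\sigma_l^{\omega_1}(\omega)$ rather than $\sigma_l^{m}(\omega)$. So excluding $\alpha=1$ is not the only obstacle: the example above has $\alpha\ge 2$ and is ruled out only because it contains $231$.

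The repair is one more $231$ occurrence, and it also subsumes your $\alpha\ge 2$ argument. Suppose $M>n$ and $\omega_{n+1}=\omega_1+n>M$; equality is impossible by injectivity, since $n+1\neq\alpha$. Then $\omega_1>M-n\ge 1$, so the nonpositive window entry you produced sits at some $b\in\{2,\dots,n\}$. The positions $\alpha<n+1<n+b$ carry the values $M$, $\omega_1+n$, $\omega_b+n$. Since $\omega_b+n\le n<M<\omega_1+n$, this is an occurrence of $231$.

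Hence $\omega_1+n<M$. This forces $\alpha\neq 1$ (otherwise $\omega_1+n=M+n$) and makes the wrapped entry $\omega_1+n-1$ smaller than $M-1$. After that, your induction goes through as written.

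A smaller omission: in the base case you assert that $M=n$ forces the window to be $[n]$. This needs the upper-bound version of your representative argument: if every entry is at most $n$, the sum is at most $\binom{n+1}{2}$, with equality only when the window is $[n]$. This is routine.
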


Let $\beta = \alpha - \omega_{\alpha} + n$. Note that the possible values of $\beta$ are $1$ through $\alpha$, and the possible values of $\alpha$ are $1$ through $n$. 

\begin{example}
    Let $\omega = [0,2,3,5]$. Then $\alpha = 4$, $\omega_{\alpha} = 5$, and $n = 4$. Thus, $u = \sigma_l(\omega) = [1, 2, 4, 3]$, and the index of $4$ in $u$ is $u^{-1}(n) = \alpha - \omega_{\alpha} + n = 4 - 5 + 4 = 3$.
\end{example}

Now, we introduce some lemmas that classify affine permutations that simultaneously avoid $231$ and another pattern $p$, which will be helpful for our enumeration. 

\begin{lem}\label{lem:avoiding for n---}
    Let $p \in S_k$ with $p_1 = k$. An affine permutation with window $u \in S_n$ such that $u_i = n$ is $(231, p)$-avoiding if and only if $u_{[1: i-1]} \in S_{i-1}$ is $(231, p)$-avoiding and $u_{[i+1: n]}$ is $(231, p_{[2: k]})$-avoiding. 
\end{lem}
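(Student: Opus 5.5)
The plan is to reduce the affine statement to the finite one proved in Lemma~\ref{lem: count permutations n---}. If $\omega$ has base window $u \in S_n$, then $\omega_{j+mn} = u_j + mn$ for all $j \in [n]$ and $m \in \Z$. So the positions in the $m$-th translate $\{mn+1, \dots, mn+n\}$ take exactly the values $\{mn+1,\dots,mn+n\}$. In other words, $\omega$ is an infinite direct sum $\cdots \oplus u \oplus u \oplus u \oplus \cdots$ of copies of $u$. Whenever $j < j'$ lie in different translates, $\omega_j < \omega_{j'}$.

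The first and main step is to show that every occurrence in $\omega$ of $231$, or of a pattern $p \in S_k$ with $p_1 = k$, lies entirely inside a single translate.
\begin{itemize}
\item For $p$ with $p_1 = k$, suppose the occurrence sits at positions $i_1 < \cdots < i_k$. The entry at $i_1$ is the largest value of the occurrence. If some $i_j$ lay in a later translate than $i_1$, we would get $\omega_{i_j} > \omega_{i_1}$, a contradiction. Since $i_1$ is the smallest position, no $i_j$ lies in an earlier translate either, so all positions are in the translate of $i_1$.
\item For $231$ at positions $a<b<c$, we have $\omega_c < \omega_b$ with $b<c$, so $b$ and $c$ lie in the same translate. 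Likewise $\omega_a > \omega_c$ with $a<c$, so $a$ and $c$ lie in the same translate.
\end{itemize}
In both cases the argument is the same: the pattern is sum-indecomposable, so an occurrence cannot straddle translates.

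Since every translate is an order-isomorphic copy of $u$, it follows that $\omega$ avoids $231$ if and only if $u$ does, and that $\omega$ avoids $p$ if and only if $u$ does. Hence $\omega$ is $(231,p)$-avoiding exactly when the ordinary permutation $u \in S_n$ is $(231,p)$-avoiding.

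The second step invokes the argument in the proof of Lemma~\ref{lem: count permutations n---}, with $u_i = n$.
\begin{itemize}
\item Since $u$ avoids $231$, all entries left of position $i$ are smaller than all entries to its right.
\item Any occurrence of $p$ avoiding position $i$ therefore lies entirely on one side of $i$.
\item Any occurrence using position $i$ must use it as its first entry, which yields an occurrence of $p_{[2:k]}$ in $u_{[i+1:n]}$.
\end{itemize}
Together these show that $u$ is $(231,p)$-avoiding if and only if $u_{[1:i-1]} \in S_{i-1}$ is $(231,p)$-avoiding and $u_{[i+1:n]}$ is $(231,p_{[2:k]})$-avoiding. For the converse direction, the same block structure shows that no $231$ can straddle position $i$: a $231$ using $n$ must have $n$ as its middle entry, which forces a left entry larger than a right entry, and that is impossible once the left part consists of the small values. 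Hence the conditions on the two sides suffice.

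Combining the two steps gives the lemma. The only point needing care is the first step, namely checking that the direct-sum structure of $\omega$ prevents an occurrence from straddling translates. Everything after that is the finite argument already given.
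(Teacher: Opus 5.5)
Your proposal is correct and takes the same route as the paper: it reduces from the affine permutation to the finite window $u$, then applies the argument of Lemma~\ref{lem: count permutations n---}. The one difference is that you justify the reduction. You observe that $231$ and any $p$ with $p_1=k$ are sum-indecomposable, so no occurrence can straddle translates; the paper only asserts that $\omega$ avoids $p$ if and only if $u$ does, which is false for general $p$ (e.g.\ $p=12$ with $u=21$).
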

\begin{proof}   
    First, note that an affine permutation with base window represented by $u \in S_n$ is $p$-avoiding if and only if the permutation $u$ is $p$-avoiding. The problem therefore reduces to proving the statement for the permutation $u$, which is addressed in Lemma \ref{lem: count permutations n---}.
\end{proof}

We now characterize the affine permutations avoiding $231$ and patterns $p \in S_k$ with $p_1 = 1$ and $p_2 = k$.

\begin{lem}\label{lem:avoiding for 1n--}
    Let $p \in S_k$ with $p_1 = 1$ and $p_2 = k$. An affine permutation with window $u \in S_n$ such that $u_i = n$ is $(231, p)$-avoiding if and only if $u_{[1: i-1]} \in S_{i-1}$ is $(231, p_{[2:k]})$-avoiding and $u_{[i+1: n]}$ is $(231, p_{[3:k]})$-avoiding. 
\end{lem}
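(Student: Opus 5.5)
The plan is to reduce the statement to a statement about the finite permutation $u$, as in Lemma \ref{lem:avoiding for n---}. The difference is that the pattern $p$ now begins with its smallest entry. That entry can always be supplied "for free" from an earlier translate of the window, so $p$ will effectively be replaced by $p_{[2:k]}$. Then I would invoke the decomposition argument from Lemma \ref{lem: count permutations n---}.

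\textbf{Step 1 (structure of $\omega$).} Let $\omega$ be the affine permutation with base window $u\in S_n$. Then $\omega(i+jn)=u_i+jn$, so the integers split into consecutive windows $W_j=\{jn+1,\dots,jn+n\}$. Positions in $W_j$ precede positions in $W_{j'}$ for $j<j'$, and all values on $W_j$ are smaller than all values on $W_{j'}$. Consequently, any occurrence of a pattern $q$ in $\omega$ meets windows $W_{j_1},\dots,W_{j_m}$ with $j_1<\dots<j_m$. This decomposes $q$ as a direct sum $q=\tau_1\oplus\cdots\oplus\tau_m$, where each $\tau_r$ occurs in $u$. Conversely, any such decomposition with every $\tau_r$ contained in $u$ yields an occurrence in $\omega$, obtained by placing the $\tau_r$ in distinct windows.

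\textbf{Step 2 (apply to $231$ and $p$).} The pattern $231$ is indecomposable as a direct sum: no proper prefix has all its values below the remaining ones. Hence $\omega$ avoids $231$ if and only if $u$ does.

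For $p$, the hypotheses $p_1=1$ and $p_2=k$ give $p=1\oplus p'$, where $p'$ is the standardization of $p_{[2:k]}$. Since $p'$ starts with its maximum, it is indecomposable. So the only direct-sum decompositions of $p$ are $p$ itself and $1\oplus p'$. If $u$ contains $p'$, then $\omega$ contains $p$: take any entry of an earlier window as the $1$. Conversely, if $u$ contains $p$, then it contains $p'$. Therefore $\omega$ avoids $p$ if and only if $u$ avoids $p_{[2:k]}$. Combining the two facts, $\omega$ is $(231,p)$-avoiding exactly when $u$ is $(231,p_{[2:k]})$-avoiding.

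\textbf{Step 3 (split at $n$).} Now $p_{[2:k]}$ is a pattern of length $k-1$ whose first entry is its maximum. The proof of Lemma \ref{lem: count permutations n---} applies to it directly. In a $231$-avoiding $u$ with $u_i=n$, every entry before position $i$ is smaller than every entry after it. An occurrence of $p_{[2:k]}$ therefore either lies entirely on one side of position $i$, or uses $n$ as its first entry together with an occurrence of $p_{[3:k]}$ to the right. Hence $u$ avoids $(231,p_{[2:k]})$ if and only if $u_{[1:i-1]}$ avoids $(231,p_{[2:k]})$ and $u_{[i+1:n]}$ avoids $(231,p_{[3:k]})$, which is the claim.

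The main care point is Step 1: one must check that the direct-sum reduction for occurrences spread across several windows is exhaustive. This uses the block-increasing structure of $\omega$ in both positions and values, together with the indecomposability of $231$ and of $p'$.
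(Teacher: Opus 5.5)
Your proof is correct and takes the same route as the paper. The paper uses $u_{a-n} < u_b < u_{a+n}$ to reduce $p$-avoidance of the affine permutation to $p_{[2:k]}$-avoidance of $u$, and then applies Lemma~\ref{lem: count permutations n---} to $p_{[2:k]}$, just as your Steps 2 and 3 do. Your direct-sum decomposition simply makes that reduction explicit, along with the fact that $231$-avoidance transfers between $\omega$ and $u$, which the paper leaves implicit.
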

\begin{proof}    
    Since $u \in S_n \subset \tilde{S}_n$, we know $u_a - n = u_{a-n} < u_b < u_{a+n} = u_a + n$ for all $1 \leq a, b \leq n$. Therefore, the affine permutation is $p$-avoiding if and only if $u$ is $p_{[2:k]}$-avoiding. By Lemma \ref{lem: count permutations n---}, we know $u$ is $(231, p_{[2:k]})$-avoiding if and only if $u_{[1: i-1]} \in S_{i-1}$ is $(231, p_{[2:k]})$-avoiding and $u_{[i+1: n]}$ is $(231, p_{[3:k]})$-avoiding. 
\end{proof}

Analogously, we can characterize affine permutations avoiding $231$ and patterns $p = p_1 \cdots p_k$ with $p_1 = k-1$ and $p_k = k$. The proof is similar to that of Lemma \ref{lem:avoiding for 1n--} and is therefore omitted.
\begin{lem}\label{lem:avoiding for ---n}
    Let $p \in S_k$ with $p_1 = k-1$ and $p_k = k$. An affine permutation with window $u \in S_n$ such that $u_i = n$ is $(231, p)$-avoiding if and only if $u_{[1: i-1]} \in S_{i-1}$ is $(231, p_{[1:k-1]})$-avoiding and $u_{[i+1: n]}$ is $(231, p_{[2:k-1]})$-avoiding. 
\end{lem}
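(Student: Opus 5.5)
We argue as in Lemma \ref{lem:avoiding for 1n--}: first reduce avoidance of $p$ by the affine permutation to avoidance of a shorter pattern by the finite word $u$, then apply the splitting at the maximum from Lemma \ref{lem: count permutations n---}.

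Since the window is $u\in S_n$, every entry satisfies $\omega_{a+mn}=u_a+mn$. Hence for $1\le a,b\le n$ we have $u_b < u_a+n = \omega_{a+n}$. Moreover, any entry in a later window exceeds every entry of $u$, and any entry in an earlier window is smaller than every entry of $u$.

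The first step is to show that $\omega$ avoids $p$ if and only if $u$ avoids both $p$ and $p_{[1:k-1]}$; in fact, avoiding $p_{[1:k-1]}$ suffices, since it implies avoiding $p$. For one direction, suppose $u$ contains an occurrence of $p_{[1:k-1]}$ at positions $i_1<\cdots<i_{k-1}$ in $[1,n]$. Append the position $i_1+n$ (or any position in the next window). Its value exceeds all entries of $u$, so it plays the role of $p_k=k$, and we obtain an occurrence of $p$ in $\omega$.

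For the converse, suppose $\omega$ contains $p$ at positions $j_1<\cdots<j_k$. The key point is that we may translate so that $j_1\in[1,n]$. Because $p_1=k-1$, every $\omega_{j_t}$ with $t<k$ other than the first is smaller than $\omega_{j_1}$. An entry in a later window than $j_1$'s would exceed $\omega_{j_1}$, and positions later than $j_1$ cannot lie in an earlier window. So $j_1,\dots,j_{k-1}$ all lie in the same window, and after translation they give an occurrence of $p_{[1:k-1]}$ in $u$. Thus $\omega$ avoids $p$ if and only if $u$ avoids $p_{[1:k-1]}$. The same reasoning applies to $231$: its last two letters are smaller than its first, which forces all three positions into one window. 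So $\omega$ avoids $231$ if and only if $u$ does.

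The second step uses the fact that $p_{[1:k-1]}$, after standardization, begins with its maximum $k-1$, so Lemma \ref{lem: count permutations n---} applies to it. Splitting at $u_i=n$, the word $u$ is $(231,p_{[1:k-1]})$-avoiding if and only if $u_{[1:i-1]}$ is $(231,p_{[1:k-1]})$-avoiding and $u_{[i+1:n]}$ is $(231,p_{[2:k-1]})$-avoiding. This is exactly the claimed criterion.

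The main obstacle is the window-confinement argument in the first step: checking carefully that no occurrence of $p$ can straddle windows except through its last entry. This is where the hypotheses $p_1=k-1$ and $p_k=k$ are used.
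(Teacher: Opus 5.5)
Your proof is correct and follows the route the paper indicates. The paper omits the proof as analogous to Lemma~\ref{lem:avoiding for 1n--} and notes only that $p_{[1:k-1]}$ begins with its largest entry, so its occurrences stay in one window; like that sketch, you reduce avoidance of $p$ by the affine permutation to avoidance of $p_{[1:k-1]}$ by $u$, and then split at $u_i=n$ using Lemma~\ref{lem: count permutations n---}.

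One small slip: the last two letters of $231$ are not both smaller than its first. The correct reason is that its last letter is smaller than its first, which forces the first and last positions into the same window, and the middle position is sandwiched between them.
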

The hypothesis $p_1 = k-1$ guarantees that every occurrence of $p_{[1:k-1]}$ lies within a single window, since $p_{[1:k-1]}$ begins with its largest entry.

Combining Lemmas \ref{lem:231 avoiding in S_n} and \ref{lem:avoiding for n---} allows us to enumerate the affine permutations avoiding $(231, p)$, where the pattern $p \in S_k$ satisfies $p_1 = k$. 

\begin{lem}\label{lem: formula affine avoid n---}
    For $p \in S_k$ with $p_1 = k$, the number of $(231, p)$-avoiding affine permutations is $$a_n^{(231, p)} = \sum_{\alpha = 1}^n \left(\sum_{\beta = 0}^{\alpha - 1} s_{\beta}^{(231, p)}s_{n-\beta-1}^{(231, p_{[2:k]})}\right) = \sum_{j = 0}^{n-1} (n-j) s_{j}^{(231, p)}s_{n-j-1}^{(231, p_{[2:k]})}.$$
\end{lem}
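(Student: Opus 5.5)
We set up a bijection between $(231,p)$-avoiding affine permutations $\omega\in\tilde S_n$ and pairs $(\alpha,u)$. Here $1\le\alpha\le n$, $u\in S_n$ is $(231,p)$-avoiding, and the position $\beta$ of $n$ in $u$ satisfies $1\le \beta\le \alpha$. We then count these pairs with Lemma \ref{lem:avoiding for n---}.

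\emph{Forward map.} Given a $231$-avoiding $\omega$, let $\alpha$ be the index with $\omega_\alpha=\max\{\omega_1,\dots,\omega_n\}$. Lemma \ref{lem:231 avoiding in S_n} gives $u=\sigma_l^{\omega_\alpha-n}(\omega)\in S_n$ with $u_\beta=n$, where $\beta=\alpha-\omega_\alpha+n$. By Lemma \ref{lem:inversion}, $\omega$ avoids $p$ and $231$ if and only if $u$ does. So $\omega\mapsto(\alpha,u)$ sends $(231,p)$-avoiders to pairs of the stated kind. Here $1\le\beta\le\alpha$ holds: $\beta\ge1$ because $u\in S_n$, and $\beta\le\alpha$ because $\omega_\alpha\ge n$ (the window entries are distinct with sum $\binom{n+1}{2}$, so the maximum is at least $n$).

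\emph{Inverse map.} Given such a pair $(\alpha,u)$, set $m=\alpha-\beta\ge 0$ and $\omega=\sigma_r^{m}(u)$. The map $\sigma_r$ shifts the window right by one position and adds $1$. Hence $\omega$ has its entry $n+m$ at position $\beta+m=\alpha$, since $\beta+m\le n$ and so no wrap affects that entry. It remains to check that $\omega_\alpha$ is the maximum of the window. Entries of $u$ at positions after $\beta$ become $u_j+m$ at positions $j+m$, with $u_j<n$. Entries that wrap are decreased by $n$, so they are smaller still. Thus $\omega_\alpha=n+m$ is the maximum, $\alpha$ is recovered as its index, and $u=\sigma_l^{m}(\omega)$ is recovered. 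Both maps preserve avoidance by Lemma \ref{lem:inversion}, so the correspondence is a bijection. This verification that the maximum stays at position $\alpha$ is the only delicate step.

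\emph{Counting.} Fix $\alpha$ and $\beta$. By Lemma \ref{lem:avoiding for n---} (equivalently Lemma \ref{lem: count permutations n---}), the number of $(231,p)$-avoiding $u\in S_n$ with $u_\beta=n$ is $s_{\beta-1}^{(231,p)}s_{n-\beta}^{(231,p_{[2:k]})}$. Reindex $\beta-1\mapsto\beta\in\{0,\dots,\alpha-1\}$ and sum over $\alpha$ to get the first expression. For the second, swap the order of summation: the index $j=\beta$ occurs for every $\alpha$ with $j+1\le\alpha\le n$, which is $n-j$ values of $\alpha$. This gives $\sum_{j=0}^{n-1}(n-j)s_j^{(231,p)}s_{n-j-1}^{(231,p_{[2:k]})}$.
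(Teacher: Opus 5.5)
Your proposal is correct and follows essentially the same route as the paper: the bijection $\omega \mapsto (\alpha, \sigma_l^{\omega_\alpha - n}(\omega))$ with inverse $u \mapsto \sigma_r^{\alpha-\beta}(u)$, then counting via Lemma~\ref{lem:avoiding for n---} and swapping the order of summation. Your explicit checks that $\omega_\alpha \geq n$ (so $\beta \leq \alpha$) and that $n+m$ remains the unique window maximum after shifting fill in details the paper only asserts.
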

\begin{proof}
    Let $\omega$ be a $(231, p)$-avoiding affine permutation. By Lemma \ref{lem:231 avoiding in S_n}, $\sigma_l^{\omega_{\alpha} - n}(\omega) = u  \in S_n$ with $u_{\beta} = n$ for some $1 \leq \beta \leq \alpha$. Since $\sigma_l$ is an automorphism that preserves relative order, we know $u$ is also $(231, p)$-avoiding. Thus, it suffices to count the permutations $u \in S_n$ avoiding $(231, p)$. By Lemma $\ref{lem:avoiding for n---}$, there are $s_{\beta-1}^{(231, p)}s_{n-\beta}^{(231, p_{[2:k]})}$ such permutations.

    Conversely, fix $1\leq\beta\leq\alpha\leq n$ and any such $u$ with $u_\beta=n$. Set $\omega=\sigma_r^{\alpha-\beta}(u)$, which avoids $(231,p)$ by Lemmas~\ref{lem:inversion} and~\ref{lem:avoiding for n---}. Since $\alpha-\beta\leq n-\beta$, the entry $n$ does not wrap, and the unique maximum of the resulting window is $\omega_\alpha=n+\alpha-\beta$. Thus, $\alpha$ is recovered from $\omega$, and $u=\sigma_l^{\omega_\alpha-n}(\omega)$, which establishes the required bijection.
    
    Summing over all possible values of $\beta$ tells us that the number of $(231, p)$-avoiding affine permutations is
    \begin{align*}
        \sum_{\beta = 1}^{\alpha} s_{\beta-1}^{(231, p)}s_{n-\beta}^{(231, p_{[2:k]})} = \sum_{\beta = 0}^{\alpha - 1} s_{\beta}^{(231, p)}s_{n-\beta - 1}^{(231, p_{[2:k]})}.
    \end{align*}

    The index $\alpha$ ranges from $1$ to $n$, so summing over all $1 \leq \alpha \leq n$ gives
    \begin{align*}
        a_n^{(231, p)} = \sum_{\alpha = 1}^n \left(\sum_{\beta = 0}^{\alpha - 1} s_{\beta}^{(231, p)}s_{n-\beta-1}^{(231, p_{[2:k]})}\right) = \sum_{j = 0}^{n-1} (n-j)s_j^{(231, p)}s_{n-j-1}^{(231, p_{[2:k]})},
    \end{align*}
    as desired. 
\end{proof}

Combining Lemmas \ref{lem:231 avoiding in S_n} and \ref{lem:avoiding for 1n--} also allows us to enumerate the affine permutations avoiding $(231, p)$, where $p \in S_k$ is a pattern with $p_1 = 1$ and $p_2 = k$. We omit the proof as it is similar to the proof above.
\begin{lem}\label{lem: formula affine avoid 1n--}
    For $p \in S_k$ with $p_1 = 1$ and $p_2 = k$, the number of $(231, p)$-avoiding affine permutations is $$a_n^{(231, p)} = \sum_{\alpha = 1}^n \left(\sum_{\beta = 0}^{\alpha - 1} s_{\beta}^{(231, p_{[2:k]})}s_{n-\beta-1}^{(231, p_{[3:k]})}\right) = \sum_{j = 0}^{n-1} (n-j) s_{j}^{(231, p_{[2:k]})}s_{n-j-1}^{(231, p_{[3:k]})}.$$
\end{lem}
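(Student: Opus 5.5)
We prove the statement by following the proof of Lemma~\ref{lem: formula affine avoid n---}, with Lemma~\ref{lem:avoiding for 1n--} in place of Lemma~\ref{lem:avoiding for n---}. There are three steps: reduce each $(231,p)$-avoiding affine permutation to a permutation $u\in S_n$, count the admissible $u$ with a prescribed position of $n$, and check that the reduction is a bijection onto pairs $(\alpha,\beta)$ together with such $u$.

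\textbf{Reduction.} Let $\omega\in\tilde S_n$ avoid $(231,p)$, and let $\alpha$ be the index of the maximum entry of the base window. By Lemma~\ref{lem:231 avoiding in S_n}, the affine permutation $u=\sigma_l^{\omega_\alpha-n}(\omega)$ lies in $S_n$, and $u_\beta=n$ where $\beta=\alpha-\omega_\alpha+n$ satisfies $1\le\beta\le\alpha$. By Lemma~\ref{lem:inversion}, $\sigma_l$ and $\sigma_r$ preserve avoidance of any pattern, so $u$, viewed as an affine permutation, still avoids $(231,p)$. Here avoidance must be taken in the affine sense, not merely for the finite word $u$. This is exactly the setting of Lemma~\ref{lem:avoiding for 1n--}.

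\textbf{Counting.} By Lemma~\ref{lem:avoiding for 1n--}, the affine permutation with window $u$ (where $u_\beta=n$) avoids $(231,p)$ if and only if $u_{[1:\beta-1]}\in S_{\beta-1}$ avoids $(231,p_{[2:k]})$ and $u_{[\beta+1:n]}$ avoids $(231,p_{[3:k]})$. The prefix must be a permutation of $\{1,\dots,\beta-1\}$ by $231$-avoidance, as in Lemma~\ref{lem: count permutations n---}. The two halves can therefore be chosen independently, giving $s_{\beta-1}^{(231,p_{[2:k]})}\,s_{n-\beta}^{(231,p_{[3:k]})}$ choices for each $\beta$.

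\textbf{Bijection.} Fix $1\le\beta\le\alpha\le n$ and an admissible $u$, and set $\omega=\sigma_r^{\alpha-\beta}(u)$. Since $\alpha-\beta\le n-\beta$, the entry $n$ shifts right to position $\alpha$ without wrapping and becomes $n+\alpha-\beta$. Every other entry, whether shifted without wrapping or wrapped and reduced by $n$, remains strictly smaller, so $\alpha$ is recovered as the position of the unique window maximum. Applying $\sigma_l^{\omega_\alpha-n}$ then returns $u$, which is the inverse of the reduction step.

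We expect the only delicate point to be this injectivity argument: checking that the maximum of the rotated window stays at position $\alpha$ and is unique. The argument is identical to the one in Lemma~\ref{lem: formula affine avoid n---}.

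Summing over $1\le\beta\le\alpha$ and then over $1\le\alpha\le n$ gives the double sum in the statement. Reindexing with $j=\beta-1$, each $j\in\{0,\dots,n-1\}$ occurs for exactly $n-j$ values of $\alpha$ (namely $\alpha=j+1,\dots,n$), which gives the closed form $\sum_{j=0}^{n-1}(n-j)\,s_j^{(231,p_{[2:k]})}s_{n-j-1}^{(231,p_{[3:k]})}$.
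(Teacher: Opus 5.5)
Your proposal is correct and follows the paper's approach: the paper omits this proof as ``similar to'' that of Lemma~\ref{lem: formula affine avoid n---}, and you carry out exactly that argument (reduction via Lemma~\ref{lem:231 avoiding in S_n}, counting via Lemma~\ref{lem:avoiding for 1n--} in place of Lemma~\ref{lem:avoiding for n---}, the same $\sigma_r^{\alpha-\beta}$ bijection, then reindexing). You also correctly note that avoidance for $u$ must be read in the affine sense, which is what turns $p$ into $p_{[2:k]}$ on the prefix and is the only point where this case differs from the $p_1=k$ case.
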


\begin{lem}\label{lem: formula affine avoid ---n}
    For $p \in S_k$ with $p_1 = k-1$ and $p_k = k$, the number of $(231, p)$-avoiding affine permutations is $$a_n^{(231, p)} = \sum_{\alpha = 1}^n \left(\sum_{\beta = 0}^{\alpha - 1} s_{\beta}^{(231, p_{[1:k-1]})}s_{n-\beta-1}^{(231, p_{[2:k-1]})}\right) = \sum_{j = 0}^{n-1} (n-j) s_{j}^{(231, p_{[1:k-1]})}s_{n-j-1}^{(231, p_{[2:k-1]})}.$$
\end{lem}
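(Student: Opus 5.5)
The proof will follow the template of Lemma \ref{lem: formula affine avoid n---}, with Lemma \ref{lem:avoiding for ---n} used in place of Lemma \ref{lem:avoiding for n---}. There are two steps. First, reduce every $(231,p)$-avoiding affine permutation $\omega$ to a pair $(u,\alpha)$ with $u\in S_n$. Second, count the admissible $u$ for each fixed position $\beta$ of the entry $n$.

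\textbf{Reduction.} Let $\omega\in\tilde S_n$ avoid $(231,p)$. Take $\alpha$ to be the index of the maximum of $\omega_1,\dots,\omega_n$. By Lemma \ref{lem:231 avoiding in S_n}, $u=\sigma_l^{\omega_\alpha-n}(\omega)$ lies in $S_n$, and $u_\beta=n$ with $\beta=\alpha-\omega_\alpha+n\in\{1,\dots,\alpha\}$. Lemma \ref{lem:inversion} shows that $u$, viewed as an affine permutation, still avoids both $231$ and $p$.

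\textbf{Inverse map.} Conversely, fix $1\le\beta\le\alpha\le n$ and a $u\in S_n$ whose affine permutation avoids $(231,p)$ and has $u_\beta=n$. Set $\omega=\sigma_r^{\alpha-\beta}(u)$. Since $\alpha-\beta\le n-\beta$, the entry $n$ never wraps around. So the maximum of the window of $\omega$ is $n+\alpha-\beta$, and it sits in position $\alpha$. Hence $\alpha$ can be read off from $\omega$, and $\sigma_l^{\omega_\alpha-n}(\omega)=u$. By Lemma \ref{lem:inversion}, $\omega$ again avoids $(231,p)$. This is exactly the bijection argument from the proof of Lemma \ref{lem: formula affine avoid n---}, copied verbatim.

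\textbf{Counting.} For fixed $\beta$, Lemma \ref{lem:avoiding for ---n} says the admissible $u$ are exactly those for which:
\begin{itemize}
\item $u_{[1:\beta-1]}\in S_{\beta-1}$ avoids $(231,p_{[1:k-1]})$;
\item $u_{[\beta+1:n]}$ avoids $(231,p_{[2:k-1]})$.
\end{itemize}
In a $231$-avoiding $u$ with $u_\beta=n$, every entry before position $\beta$ is smaller than every entry after it. So the two blocks can be chosen independently, and the number of admissible $u$ is $s_{\beta-1}^{(231,p_{[1:k-1]})}s_{n-\beta}^{(231,p_{[2:k-1]})}$. Summing over $1\le\beta\le\alpha$ and then over $1\le\alpha\le n$, and reindexing with $j=\beta-1$, gives the double sum in the statement. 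The term with index $j$ appears once for each $\alpha\ge j+1$, that is, $n-j$ times, which gives the single-sum form.

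\textbf{Main obstacle.} The delicate point is Lemma \ref{lem:avoiding for ---n} itself, whose proof the paper omits. If it needs checking, the argument is as follows. Because $u\in S_n$, we have $u_{a-n}<u_b<u_{a+n}$ for all $1\le a,b\le n$. Consider an occurrence of $p$ in the affine permutation.
\begin{itemize}
\item Its first entry has value $k-1$, which is below only the last entry (value $k$). So the first $k-1$ entries must lie in a single window; in particular, an occurrence of $p_{[1:k-1]}$ in $u$ always extends to $p$ by appending an entry from the next window.
\item Hence the affine permutation avoids $p$ if and only if $u$ avoids $p_{[1:k-1]}$.
\item Decomposing $u$ around the entry $n$ as in Lemma \ref{lem: count permutations n---}, and using that $p_{[1:k-1]}$ begins with its maximum, reduces the condition to $p_{[1:k-1]}$ on the left block and $p_{[2:k-1]}$ on the right block.
\end{itemize}
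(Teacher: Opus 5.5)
Your proposal is correct and is exactly the argument the paper intends. The paper gives no separate proof of this lemma, treating it as the same as the proof of Lemma~\ref{lem: formula affine avoid n---}: reduce via Lemma~\ref{lem:231 avoiding in S_n}, invert with $\omega=\sigma_r^{\alpha-\beta}(u)$, count with Lemma~\ref{lem:avoiding for ---n}, and sum over $1\le\beta\le\alpha\le n$. Your optional check of Lemma~\ref{lem:avoiding for ---n} is also sound; to complete it, note that the same window argument shows the affine permutation avoids $231$ if and only if $u$ does, because $231$ ends with its smallest entry.
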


Finally, from Lemmas \ref{lem: same count perm} and \ref{lem: formula affine avoid n---}, we obtain the following.

\begin{lem}\label{lem: same count affine}
    Let $p, p' \in S_k$ such that $p_1 = p'_1 = k$. If $s_n^{(231, p_{[2:k]})} = s_n^{(231, p'_{[2:k]})}$ for all $n$, then $a_n^{(231, p)} = a_n^{(231, p')}$ for all $n$. 
\end{lem}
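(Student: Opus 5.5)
Lemma \ref{lem: same count affine} follows from the closed formula in Lemma \ref{lem: formula affine avoid n---}, once we also know that the numbers $s_j^{(231,p)}$ and $s_j^{(231,p')}$ agree for every $j$. For $p$ with $p_1=k$, Lemma \ref{lem: formula affine avoid n---} gives
\[
a_n^{(231,p)}=\sum_{j=0}^{n-1}(n-j)\,s_j^{(231,p)}\,s_{n-j-1}^{(231,p_{[2:k]})},
\]
and the analogous identity holds for $p'$. It therefore suffices to show, for every $m\ge 0$, that $s_m^{(231,p)}=s_m^{(231,p')}$ and $s_m^{(231,p_{[2:k]})}=s_m^{(231,p'_{[2:k]})}$. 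Given these, the two sums agree term by term, and hence $a_n^{(231,p)}=a_n^{(231,p')}$.

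The equality for the suffix patterns $p_{[2:k]}$ and $p'_{[2:k]}$ is exactly the hypothesis. For the full patterns we would argue by strong induction on $m$, using the convolution recurrence of Lemma \ref{lem: count permutations n---}:
\[
s_m^{(231,p)}=\sum_{i=1}^m s_{i-1}^{(231,p)}\,s_{m-i}^{(231,p_{[2:k]})},
\]
together with the same recurrence for $p'$.

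\emph{Base case.} When $m=0$, both counts equal $1$, since each counts only the empty permutation.

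\emph{Inductive step.} Assume $s_{m'}^{(231,p)}=s_{m'}^{(231,p')}$ for all $m'<m$. In the recurrence for $s_m^{(231,p)}$, every factor $s_{i-1}^{(231,p)}$ has index $i-1<m$, so the induction hypothesis replaces it by $s_{i-1}^{(231,p')}$. Every factor $s_{m-i}^{(231,p_{[2:k]})}$ equals $s_{m-i}^{(231,p'_{[2:k]})}$ by hypothesis. The two recurrences therefore agree term by term, which gives $s_m^{(231,p)}=s_m^{(231,p')}$.

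Alternatively, Lemma \ref{lem: same count perm} gives this inductive step directly: its hypothesis, equality of the suffix counts for all indices below $m$, holds for every $m$ by assumption.

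There is essentially no obstacle here. The only point needing care is the order of the argument: equality of the $s^{(231,p)}$ sequences must be established first, for all indices, before it is substituted into Lemma \ref{lem: formula affine avoid n---}. That lemma involves both the full pattern and its suffix, so the suffix hypothesis alone is not enough.
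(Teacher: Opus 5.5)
Your proof is correct and follows the paper's route: the paper obtains this lemma directly from Lemma~\ref{lem: same count perm}, which gives $s_m^{(231,p)} = s_m^{(231,p')}$ for all $m$, combined with the convolution formula of Lemma~\ref{lem: formula affine avoid n---}. Your explicit strong induction unpacks the induction implicit in Lemma~\ref{lem: same count perm}, and you correctly note that the full-pattern counts must be matched first, since the suffix hypothesis alone does not suffice.
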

\section{Noncrossing arc diagrams}\label{sec: arc diagrams}

There is a bijection between $312$-avoiding TITOs and combinatorial objects called noncrossing arc diagrams. Barkley and Defant \cite{BD25+} used these diagrams to enumerate $312$-avoiding TITOs. We extend results from \cite{BD25+} by giving an explicit recursive construction of window notation for the inverse of the bijection between $312$-avoiding TITOs and noncrossing arc diagrams in $\S\ref{subsec:inverse construction}$. This construction will help us enumerate $(312, p)$-avoiding TITOs in $\S\ref{subsubsec: arc diagram approach}$ later.

We first discuss the forward map from $312$-avoiding TITOs to noncrossing arc diagrams. 

\begin{defi}
    A \dfn{reflection index} is a pair $(a, b)$ of integers with $a < b$, considered up to simultaneous translations by multiples of $n$ on each coordinate. We always consider reflection indices with $a \in [n]$. An \dfn{inversion} of a TITO $\preceq$ is a reflection index $(a, b)$ such that $a \succeq b$. 

    For $a, b \in \Z$ with $a \preceq b$, the \dfn{interval} between $a$ and $b$ is the set $[a, b] = \{c \in \Z \mid a \preceq c \preceq b\}$. If $[a, b]$ has cardinality $2$, then we say $b$ \dfn{covers} $a$ and $a \preceq b$ is a \dfn{cover relation}. A \dfn{wall} of a TITO $\preceq$ is a reflection index $(a, b)$ such that $a \preceq b$ is a cover relation or $b \preceq a$ is a cover relation. We say a wall $(a, b)$ is an \dfn{upper wall} if $a \preceq b$; otherwise, we say it is a \dfn{lower wall}.
\end{defi}

\begin{example}
    The set of lower walls for the TITO $[1][\underline{2}]$ (\ref{eq:tito1}) is $\{(2,4)\}$.
\end{example}

\begin{defi}
    Consider an annulus whose outer boundary has $n$ marked points $v_1,\ldots,v_n$ in clockwise order. We regard the marked points as elements of $\Z/n\Z$. An \dfn{arc} is a simple curve inside the annulus directed clockwise around the puncture that starts at one marked boundary point and ends at another marked boundary point (potentially the same as its starting point). We denote an arc from $v_i$ to $v_j$ by $\gamma_{i, j}$.
    
    Arcs are considered up to isotopy, so they are in one-to-one correspondence with the reflection indices $(a,b)$ such that $b\leq a+n$. The arc corresponding to a reflection index $(a,b)$ is $\gamma_{a,b}$ if $b \leq n$ and $\gamma_{a, b-n}$ if $a + n \geq b > n$. A \dfn{noncrossing arc diagram} is a collection of arcs that can be drawn so that they do not intersect in their interiors and such that each marked point has at most one incoming arc and at most one outgoing arc. (The second condition is not implied by the first because two arcs that share an initial point or share a terminal point have disjoint interiors, yet such a pair is not allowed.)
    
    Note that each noncrossing arc diagram has at most one cycle.
\end{defi}

\begin{defi}
    Let $\preceq$ be a $312$-avoiding TITO with period $n$. The \dfn{arc diagram} of $\preceq$, denoted $\cA(\preceq) = (V, E)$, is the set of boundary points $V = \{v_1, \dots, v_n\}$ and the set of arcs
    $$ E = \{\gamma_{a,b}: (a, b) \text{ or } (a, b+n) \text{ is a lower wall for }a, b \in [n]\}. $$
    (We check both $(a,b)$ and $(a,b+n)$ because $b \in [n]$ only represents its residue class mod $n$, and the integer actually covering or covered by $a$ could correspond to either translate.)
\end{defi}
Note that, by construction, each boundary point in $\cA(\preceq)$ has at most one incoming arc and one outgoing arc, since each $a \in [n]$ is covered in $\preceq$ by exactly one element and covers exactly one element. Also, since reflection indices are considered up to simultaneous translations by multiples of $n$, we can determine the arc diagram of a TITO directly from its window notation without first reconstructing the corresponding TITO.

Barkley and Defant \cite{BD25+} exhibited a bijection between noncrossing arc diagrams and $312$-avoiding TITOs to enumerate $312$-avoiding TITOs. We restate their theorem below, and we give an explicit inverse construction to enumerate $(312, p)$-avoiding TITOs in $\S\ref{subsubsec: arc diagram approach}$.

\begin{thm}[\cite{BD25+}, Proposition 5.1]\label{thm:arcdiagram312}
    The map $\cA$ is a bijection from the set of $312$-avoiding $n$-TITOs to the set of noncrossing arc diagrams with $n$ marked boundary points. In particular, a $312$-avoiding TITO is determined by its collection of lower walls.
\end{thm}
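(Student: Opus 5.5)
We prove Theorem~\ref{thm:arcdiagram312} in three steps: $\cA$ lands in noncrossing arc diagrams, $\cA$ is injective, and $\cA$ is surjective. By Lemma~\ref{lem:312 blocks}, every $312$-avoiding TITO has one of three shapes: a single waxing block $[a_1,\dots,a_k]$, a single waning block listed in decreasing order, or a waxing block followed by a waning block. Every cover relation either lies inside a block, where consecutive window entries are covers and $a_k \preceq a_1+n$ is a cover, or straddles the boundary between blocks. In the two-block case there is no cover between the blocks, since neither block has a maximum or minimum. So the lower walls are exactly the pairs $a_i\succ a_{i+1}$ with $a_i<a_{i+1}$ read inside a window, including the wrap-around pair. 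Lemma~\ref{lem:windows of 312} says every such pair satisfies $b-a<n$ (for the wrap pair, after translating). This is exactly the condition $b\le a+n$ needed for $(a,b)$ to correspond to an arc. Each point has at most one incoming and one outgoing arc because covers are unique.

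To show the arcs are noncrossing, suppose two lower walls $(a,b)$ and $(c,d)$ give arcs that cross. After translating, this means the reflection indices interleave, $a<c<b<d$ with $d-a$ suitably small. I would then derive a $312$ pattern. Since $b$ covers $a$ and $d$ covers $c$ from below, one compares where $c$ and $d$ sit relative to $a,b$ in $\preceq$. Because $a,b$ are adjacent in $\preceq$, $c$ lies on one side of both. In each of the two cases, one of the triples $(a,c,b)$, $(c,b,d)$ (or a translate by $\pm n$) realizes $c'\preceq a'\preceq b'$ with $a'<b'<c'$.

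For injectivity and surjectivity, I would construct the inverse recursively on $n$, which is the explicit construction promised above. Given a noncrossing arc diagram $D$:
\begin{enumerate}
\item If $D$ has no arcs, output the single waxing block $[1,2,\dots,n]$.
\item Otherwise, choose an arc $\gamma_{i,j}$ that is minimal in the sense that no other arc nests inside it. Such an arc exists because the diagram is noncrossing. If it is a loop or a full cycle, it determines a waning block.
\item Contract the marked points strictly between $v_i$ and $v_j$ that it covers, or merge $v_i$ with $v_j$, to get a diagram on $n-1$ points.
\item Apply the construction recursively, then reinsert the removed element immediately after its partner in the appropriate window, relabelling integers so that the inserted element is placed correctly.
\end{enumerate}
Three checks are needed: the insertion produces exactly the lower walls of $D$, it preserves $312$-avoidance, and applying $\cA$ after the construction gives back $D$. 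Injectivity then follows because the lower walls, read as descents inside the windows, together with the increasing runs forced between them, pin down each window up to the standard translation. Equivalently, a counting argument suffices: both sets have size $\binom{2n}{n}$ by the theorem of Crites and Barkley--Defant together with the standard count of noncrossing arc diagrams on an annulus, so injectivity alone is enough.

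The main obstacle will be the crossing analysis in the annulus. Arcs that wind around the puncture correspond to pairs with $b>n$, and the interleaving condition must be checked modulo $n$. This makes the case split delicate, especially for wrap-around covers and for walls inside a waning block, where $\preceq$ reverses the usual order. I would handle it by always lifting to representatives with $a\in[n]$ and treating the waxing and waning blocks separately.
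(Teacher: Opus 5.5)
Your first step is essentially right. Lifting two crossing arcs to interleaved lower walls $a<c<b<d$, then splitting on $c\prec b$ versus $a\prec c$, does produce the $312$ patterns $(c,b,d)$ or $(a,c,b)$. There are two small slips. For a lower wall $(a,b)$ it is $a$ that covers $b$, not the reverse. And a singleton waning block gives the wall $(a,a+n)$, which Lemma~\ref{lem:windows of 312} does not cover, although it still gives the loop $\gamma_{a,a}$.

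So the crossing analysis is the easy part. The genuine gap is in the bijectivity argument. Your recursion works cleanly only when the chosen minimal arc has length $1$: you split $x$ into the adjacent pair $x+1\prec x$, and walls and $312$-avoidance are easily seen to be preserved in both directions. A minimal arc of length at least $2$ has isolated points under it. There, ``reinsert the removed element immediately after its partner'' is undefined, and the walls do not determine the position. For example, with $n=3$ and $D=\{\gamma_{1,3}\}$, both $[2,3,1]$ and $[3,1,2]$ have lower-wall set $\{(1,3)\}$, and only $312$-avoidance rules out $[3,1,2]$.

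What you actually need is an explicit insertion rule, plus two proofs:
\begin{enumerate}
\item the rule yields a $312$-avoiding TITO with diagram $D$ (surjectivity);
\item in every $312$-avoiding TITO with diagram $D$, the deleted element must sit exactly there (injectivity).
\end{enumerate}
These are the three checks you list, and none of them is carried out. Likewise, ``the increasing runs forced between them pin down each window'' never uses $312$-avoidance, yet the example shows it must. A usable ingredient is this: if $(a,b)$ is a lower wall, every $c$ with $a<c<b$ precedes $b$, since otherwise $b\prec a\prec c$ is a $312$.

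The counting shortcut does not rescue the argument. The count $\binom{2n}{n}$ for $312$-avoiding TITOs is Barkley--Defant's Corollary 5.2, and they derive it from this very bijection. Crites's theorem treats only $123$ and $321$, so invoking the count is circular.

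The paper does not reprove the theorem either; it cites \cite{BD25+}, which specializes \cite{B25+}. For surjectivity, however, it supplies a complete global replacement for your recursion: Algorithm~\ref{algo: inverse construction}. It uses the compass labeling, the walkers cut out by the cycle, and recursive subcompass paths, with a proof that the output is $312$-avoiding and has arc diagram $A$. That construction places the points lying under arcs of the subcompass path, recursively, to the left of the path's decreasing run, which is exactly the rule your step 4 lacks. Injectivity would still need a separate argument along the lines above, or the citation.
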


\begin{example}
    Let $n = 7$. For the 312-avoiding TITO $[1, 4, 3, 6, 5][\underline{7, 2}]$, the lower walls are $\{(2, 7), (3, 4), (5, 6), (7, 9)\}$, which correspond to arcs $\gamma_{2, 7}, \gamma_{3, 4}, \gamma_{5,6},$ and $\gamma_{7,2}$, respectively. We obtain the noncrossing arc diagram in Figure~\ref{fig:arcdiagram1}.
\begin{figure}[H]
    \centering    
\scalebox{0.7}{\begin{tikzpicture}[
    dot/.style={circle, fill, inner sep=1.5pt},
    arrow/.style={->, blue, thick}]
    \def\radius{3cm}

    \draw[thick, fill=gray!15] (0,0) circle (\radius);

    \draw[thick, fill=white, draw=black] (0,0) circle (5pt);

    \foreach \p/\pos in {1/above right, 2/right, 3/below right, 4/below, 5/below left, 6/left, 7/above} {
        \pgfmathsetmacro{\angle}{90 + (7-\p)*360/7}
        
        \ifnum\p=1
            \node[dot, label=\pos:$v_{\p}$] (\p) at (\angle:\radius) {};
        \else
            \node[dot, label=\pos:$v_{\p}$] (\p) at (\angle:\radius) {};
        \fi
    }

    \draw[arrow, looseness=2] (2) to [out=-170, in=-110] (7);

    \draw[arrow, looseness=0.1] (7) to[out=-30] (2);

    \draw[arrow, looseness=1] (5) to[out=60, in=-90] (6);

    \draw[arrow, looseness=1] (3) to[out=135, in=45] (4);
\end{tikzpicture}
}
\caption{The noncrossing arc diagram of the $312$-avoiding TITO $[1, 4, 3, 6, 5][\underline{7, 2}]$.}
    \label{fig:arcdiagram1}
\end{figure}
\end{example}
    Note that the puncture allows us to distinguish $\gamma_{a, b}$ from $\gamma_{b, a}$ even when we consider arcs up to isotopy.

\begin{example}    
    The $312$-avoiding TITO $[1,6,5,7,4,3][\underline{2}]$ has lower walls $\{(5,6),\allowbreak (4,7),\allowbreak (3,4),\allowbreak (2,9)\}$, which correspond to arcs $\gamma_{5,6}, \gamma_{4,7}, \gamma_{3,4}$ and $\gamma_{2,2}$, respectively.  We obtain the noncrossing arc diagram in Figure \ref{fig:arcdiagram2}.
    \begin{figure}[H]
    \scalebox{0.7}{
    \begin{tikzpicture}[
    dot/.style={circle, fill, inner sep=1.5pt},
    arrow/.style={->, blue, thick}
]
    \def\radius{3cm}

    \draw[thick, fill=gray!15] (0,0) circle (\radius);

    \draw[thick, fill=white, draw=black] (0,0) circle (5pt);

    \foreach \p/\pos in {1/above right, 2/right, 3/below right, 4/below, 5/below left, 6/left, 7/above} {
        \pgfmathsetmacro{\angle}{90 + (7-\p)*360/7}
        
        \ifnum\p=1
            \node[dot, label=\pos:$v_\p$] (\p) at (\angle:\radius) {};
        \else
            \node[dot, label=\pos:$v_\p$] (\p) at (\angle:\radius) {};
        \fi
    }

    \draw[arrow, looseness=1.0] (4) to[out=100, in=-120] (7);

    \draw[arrow, looseness=1] (5) to[out=60, in=-90] (6);

    \draw[arrow, looseness=1.0] (3) to[out=135, in=45] (4);
    
    \coordinate (loop_via) at (-0.5, 0);
    
    \draw[arrow, looseness=1.0] (2) to[out=200, in=-90] (loop_via) to[out=90, in=110] (2);

\end{tikzpicture}
    }
\caption{The noncrossing arc diagram of the $312$-avoiding TITO $[1, 6, 5, 7, 4, 3][\underline{2}]$.}
    \label{fig:arcdiagram2}
\end{figure}
\end{example}

\begin{defi}
    We view arcs as \dfn{directed edges} between boundary points. Denote the \dfn{length} of an arc $\gamma_{a, b}$ by $\ell(\gamma_{a,b})$, and set
    \begin{align*}
        \ell(\gamma_{a,b}) = \begin{cases}
            b - a &\text{if } b > a; \\
            b+n - a &\text{otherwise.}
        \end{cases}
    \end{align*}
    
    A $\dfn{path}$ with $k$ steps consists of $k$ arcs $\gamma_{a_1, a_2}, \gamma_{a_2, a_3}, \dots, \gamma_{a_k, a_{k+1}}$ corresponding to the path of edges $v_{a_1} \to v_{a_2} \to \cdots \to v_{a_{k+1}}$. A $\dfn{cycle}$ is a path which starts and ends at the same boundary point. Note that, since arcs are clockwise directed around the puncture, all cycles must loop around the puncture, i.e., all cycles have nontrivial homotopy classes.
\end{defi}

\begin{defi}
    A \dfn{labeling} of a noncrossing arc diagram $A = (V, E)$ with $n$ boundary points is a map $\rho: V \to \Z$ such that $\rho(v_i) \equiv i \pmod{n}$ for all $1 \leq i \leq n$. Equivalently, we can view the labeling as picking a representative from the residue class of $i$ modulo $n$ for each boundary point $v_i$. The \dfn{identity labeling} is the map $\id: V \to \Z$ such that $\id(v_i) = i$ for all $1 \leq i \leq n$. 
\end{defi}

We first record how cycles in arc diagrams correspond to waning blocks.

\begin{lem}\label{lem:cycle}
    Given a $312$-avoiding TITO $\preceq$, its arc diagram $\cA(\preceq)$ includes a cycle if and only if $\preceq$ has a waning block. In this case, the residues modulo $n$ of the elements of the waning block are exactly the indices $i$ of the boundary points $v_i$ lying on the cycle. 
\end{lem}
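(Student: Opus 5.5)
The plan is to read off the cover relations of $\preceq$ block by block. By Lemma~\ref{lem:312 blocks}, $\preceq$ has at most two blocks: a waxing block and/or a waning block. Within each block, consecutive elements of a window cover one another, and the last window element is covered by the first element shifted by $\pm n$. Boundaries between blocks produce no cover relations at all, since a block has no maximal or minimal element. So every lower wall comes from a cover relation inside a single block, and each boundary point $v_i$ has an outgoing arc exactly when $i$ is covered by a smaller integer, namely the one of the form $i+m$ with $m>0$ that covers it.

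For the direction from a waning block to a cycle, suppose the waning block has window $[\underline{a_1,\dots,a_k}]$. By Lemma~\ref{lem:312 blocks} its elements appear in decreasing order, so $a_1>a_2>\cdots>a_k>a_1-n$. Each cover relation $a_j\preceq a_{j+1}$, and also $a_k\preceq a_1-n$, has the larger integer below the smaller one, so each is a lower wall. Translating, we get the arcs $\gamma_{a_{j+1},a_j}$ for $j=1,\dots,k-1$ and $\gamma_{a_1,a_k}$, with residues mod $n$. Following these arcs $v_{a_k}\to v_{a_{k-1}}\to\cdots\to v_{a_1}\to v_{a_k}$ gives a cycle whose vertices are exactly the residues of the block.

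For the converse, I will show that arcs coming from a waxing block never close up. Think of the TITO restricted to the waxing block. Each lower wall there is a cover $x\preceq y$ with $x>y$, which gives an arc from the residue of $y$ to the residue of $x$. To follow a path, I will track the actual integers: the arc from $y$ lands at $x$, which is the element immediately $\preceq$-after $y$. The next arc from $x$, if it exists, goes to the element immediately after $x$, and so on. A path of arcs therefore corresponds to a run of consecutive elements $c_0\prec c_1\prec\cdots$ of the block with $c_0<c_1<\cdots$. A cycle would mean that $c_t\equiv c_0 \pmod n$ with $c_t>c_0$, which forces $c_t\ge c_0+n$. But in a waxing block $c_0\preceq c_0+n$, and $c_t$ is at most $k\le n$ steps after $c_0$. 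Since the elements of a window are consecutive, $c_0+n$ is exactly $k$ steps after $c_0$. The hard part is to rule out $c_t\ge c_0+n$ being reached within those steps, and to avoid $c_t=c_0+n$ itself being a lower-wall step.

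To handle this, I will use Lemma~\ref{lem:windows of 312}. If an increasing run passes through $k$ consecutive elements, those elements are all distinct residues and form a window, yet $c_0+n$ must follow. Then the run's final step lands on a representative with difference at least $n$ from an earlier element that lies above it in $\preceq$, which contradicts Lemma~\ref{lem:windows of 312} or creates a $312$ pattern together with $c_0+n$. I expect this bookkeeping to be the main obstacle. As a cleaner alternative, I would use the fact that each vertex has in- and out-degree at most one, so the cycle is unique. Then I would count: the arcs from waxing covers form paths, because along a run the residues of $c_0,\dots,c_{k-1}$ are distinct, and the cover $c_{k-1}\preceq c_0+n$ forces a non-lower (upper) step somewhere in each period. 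That upper step breaks every potential cycle.

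Together, the two directions show that a cycle exists exactly when there is a waning block, and that its vertex set is the set of residues of that block.
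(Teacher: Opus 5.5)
Your waning-block direction is correct and is the same as the paper's. The converse, however, has an orientation error. A lower wall is a cover $x\preceq y$ with $x>y$, and it gives the arc $\gamma_{y,x}$. Here $x$ is the element immediately \emph{before} $y$ in $\preceq$, not after it. Your first paragraph has the same slip: $v_i$ has an outgoing arc exactly when the immediate $\preceq$-predecessor of $i$ is larger than $i$.

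So a path of arcs lifts to consecutive elements $c_0\succ c_1\succ c_2\succ\cdots$ with $c_0<c_1<c_2<\cdots$. The runs $c_0\prec c_1\prec\cdots$ with increasing values that you track consist of upper walls. This is why you hit a ``hard part,'' and as you pose it, it is false. In $[1,2,\dots,n]$ the consecutive run $1\prec 2\prec\cdots\prec n\prec n+1$ increases and reaches $c_0+n$. So neither Lemma~\ref{lem:windows of 312} nor a $312$ pattern can give the contradiction you sketch.

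With the orientation fixed, the obstacle disappears. A cycle lifts to $c_t=c_0+mn$ with $m\ge 1$ and $c_t\prec c_0$. This is impossible in a waxing block, where $c_0\prec c_0+n\prec c_0+2n\prec\cdots$. That is exactly the paper's proof: $a_1+bn\preceq a_1$ forces a waning block, and decreasing order then identifies the cycle with the one coming from the waning window.

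Your ``cleaner alternative'' is sound and is a genuinely different route, but its steps need to be stated. First, every lower wall in a waxing block with window $c_0,\dots,c_{k-1}$ is a translate of one of the $k$ covers $c_0\prec c_1,\dots,c_{k-2}\prec c_{k-1},\ c_{k-1}\prec c_0+n$. Second, these cannot all be descents, since that would give $c_0>c_0+n$. Third, each descent gives an arc from the residue of the later element to the residue of the earlier one. These residues are cyclically adjacent in the window, and all such arcs point the same way around.

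Hence the waxing arcs form a proper subgraph of a directed cycle on $k$ distinct vertices, so they are a disjoint union of directed paths. For $k=1$ the single cover is an ascent, so there is no loop. Add two facts: no cover joins the two blocks, and the waning arcs form one cycle through all waning residues. Together these give both the equivalence and the vertex-set claim. Note that ``in- and out-degree at most one'' does not by itself make the cycle unique; this block-by-block analysis does.

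Compared with the paper, this route needs more bookkeeping, but it shows directly that the waxing part of the diagram is a union of paths. The paper's lifting argument is a one-liner that uses only the defining property of a waxing block.
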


\begin{proof}
    If $\cA(\preceq)$ includes a cycle $C = (\gamma_{a_1, a_2}, \dots, \gamma_{a_k, a_1})$ of $k$ steps with $a_1 < \cdots < a_k$, then $a_{1}+ bn \preceq a_1$ for some positive integer $b$, so $\preceq$ has a waning block. By Lemmas \ref{lem:312 blocks} and \ref{lem:windows of 312}, elements in this waning block must appear in decreasing order and the difference between elements in the window is less than $n$. Thus, every pair of consecutive elements forms a lower wall and is a part of the cycle $C$, and the waning block must be exactly $[\underline{a_k, \dots, a_1}]$. 

    Conversely, if a 312-avoiding TITO $\preceq$ has a waning block $[\underline{a_k, \dots, a_1}]$, then we know that $a_1+n \preceq \cdots \preceq a_k \preceq a_1$. Hence, the arcs $\gamma_{a_1, a_2}, \dots, \gamma_{a_k, a_{1}}$ form a cycle in $\cA(\preceq)$. 
\end{proof}

To construct the inverse map, we introduce the compass labeling of an arc diagram. We also define the walker set when the diagram contains a cycle and the cut-open diagram when it is acyclic.

\begin{defi}
    Let $A = (V, E)$ be a noncrossing arc diagram with $n$ boundary points. We define its compass labeling and, when $A$ contains a cycle, its walker set.
    
    If $A$ contains a cycle $C$, then let $C^{\circ}$ denote $C$ with its interior. The \dfn{walker set} $\mathsf{W}(A)$ is the set of connected components of the ambient annulus with $C^\circ$ removed that contain at least one marked boundary point.\footnote{For an example of a connected component without a boundary point, consider the connected component created by $\gamma_{10, 11}$ in Figure \ref{fig: compass cycle}.} Each component is equipped with the marked boundary points and arcs that it contains. We call the components in $\mathsf{W}(A)$ \dfn{walkers}. 
    
    Let $V(C)$ denote the boundary points in $C$ and $E(C)$ denote the edges in $C$. The \dfn{compass labeling} $\phi : V \to \Z$ is the unique labeling satisfying:    
    \begin{enumerate}
        \item $\phi(v_a) < \phi(v_b)$ for all arcs $\gamma_{a, b} \in E \setminus E(C)$;
        \item $\phi(v_i) > 0$ for all $v_i \in V $; 
        \item for all walkers $W \in \mathsf{W}(A)$, the set $\{\phi(v_i) : v_i \in V(W) \}$ forms an integer interval;
        \item for all walkers $W \in \mathsf{W}(A)$, the sum $\sum_{v_i \in V(W)} \phi(v_i)$ is minimized, and $\phi(v_i) = i$ for every $v_i \in V(C)$.
    \end{enumerate}

    Note that conditions $(2)$ and $(4)$ together guarantee that the compass labeling is unique; conditions $(1)$ and $(3)$ place no constraint on the boundary points lying on $C$, which is why $(4)$ fixes their labels directly. 

    For each walker, the minimum compass label among its boundary points uniquely determines it. We write $W_i$ for the walker whose minimum compass label is $i$.

    If $A$ does not contain a cycle, then the compass labeling of $A$ is determined by the same restrictions above, but with $E(C) = \emptyset$ in condition (1) and with $V$ in place of $V(W)$ in conditions (3) and (4).

\end{defi}

Each walker is acyclic, and every arc with an endpoint in a walker has both endpoints in that walker. Moreover, its compass labeling satisfies $\phi(v_a) < \phi(v_b)$ for every arc $\gamma_{a,b}$ in the walker.

For a walker with $m$ boundary points in an arc diagram with a cycle $C$, its compass labels are as follows. Let $v_i$ be the first boundary point of the walker encountered when traversing the diagram clockwise from the point in $C$ with the minimal compass label. The walker's boundary points are then assigned the labels $i,i+1,\dots,i+m-1$ in clockwise order, starting from $v_i$.

In all figures below, compass labels are shown in orange.

\begin{example}\label{ex: compass labeling}
        The compass labeling of the arc diagram for $[8, 7, 9,6,12,13,15,14,16][\underline{11, 10, 5}]$ is shown in Figure \ref{fig: compass cycle}.
        \begin{figure}[H]
            \centering
            \begin{tikzpicture}[
    dot/.style={circle, fill, inner sep=1.5pt},
    arrow/.style={->, blue, thick}
]
    \def\radius{3cm}

    \draw[thick, fill=gray!15] (0,0) circle (\radius);

    \draw[thick, fill=white, draw=black] (0,0) circle (5pt);

    \foreach \n in {1,2,3,4} {
        \pgfmathsetmacro{\angle}{90 - (\n * 30)}

        \pgfmathtruncatemacro{\num}{12 + \n}
        
        \node[dot, label={[align=center]\angle:$\compass{\num}$\\$v_{\n}$}] (\n) at (\angle:\radius) {};
    }

    \foreach \n in {5,...,12} {
        \pgfmathsetmacro{\angle}{90 - (\n * 30)}
        
        \node[dot, label={[align=center]\angle:$\compass{\n}$\\$v_{\n}$}] (\n) at (\angle:\radius) {};
    }

    \draw[arrow, looseness=1.2] (10) to[out=15, in=-100] (11);

    \draw[arrow, looseness=1.2] (11) to[out=-40, in=100] (5);

    \draw[arrow, looseness=0.2] (5) to[out=145, in=-65] (10);

    \draw[arrow, looseness=0.3] (2) to[out=-100, in=130] (3);
    
    \draw[arrow, looseness=1.0] (6) to[out=115, in=-25] (9);

    \draw[arrow, looseness=0.5] (7) to[out=120, in=-20] (8);
\end{tikzpicture}
\caption{The compass labeling of the noncrossing arc diagram of the TITO $[8, 7, 9,6,12,13,15,14,16][\underline{11, 10, 5}]$ with $n = 12$.}
\label{fig: compass cycle}
\end{figure}

The walker set of this diagram, $\mathsf{W}(A) = \{W_{6}, W_{12}\}$, is shown in Figure \ref{fig: walker cycle}.
\begin{figure}[H]
    \centering
    \begin{tikzpicture}[
    dot/.style={circle, fill, inner sep=1.5pt},
    arrow/.style={blue, thick, -{Stealth[length=2mm, width=2mm]}}
]

\def\commonheight{2.2cm}

\begin{scope}
    \fill[gray!15] (-\commonheight, 0) -- (\commonheight, 0) arc (0:180:\commonheight);
    \draw[thick] (-\commonheight, 0) -- (\commonheight, 0);
    
    \draw[thick, dashed] (\commonheight, 0) arc (0:180:\commonheight);
    
    \node[dot, label=below:{$\compass{9}$}] (p9) at (-1.5,0) {};
    \node[dot, label=below:{$\compass{8}$}] (p8) at (-0.5,0) {};
    \node[dot, label=below:{$\compass{7}$}] (p7) at (0.5,0) {};
    \node[dot, label=below:{$\compass{6}$}] (p6) at (1.5,0) {};
    
    \draw[arrow] (p7.north) to[bend left=-60] (p8.north);
    \draw[arrow] (p6.north) to[bend left=-50] (p9.north);
\end{scope}
\begin{scope}[xshift=\commonheight * 2 + 2cm]
    \fill[gray!15] (-\commonheight, 0) -- (\commonheight, 0) arc (0:180:\commonheight);
    
    \draw[thick] (-\commonheight, 0) -- (\commonheight, 0);
    
    \draw[thick, dashed] (\commonheight, 0) arc (0:180:\commonheight);
    
    \node[dot, label=below:{$\compass{16}$}] (p16) at (-1.8,0) {};
    \node[dot, label=below:{$\compass{15}$}] (p15) at (-0.9,0) {};
    \node[dot, label=below:{$\compass{14}$}] (p14) at (0,0) {};
    \node[dot, label=below:{$\compass{13}$}] (p13) at (0.9,0) {};
    \node[dot, label=below:{$\compass{12}$}] (p12) at (1.8,0) {};
    
    \draw[arrow] (p14.north) to[bend left=-60] (p15.north);
\end{scope}

\end{tikzpicture}
    \caption{The walker set of the arc diagram in Figure \ref{fig: compass cycle}.}
    \label{fig: walker cycle}
\end{figure}
\end{example}

For an acyclic noncrossing arc diagram, we use its compass labeling to cut open the annulus.

\begin{defi}
For an acyclic arc diagram $A = (V, E)$ with $n$ boundary points and compass labeling $\phi$, the \dfn{cut-open diagram} of $A$ is obtained by making a cut between the boundary points with maximum and minimum compass labels.

More precisely, choose a point on the boundary segment from the point with maximum compass label to the point with minimum compass label. Draw a simple curve from this point to the inner boundary, with its interior contained in the interior of the annulus and disjoint from every arc. Cut the annulus along this curve, leaving all arcs and marked points intact. The resulting region is homeomorphic to a rectangle. The former outer boundary becomes a boundary segment on which the marked points occur in increasing compass-label order. 

We retain the compass labeling of the original arc diagram. 
\end{defi}

\begin{example}\label{ex: acyclic cut}
    Take $n=4$ and the acyclic arc diagram with the single arc $\gamma_{2,3}$. The compass labeling is the identity. Choose the slit in the clockwise gap from $v_4$ to $v_1$, as shown in Figure~\ref{fig: acyclic cut example}. After cutting open the annulus, the marked boundary points occur in the order $v_1,v_2,v_3,v_4$.

    \begin{figure}[H]
    \centering
    \scalebox{0.95}{
    \begin{tikzpicture}[
        dot/.style={circle, fill, inner sep=1.5pt},
        arrow/.style={
            blue, thick,
            -{Stealth[length=2mm, width=2mm]}
        }
    ]

    \begin{scope}
        \def\R{2.45cm}
        \def\r{0.25cm}

        \draw[thick, fill=gray!15] (0,0) circle (\R);
        \draw[thick, fill=white] (0,0) circle (\r);

        \node[dot, label={[align=center]90:
            $\compass{1}$\\$v_1$}]
            (v1) at (90:\R) {};

        \node[dot, label={[align=center]0:
            $\compass{2}$\\$v_2$}]
            (v2) at (0:\R) {};

        \node[dot, label={[align=center]270:
            $\compass{3}$\\$v_3$}]
            (v3) at (270:\R) {};

        \node[dot, label={[align=center]180:
            $\compass{4}$\\$v_4$}]
            (v4) at (180:\R) {};

        \draw[arrow]
            (v2) to[out=-110, in=20, looseness=0.95] (v3);

        \draw[thick, dashed] (132:\r) -- (132:\R);
    \end{scope}

    \node at (3.85,0) {$\longrightarrow$};

    \begin{scope}[xshift=5.35cm, yshift=-1.325cm]
        \def\W{6.6cm}
        \def\H{2.65cm}

        \fill[gray!15] (0,0) rectangle (\W,\H);

        \draw[thick] (0,\H) -- (\W,\H);
        \draw[thick] (0,0) -- (\W,0);

        \draw[thick, dashed] (0,0) -- (0,\H);
        \draw[thick, dashed] (\W,0) -- (\W,\H);

        \node[dot, label={[align=center]90:
            $\compass{1}$\\$v_1$}]
            (w1) at (0.85,\H) {};

        \node[dot, label={[align=center]90:
            $\compass{2}$\\$v_2$}]
            (w2) at (2.40,\H) {};

        \node[dot, label={[align=center]90:
            $\compass{3}$\\$v_3$}]
            (w3) at (4.00,\H) {};

        \node[dot, label={[align=center]90:
            $\compass{4}$\\$v_4$}]
            (w4) at (5.55,\H) {};

        \draw[arrow]
            (w2.south) to[bend right=42] (w3.south);
    \end{scope}

    \end{tikzpicture}
    }
    \caption{The acyclic diagram with the single arc $\gamma_{2,3}$
    and the diagram obtained by cutting open the annulus along the dashed segment.}
    \label{fig: acyclic cut example}
\end{figure}
\end{example}

\begin{example}
    The compass labeling of the arc diagram for $[8,7,9,6,11,12,15,14,16,13,10,5]$ is shown in Figure \ref{fig: compass no cycle}.

\begin{figure}[H]
            \centering
            \begin{tikzpicture}[
    dot/.style={circle, fill, inner sep=1.5pt},
    arrow/.style={->, blue, thick}
]
    \def\radius{3cm}

    \draw[thick, fill=gray!15] (0,0) circle (\radius);

    \draw[thick, fill=white, draw=black] (0,0) circle (5pt);

    \foreach \n in {1,2,3,4} {
        \pgfmathsetmacro{\angle}{90 - (\n * 30)}

        \pgfmathtruncatemacro{\num}{12 + \n}
        
        \node[dot, label={[align=center]\angle:$\compass{\num}$\\$v_{\n}$}] (\n) at (\angle:\radius) {};
    }

    \foreach \n in {5,...,12} {
        \pgfmathsetmacro{\angle}{90 - (\n * 30)}
        
        \node[dot, label={[align=center]\angle:$\compass{\n}$\\$v_{\n}$}] (\n) at (\angle:\radius) {};
    }

    \draw[arrow, looseness=1.2] (10) to[out=20, in=190] (1);

    \draw[arrow, looseness=1.2] (1) to[out=-70, in=100] (4);

    \draw[arrow, looseness=0.2] (5) to[out=145, in=-65] (10);

    \draw[arrow, looseness=0.3] (2) to[out=-100, in=130] (3);
    
    \draw[arrow, looseness=1.0] (6) to[out=115, in=-25] (9);

    \draw[arrow, looseness=0.5] (7) to[out=120, in=-20] (8);

\end{tikzpicture}
            \caption{The compass labeling of the arc diagram of the TITO $[8, 7, 9, 6, 11, 12, 15, 14, 16, 13, 10, 5]$ with $n = 12$.}
            \label{fig: compass no cycle}
        \end{figure}

    Here the compass labels are $5,6,\dots,16$. The minimum label is $\phi(v_5)=5$, and the maximum label is $\phi(v_4)=16$. Thus, we make the cut in the clockwise gap from $v_4$ to $v_5$.
\end{example}

We next introduce the subcompass path and subwalker set used in the recursive construction. We first return to the diagrams in Figures \ref{fig:arcdiagram1} and \ref{fig:arcdiagram2} to describe their walkers.

\begin{example}\label{ex:connected components}
    In Figure \ref{fig:arcdiagram1}, the TITO is $[1, 4, 3, 6, 5][\underline{7, 2}]$, its cycle is $C = (2, 7)$, and $\mathsf{W}(A) = \{W_1, W_3\}$; the walkers, with their compass labeling, are shown in Figure \ref{fig:connected components1}. 
    \begin{figure}[H]
        \centering
        \scalebox{1}{\begin{tikzpicture}[
    dot/.style={circle, fill, inner sep=1.5pt},
]

\def\commonheight{2cm}

\begin{scope}
    \fill[gray!15] (-\commonheight, 0) -- (\commonheight, 0) arc (0:180:\commonheight);
    \draw[thick] (-\commonheight, 0) -- (\commonheight, 0);
    
    \draw[thick, dashed] (\commonheight, 0) arc (0:180:\commonheight);
    
    \node[dot, label=below:$\compass{1}$] at (0,0) {};
\end{scope}

\begin{scope}[xshift=\commonheight * 2 + 2cm]
    \fill[gray!15] (-\commonheight, 0) -- (\commonheight, 0) arc (0:180:\commonheight);
    \draw[thick] (-\commonheight, 0) -- (\commonheight, 0);
    
    \draw[thick, dashed] (\commonheight, 0) arc (0:180:\commonheight);
    
    \node[dot, label=below:$\compass{6}$] (p6) at (-1.5,0) {};
    \node[dot, label=below:$\compass{5}$] (p5) at (-0.5,0) {};
    \node[dot, label=below:$\compass{4}$] (p4) at (0.5,0) {};
    \node[dot, label=below:$\compass{3}$] (p3) at (1.5,0) {};
    
    \draw[blue, thick, -{Stealth[length=2mm, width=2mm]}] (p5.north) to[bend right=70] (p6.north);
    \draw[blue, thick, -{Stealth[length=2mm, width=2mm]}] (p3.north) to[bend right=70] (p4.north);
\end{scope}

\end{tikzpicture}}

        \caption{The elements of $\mathsf{W}(A)$ for $A  = \cA(\preceq)$, where $\preceq$ is $[1, 4, 3, 6, 5][\underline{7, 2}]$.}
        \label{fig:connected components1}
    \end{figure}

    In Figure \ref{fig:arcdiagram2}, the TITO is $[1, 6, 5, 7, 4, 3][\underline{2}]$, the cycle is $C = (2)$, and $\mathsf{W}(A) = \{W_3\}$, where the compass-labeled walker is shown in Figure \ref{fig:connected components2}. Note that here $\phi(v_1) = 8$. (Otherwise, $\phi(v_3), \phi(v_4),\dots, \phi(v_1)$ do not form an integer interval.)
    \begin{figure}[H]
        \centering
        \scalebox{0.7}{\begin{tikzpicture}[
    dot/.style={circle, fill, inner sep=1.5pt},
]

    \fill[gray!15] (-1, 0) -- (6, 0) arc (0:180:3.5);
\node[dot, label=below:$\compass{8}$] (p8) at (0,0) {};
\node[dot, label=below:$\compass{7}$] (p7) at (1,0) {};
\node[dot, label=below:$\compass{6}$] (p6) at (2,0) {};
\node[dot, label=below:$\compass{5}$] (p5) at (3,0) {};
\node[dot, label=below:$\compass{4}$] (p4) at (4,0) {};
\node[dot, label=below:$\compass{3}$] (p3) at (5,0) {};

\draw[thick] (-1,0) -- (6,0);

\draw[thick, dashed] (6,0) arc (0:180:3.5);

\draw[blue, thick, -{Stealth[length=2mm, width=2mm]}] (p5.north) to[bend right=50] (p6.north);
\draw[blue, thick, -{Stealth[length=2mm, width=2mm]}] (p4.north) to[bend right=40, looseness=1.2] (p7.north);
\draw[blue, thick, -{Stealth[length=2mm, width=2mm]}] (p3.north) to[bend right=50] (p4.north);
\end{tikzpicture}}
        \caption{The elements of $\mathsf{W}(A )$ for $A = \cA(\preceq)$, where $\preceq$ is $[1, 6, 5, 7, 4, 3][\underline{2}]$.}
        \label{fig:connected components2}
    \end{figure}
\end{example}

We next introduce the subcompass path used in the recursive construction.

\begin{defi}
    Given a walker or a cut-open diagram with at least one arc, choose the boundary point with smallest compass label among those with an outgoing arc, and let $P=(v_{j_1},\ldots,v_{j_{k+1}})$ be the longest path starting at this point. We call $P$ the \dfn{subcompass path}.

    Choose a point $p$ on the open boundary, and complete $P$ to a cycle $P'$ by adjoining arcs from $v_{j_{k+1}}$ to $p$ and from $p$ to $v_{j_1}$. Let $P^\circ$ denote $P'$ together with its interior. The connected components of the complement of $P^\circ$ that contain at least one boundary point, equipped with the boundary points, arcs, and inherited compass labeling that they contain, are called \dfn{subwalkers}.
\end{defi}

\begin{example}
    For the walker $W_3$ in Figure~\ref{fig:connected components2}, the subcompass path separates the remaining boundary points into the two subwalkers shown in Figure~\ref{fig: subwalkers}, whose minimum compass labels are $5$ and $8$. These are the connected components of $W_3 \setminus P^{\circ}$ that contain boundary points, where $P^{\circ}$ is the closed region bounded by the green and maroon arcs in Figure~\ref{fig: P circ}.
    
    \begin{figure}[H]
        \centering
        \scalebox{0.7}{\begin{tikzpicture}[
    dot/.style={circle, fill, inner sep=1.5pt},
]
    \fill[gray!15] (-1, 0) -- (6, 0) arc (0:180:3.5);

\node[dot, label=above:$p$] (p0) at (2.5,3.5) {};
\node[dot, label=below:$\compass{8}$] (p8) at (0,0) {};
\node[dot, label=below:$\compass{7}$] (p7) at (1,0) {};
\node[dot, label=below:$\compass{6}$] (p6) at (2,0) {};
\node[dot, label=below:$\compass{5}$] (p5) at (3,0) {};
\node[dot, label=below:$\compass{4}$] (p4) at (4,0) {};
\node[dot, label=below:$\compass{3}$] (p3) at (5,0) {};

\draw[thick] (-1,0) -- (6,0);

\draw[thick, dashed] (6,0) arc (0:180:3.5);

\draw[OliveGreen, thick, -{Stealth[length=2mm, width=2mm]}] (p7.north) to[bend left=50] (p0.south);

\draw[OliveGreen, thick, -{Stealth[length=2mm, width=2mm]}] (p0.south) to[bend left=50] (p3.north);

\draw[blue, thick, -{Stealth[length=2mm, width=2mm]}] (p5.north) to[bend right=50] (p6.north);
\draw[Maroon, thick, -{Stealth[length=2mm, width=2mm]}] (p4.north) to[bend right=40, looseness=1.2] (p7.north);
\draw[Maroon, thick, -{Stealth[length=2mm, width=2mm]}] (p3.north) to[bend right=50] (p4.north);
\end{tikzpicture}}
        \caption{The cycle $P'$ of the walker in Figure \ref{fig:connected components2} is colored in green and maroon, where the maroon edges denote the subcompass path $P$.}
        \label{fig: P circ}
    \end{figure}
    
    \begin{figure}[H]
        \centering
        \scalebox{1}{\begin{tikzpicture}[
    dot/.style={circle, fill, inner sep=1.5pt},
]

\def\commonheight{2cm}
\begin{scope}
    \fill[gray!15] (-\commonheight, 0) -- (\commonheight, 0) arc (0:180:\commonheight);
    \draw[thick] (-\commonheight, 0) -- (\commonheight, 0);
    
    \draw[thick, dashed] (\commonheight, 0) arc (0:180:\commonheight);
    
    \node[dot, label=below:$\compass{6}$] (p6) at (-0.73,0) {};
    \node[dot, label=below:$\compass{5}$] (p5) at (0.73,0) {};
    
    \draw[blue, thick, -{Stealth[length=2mm, width=2mm]}] (p5.north) to[bend right=70] (p6.north);
\end{scope}
\begin{scope}[xshift=\commonheight * 2 + 2cm]
    \fill[gray!15] (-\commonheight, 0) -- (\commonheight, 0) arc (0:180:\commonheight);
    \draw[thick] (-\commonheight, 0) -- (\commonheight, 0);
    
    \draw[thick, dashed] (\commonheight, 0) arc (0:180:\commonheight);
    
    \node[dot, label=below:$\compass{8}$] at (0,0) {};
\end{scope}

\end{tikzpicture}}
        \caption{Subwalkers of the walker in Figure \ref{fig:connected components2}.}
        \label{fig: subwalkers}
    \end{figure}
\end{example}

The recursive algorithm below applies to a walker or a cut-open diagram equipped with its compass labeling. For a cut-open diagram, we form the subwalkers using the compass-label intervals specified in Algorithm \ref{algo:support}.

\subsection{Constructing TITOs from noncrossing arc diagrams}\label{subsec:inverse construction}As mentioned, Barkley and Defant \cite{BD25+} showed how to
construct the arc diagram of a TITO and, specializing a theorem of Barkley \cite{B25+}, proved that this gives a bijection between $312$-avoiding TITOs and noncrossing arc diagrams. We now give the inverse construction. Given a noncrossing arc diagram, we build the corresponding $312$-avoiding TITO as follows.
\begin{enumerate}
    \item If the diagram is acyclic, apply the recursive subarray construction to its cut-open diagram. This produces the window of a single waxing block.

    \item If the diagram contains a cycle, write the compass labels of the boundary points on the cycle in decreasing order. This produces the window of the waning block. Form the walker set and apply the recursive subarray construction to each walker.
    Concatenate the resulting subarrays in increasing order of minimum compass label. This produces the window of the waxing block, if one exists.
\end{enumerate}

Let $I$ be a waxing block represented in window notation as $[a_1, \dots, a_{|I|}]$. We say that a word is a \dfn{waxing window subarray} if it is a contiguous subarray of $[a_1, \dots, a_{|I|}]$.

We first describe the recursive subarray construction, which produces a $312$-avoiding waxing window subarray from a walker or a cut-open diagram and its compass labeling. The construction begins with the subcompass path, writes its compass labels in decreasing order, and then recursively constructs and concatenates the subarrays arising from the subwalkers.

\begin{algo}\label{algo:support}
    Let $W_j$ be either a walker or a cut-open diagram. Let $\phi$ be its compass labeling. Write its $m$ boundary points as $v_{i+1},\dots,v_{i+m}$ in clockwise order, with vertex subscripts taken modulo $n$.\footnote{One can check their compass labels are consecutive and increasing, and every arc runs from a smaller compass label to a larger one.} We construct a waxing window subarray $\mathcal{W}_j$ as follows.
    \begin{enumerate}[label=(\arabic*)]
        \item If $E(W_j)=\emptyset$, return $\mathcal{W}_j=[\phi(v_{i+1}),\dots,\phi(v_{i+m})].$

        \item Otherwise, let $P=(v_{j_1},\dots,v_{j_{k+1}})$ be the subcompass path.

        \item Initialize $\mathcal{W}_j= [\phi(v_{j_{k+1}}),\phi(v_{j_k}),\dots,\phi(v_{j_1})].$

        \item Form the subwalkers from the nonempty compass-label intervals
        $$[\phi(v_{i+1}),\phi(v_{j_1})-1], \quad[\phi(v_{j_t})+1,\phi(v_{j_{t+1}})-1]
                \quad (1\le t\le k), \quad [\phi(v_{j_{k+1}})+1,\phi(v_{i+m})].
        $$

        \item For each such subwalker, let $W_\ell'$ denote the one whose minimum compass label is $\ell$. Set
        $$
            \mathcal{L}=\{W_\ell':\ell<\phi(v_{j_{k+1}})\}, \qquad \mathcal{R}=\{W_\ell':\ell>\phi(v_{j_{k+1}})\}.
        $$

        \item Process the subwalkers in $\mathcal{L}$ in decreasing order of minimum compass label. For each one, apply this algorithm recursively, and concatenate the resulting subarray to the left of $\mathcal{W}_j$.

        \item Process the subwalkers in $\mathcal{R}$ in increasing order of minimum compass label. For each one, apply this algorithm recursively, and concatenate the resulting subarray to the right of $\mathcal{W}_j$.

        \item Return $\mathcal{W}_j$.
    \end{enumerate}
\end{algo}

We now prove the correctness of our algorithm by showing that the lower walls of $\mathcal{W}_j$ correspond exactly to the arcs in $W_j$ and that $\mathcal{W}_j$ is 312-avoiding.

\begin{proof}
We prove both claims by strong induction on $|E(W_j)|$, where $W_j$ is either a walker or a cut-open diagram. 

When $E(W_j) = \emptyset$, we return $\mathcal{W}_j = [\phi(v_{i+1}),\dots,\phi(v_{i+m})]$. Its entries are strictly increasing and contain each compass label exactly once.
Thus, $\mathcal{W}_j$ has no lower walls and is $312$-avoiding.

Now suppose $E(W_j) \neq \emptyset$, and assume the assertions hold for every input with fewer arcs than $W_j$. Let $P = (v_{j_1},\dots,v_{j_{k+1}})$ be the subcompass path, and set $a_t = \phi(v_{j_t})$ for $1 \leq t \leq k+1$. Since compass labels increase along arcs, we have $a_1 < \cdots < a_{k+1}$.

The initial vertex of $P$ has no incoming arc by the choice of $a_1$, and the final vertex has no outgoing arc by the maximality of $P$.
Since each boundary point has at most one incoming arc and at most one outgoing arc, no arc outside $P$ has an endpoint on $P$.

The subwalkers are formed from the nonempty integer intervals
    $$[\phi(v_{i+1}),a_1-1], \qquad [a_t+1,a_{t+1}-1] \quad (1 \leq t \leq k), \qquad [a_{k+1}+1,\phi(v_{i+m})],$$
with the inherited arcs and compass labeling. Every remaining arc has both endpoints in one interval. Indeed, an arc joining different intervals would either have its source at a label smaller than $a_1$ or cross an arc of $P$, contradicting the choice of $a_1$ or noncrossing.

Thus, every arc of $W_j$ lies in $P$ or in a single subwalker. Each subwalker inherits the hypotheses of the algorithm and has fewer arcs than $W_j$. The intervals above, together with the labels on $P$, partition the label set of $W_j$.

The nonempty ones among the first $k+1$ intervals give the subwalkers in $\mathcal{L}$, and the last interval, if nonempty, gives the subwalker in $\mathcal{R}$. Every label in one interval is smaller than every label in a later interval.

From left to right, $\mathcal{W}_j$ consists of the subarrays from $\mathcal{L}$ in increasing order of minimum label, followed by $[a_{k+1},\dots,a_1]$, followed by the subarrays from $\mathcal{R}$ in increasing order of minimum label. For $\mathcal{L}$, this order follows because the subarrays are prepended in decreasing order of minimum label.

We now show that the lower walls of $\mathcal{W}_j$ correspond exactly to the arcs of $W_j$. By the inductive hypothesis, the lower walls within each recursive subarray correspond exactly to the arcs of its subwalker. The lower walls within $[a_{k+1},\dots,a_1]$ correspond exactly to the arcs of $P$. It remains to check the points of concatenation.

Between consecutive recursive subarrays, every label in the earlier one is smaller than every label in the later one. The subarray arising from $P$ begins with $a_{k+1}$, which is greater than every label in the subarrays from $\mathcal{L}$, and ends with $a_1$, which is smaller than every label in the subarrays from $\mathcal{R}$.
Thus, no additional lower wall occurs at a point of concatenation.

It remains to show that $\mathcal{W}_j$ is $312$-avoiding. Suppose for contradiction that entries $x,y,z$, in this order in $\mathcal{W}_j$, satisfy $x > z > y$. We consider the subarray containing $y$.

Suppose $y$ lies in a subarray $S$ arising from $\mathcal{L}$. Since all earlier subarrays have smaller labels, $x$ must also lie in $S$.
The entry $z$ cannot lie in $S$ by the inductive hypothesis or in a later recursive subarray, whose labels are all larger than $x$.
Thus, $z$ lies in $[a_{k+1},\dots,a_1]$. However, no label of $P$ lies between two labels of $S$, contradicting $y < z < x$.

Suppose $y$ lies in $[a_{k+1},\dots,a_1]$. If $z$ lies later in this strictly decreasing subarray, then $z < y$. Otherwise, $z$ lies in a subarray from $\mathcal{R}$, so $z > a_{k+1} \geq x$. Both possibilities contradict $x > z > y$.

Finally, suppose $y$ lies in a subarray $S$ arising from $\mathcal{R}$. All earlier subarrays have smaller labels, so $x$ lies in $S$. All later subarrays have larger labels, so $z$ also lies in $S$. This contradicts the inductive hypothesis.

Therefore, $\mathcal{W}_j$ is $312$-avoiding, as desired.
\end{proof}

\begin{example}\label{ex: walker}
    Applying Algorithm~\ref{algo:support} to the walker shown in Figure \ref{fig:connected components2}, we obtain the subcompass path $P = (v_3, v_4, v_7)$. Thus, the initial subarray is $[7,4,3]$.

    The window subarrays we obtain from subwalkers in $\{W_5', W_8'\}$ are $[6, 5]$ and $[8]$, respectively (see Figure \ref{fig: subwalkers}). Concatenation yields the window subarray $[6,5,7,4,3,8]$.
\end{example}

\begin{example}
    Applying Algorithm~\ref{algo:support} to the cut-open diagram shown in Figure \ref{fig: acyclic cut example}, we obtain the subcompass path $P=(v_2,v_3)$. Thus, the initial subarray is $[3,2]$.

    The window subarrays we obtain from subwalkers in $\{W_1', W_4'\}$ are $[1]$ and $[4]$, respectively.
    Concatenation yields the window $[1,3,2,4].$
    
    The cut separates the labels $4$ and $1$, so they are processed on opposite sides of the subcompass-path subarray. This example shows why, in the acyclic case, one must first cut the annulus at the seam between the maximum and minimum compass labels and then apply the recursive construction to the resulting diagram.
\end{example}

Recall that 312-avoiding TITOs correspond exactly to noncrossing arc diagrams by Theorem \ref{thm:arcdiagram312}. We are now ready to describe the general algorithm to construct a 312-avoiding TITO from a noncrossing arc diagram $A$.

\begin{algo}\label{algo: inverse construction}
    Given a noncrossing arc diagram $A$ with compass labeling $\phi$, our algorithm returns the window notation of the corresponding $312$-avoiding TITO.
    \begin{enumerate}[label=(\arabic*)]
        \item Initialize $\mathcal{W}_{\mathrm{wan}}=\mathcal{W}_{\mathrm{wax}}=[].$

        \item If $A$ is acyclic, let $m=\min\{\phi(v_i):v_i\in V(A)\}.$
        \begin{enumerate}[label=(\alph*)]
            \item Cut the diagram between the boundary points with compass labels $m+n-1$ and $m$ to form the cut-open diagram.
            \item Apply Algorithm~\ref{algo:support} to the cut-open diagram. Let $\mathcal{W}_{\mathrm{wax}}$ be the resulting subarray. Return the TITO with this single waxing window.
        \end{enumerate}

        \item Otherwise, let $C$ be the cycle of $A$.

        \item Form $\mathcal{W}_{\mathrm{wan}}$ by writing the compass labels of the boundary points of $C$ in decreasing order.

        \item For each walker $W_i\in\mathsf{W}(A)$, apply Algorithm~\ref{algo:support}, and denote its output by $\mathcal{W}_i$.

        \item In increasing order of subscript, concatenate the subarrays $\mathcal{W}_i$ to the right of $\mathcal{W}_{\mathrm{wax}}$.

        \item Return the TITO given by $\mathcal{W}_{\mathrm{wax}}\mathcal{W}_{\mathrm{wan}}$.
    \end{enumerate}
\end{algo}

We prove the correctness of our algorithm by showing that the TITO $\preceq$ we construct has $A=\mathcal{A}(\preceq)$ and is $312$-avoiding.

\begin{proof}
    We consider two cases depending on whether $A$ contains a cycle.

    First, suppose $A$ does not contain a cycle. By the correctness of Algorithm \ref{algo:support}, the returned subarray $\mathcal{W}_{\text{wax}}$ contains each compass label exactly once, is $312$-avoiding, and has lower walls corresponding exactly to the arcs of $A$. Its entries represent every residue class modulo $n$, so it defines a single waxing window. Moreover, since these entries are $n$ consecutive integers, every entry in one translate of the window is smaller than every entry in the next translate. Thus, no lower wall occurs between successive translates. Any $312$ pattern must also lie in a single translate, since its first entry is its largest. Therefore, $A = \mathcal{A}(\preceq)$ and $\preceq$ is $312$-avoiding. This includes the case $E(A) = \emptyset$, for which the returned window is $[1,2,\dots,n]$.

    Now, suppose $A$ contains a cycle $C$. For each $v_i \in V(C)$, the compass labeling $\phi(v_i) = i$ by definition. Writing these labels in decreasing order gives a strictly decreasing waning block. Its lower walls correspond exactly to the arcs of $C$, including the lower wall between successive translates of its window. If the waxing window is empty, then the resulting TITO is strictly decreasing, so both claims hold.

    Suppose the waxing window is nonempty. Each walker occupies a clockwise gap between successive vertices of $C$. Its compass labels form an integer interval and increase in clockwise order.
    Moreover, all walker labels lie in an interval of $n$ consecutive integers. If $v_1$ or $v_n$ lies on $C$, then all compass labels lie in $[1,n]$. Otherwise, let $c$ be the smallest index of a vertex on $C$. The walker containing $v_n$ and $v_1$ receives labels extending through $n+c-1$, and all compass labels lie in $[c,n+c-1]$.

    We first show $A = \mathcal{A}(\preceq)$. By the correctness of Algorithm \ref{algo:support}, the lower walls within each $\mathcal{W}_i$ correspond exactly to the arcs of $W_i$. If $a < b$, then every label in $\mathcal{W}_a$ is smaller than every label in $\mathcal{W}_b$. Thus, concatenating these subarrays in increasing order of subscript does not create a lower wall at any point of concatenation. Since all entries of $\mathcal{W}_{\text{wax}}$ lie in an interval of $n$ consecutive integers, no lower wall occurs between successive translates of the waxing window either. Finally, there are no cover relations between the two blocks, since the waxing block has no maximal element and the waning block has no minimal element. Together with the lower walls of the waning block, this gives $A = \mathcal{A}(\preceq)$.

    We now prove that $\preceq$ is $312$-avoiding. Within the waxing block, the subarrays $\mathcal{W}_i$ and their translates occur in increasing order of their disjoint label intervals. Since the first entry of a $312$ pattern is its largest, any such pattern must lie in a single translated subarray. This is impossible by the correctness of Algorithm \ref{algo:support}. The waning block is strictly decreasing, so it also avoids $312$.

    Finally, suppose for contradiction that a $312$ pattern uses both blocks. Write its entries in TITO order as $x \preceq y \preceq z$, where $x > z > y$. Since the waning block is decreasing and follows the waxing block, $x$ and $y$ must lie in the waxing block, and $z$ must lie in the waning block. The inversion $x > y$ forces $x$ and $y$ to lie in the same translated subarray $\mathcal{W}_i$. The labels in this subarray form an integer interval, so every integer between $y$ and $x$ belongs to the waxing block. This contradicts $y < z < x$ with $z$ in the waning block.

    Thus, the returned TITO is $312$-avoiding and has arc diagram $A$.
\end{proof}

\begin{example}
    We apply the algorithm to the noncrossing arc diagram shown in Figure \ref{fig:arcdiagram2}. From the cycle, we get the waning block $[\underline{2}]$. From the walkers, we obtain the waxing window $[6,5,7,4,3,8]$ shown in Example \ref{ex: walker}. Thus, the TITO $[6,5,7,4,3,8][\underline{2}]$ corresponds to the noncrossing arc diagram in Figure \ref{fig:arcdiagram2}, as desired. Note that this is the same TITO as $[1, 6, 5, 7, 4, 3][\underline{2}]$, written using a different choice of window for its waxing block.
\end{example}

We apply Algorithm \ref{algo: inverse construction} in \S\ref{subsubsec: arc diagram approach} to enumerate $(312,p)$-avoiding TITOs for $p \in \{3214, 2314\}$, by translating pattern-avoidance conditions on the TITO into structural conditions on its noncrossing arc diagram.

\section{TITOs avoiding $p, q \in S_3$}\label{sec: results length 3}

In this section, we count TITOs avoiding any two length $3$ patterns. Although there are $\binom{6}{2} = 15$ pairs of patterns, we need to consider only $6$ cases. This reduction follows from the antiautomorphisms $\Psi_{\leftrightarrow}$ and $\Psi_{\updownarrow}$, where $\Psi_{\leftrightarrow}$ reverses each TITO and $\Psi_{\updownarrow}$ is induced by the map $\Z \to \Z$ taking $x \mapsto -x$. Applying these antiautomorphisms shows that $f_n^{(p,q)}$ depends only on the following $6$ cases, up to relabeling $(p,q)$:
\begin{enumerate}[label=(\alph*)]
    \item $f_n^{(312, 123)} = f_n^{(213, 321)} = f_n^{(132, 321)} = f_n^{(231, 123)}$;
    \item $f_n^{(312, 132)} = f_n^{(213, 231)}$;
    \item $f_n^{(312, 231)} = f_n^{(213, 132)}$;
    \item $f_n^{(312, 213)} = f_n^{(132, 231)}$;
    \item $f_n^{(312, 321)} = f_n^{(213, 123)} = f_n^{(132, 123)} = f_n^{(231, 321)}$;
    \item $f_n^{(123, 321)}$.
\end{enumerate}
We treat each case separately, and prove the following theorem.

\begin{thm}\label{thm: enumerate 2 length 3}
    For $n \geq 1$, the cases above have the following enumerations:
    \begin{enumerate}[label=(\alph*)]
        \item $f_n^{(312, 123)} = f_n^{(213, 321)} = f_n^{(132, 321)} = f_n^{(231, 123)} = 1$;
        \item $f_n^{(312, 132)} = f_n^{(213, 231)} = 2$;
        \item $f_n^{(312, 231)} = f_n^{(213, 132)} = 2^n$;
        \item $f_n^{(312, 213)} = f_n^{(132, 231)} = 2^n$;
        \item $f_n^{(312, 321)} = f_n^{(213, 123)} = f_n^{(132, 123)} = f_n^{(231, 321)} = 2^n - 1$;
        \item $f_n^{(123, 321)} = 0$.
    \end{enumerate}
\end{thm}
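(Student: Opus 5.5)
We would split every $312$-avoiding TITO by its block structure, using Lemma~\ref{lem:312 blocks}. There are three types: (i) a single waxing block, (ii) a single waning block, and (iii) a waxing block followed by a waning block. Accordingly $f_n^{(312,q)} = a_n^{(312,q)} + b_n^{(312,q)}$, where $b_n$ counts types (ii) and (iii). By the symmetries $\Psi_{\leftrightarrow},\Psi_{\updownarrow}$ listed above, it suffices to treat one representative in each of (a)--(f).

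Two observations dispose of several cases at once. A waxing block contains $x \prec x+n \prec x+2n$, which is a $123$. A waning block contains $x+n \prec x \prec x-n$, which is a $321$. Hence:
\begin{enumerate}[label=(\arabic*)]
\item Case (f) is immediate: every TITO has at least one block, so $f_n^{(123,321)}=0$.
\item In case (a), a $(312,123)$-avoiding TITO has no waxing block. By Lemma~\ref{lem:312 blocks} it is then a single waning block whose elements appear in decreasing order. So it is the reverse of the usual order on $\Z$, which gives $f_n=1$.
\item In case (e), avoiding $321$ forbids a waning block. So $f_n^{(312,321)}=a_n^{(312,321)}$ counts affine permutations.
\end{enumerate}
For case (e), we pass to inverses via Lemma~\ref{lem:inversion}, turning $(312,321)$ into $(231,321)$. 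We then apply Lemma~\ref{lem: formula affine avoid n---} with $p=321$, so that $p_{[2:3]}=21$ and $s_m^{(231,21)}=1$. This gives
$a_n=\sum_{j=0}^{n-1}(n-j)s_j^{(231,321)}$, with $s_0=1$ and $s_j=2^{j-1}$ by Lemma~\ref{lem:pattern avoiding perm}. A telescoping computation yields $2^n-1$; the values $1,3,7$ for $n=1,2,3$ check out. One point must be verified carefully here: that the Barkley--Defant identification of single-waxing-block TITOs with affine permutations carries TITO pattern avoidance of $p$ to affine-permutation avoidance of $p$ or of $p^{-1}$. We would read this off from the window notation, where the TITO order lists $\omega^{-1}$ values.

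For cases (b), (c), (d), the affine part is handled the same way: the pair becomes $(231,p^{-1})$, and we use the appropriate formula from Lemmas~\ref{lem: formula affine avoid n---}--\ref{lem: formula affine avoid ---n} together with Lemma~\ref{lem: generating function perm}. For example, $132^{-1}=132$ has $p_1=1,p_2=3$, so Lemma~\ref{lem: formula affine avoid 1n--} applies with $s^{(231,21)}$ and $s^{(231,1)}$. This collapses the sum to very few terms and is consistent with the total $f_n=2$.

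The projective part is where we expect the main work. Here we would use the bijection $\cA$ of Theorem~\ref{thm:arcdiagram312} together with Algorithm~\ref{algo: inverse construction}. The TITO has waning window equal to the cycle labels in decreasing order, preceded by the concatenated walker subarrays. We would translate the second pattern into structural conditions on the diagram:
\begin{itemize}
\item For $132$ and $231$, any inversion inside the waxing window combined with a waning element larger or smaller than it produces the forbidden pattern. This should force the walkers to be arcless, or the cycle to be essentially everything.
\item For $213$, the condition should force each walker to have at most one nested path, or similar.
\end{itemize}
We would then count cycles as subsets of $[n]$ with the induced walker constraints, aiming at $b_n$ values such as $2^n-(\text{affine count})$ in (c) and (d). The hardest step is getting these arc-diagram characterizations exactly right for (c) and (d), especially the interaction between a walker and the cycle across the seam of translates. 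We would first verify them against brute-force small cases ($n\le 3$) before writing the general argument.
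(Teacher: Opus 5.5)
You handle (a), (e), (f) correctly and in the same way as the paper. Your single-waxing-block counts via Lemmas~\ref{lem: formula affine avoid n---}--\ref{lem: formula affine avoid ---n} also work; they give $a_n=1$, $2^n-1$, $1$ in (b), (c), (d). Your worry about $p$ versus $p^{-1}$ is harmless, since by Lemma~\ref{lem:inversion} inversion is a bijection from $(p,q)$-avoiders to $(p^{-1},q^{-1})$-avoiders.

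The gap is the projective part of (b)--(d). There you give only an arc-diagram plan, and the conditions you guess are wrong. Take $213$ first. Any arc in a walker is a lower wall $(a,b)$ of the waxing block, and Lemma~\ref{lem:windows of 312} gives $a<b<a+n$. So $b\preceq a\preceq a+n$ is already a $213$. The walkers must therefore be arcless, i.e.\ the waxing block is increasing; ``at most one nested path'' would overcount. Conversely, an increasing waxing block followed by a decreasing waning block avoids both $312$ and $213$. Such a TITO is determined by the set of residues in its waning block, which gives $2^n$ in (d) directly, counting $[1,\dots,n]$ and $[\underline{n,\dots,1}]$.

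For $132$ and $231$, arcless walkers are not enough. With $n=2$, $[1][\underline{2}]$ has no walker arcs, yet it contains the $132$ given by $1\preceq 4\preceq 2$ and the $231$ given by $1\preceq 2\preceq 0$. The missing observation is that no TITO with a waxing block followed by a waning block avoids either pattern. If $x$ is in the waxing block and $y$ is in the waning block, then $x\preceq y\preceq y-n$. Choosing $y$ with $y-n>x$ gives a $132$, and choosing $y$ with $y-n<x<y$ gives a $231$. So in (b) and (c) the only projective TITO is $[\underline{n,\dots,1}]$. Combined with your affine counts, this gives $f_n=1+1=2$ and $f_n=(2^n-1)+1=2^n$. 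This is how the paper argues, and none of it needs the arc-diagram machinery.
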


We consider the cases in the order they are listed above.

\begin{prop}
    The number of $(312, 123)$-avoiding TITOs is $f_n^{(312, 123)}= 1.$ Specifically, the only $(312, 123)$-avoiding $n$-TITO is $[\underline{n, \dots, 1}]$.
\end{prop}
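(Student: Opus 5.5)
Since $\preceq$ avoids $312$, Lemma~\ref{lem:312 blocks} restricts it to one of three block structures: a single waxing block, a single waning block, or a waxing block followed by a waning block. The plan is to show that avoiding $123$ as well rules out every waxing block. The waning block is then the only block, so it has size $n$. Lemma~\ref{lem:312 blocks} says its elements appear in decreasing order, which forces it to be $[\underline{n,\dots,1}]$. Finally, we check directly that this TITO avoids both patterns.

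\emph{Excluding a waxing block.} Suppose $\preceq$ has a waxing block $I$, and pick any $x \in I$. Waxing gives $x \preceq x+n \preceq x+2n$. Since $x < x+n < x+2n$, these three integers form a $123$ pattern, a contradiction. So $\preceq$ has no waxing block. This is the only substantive step, and it is immediate.

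\emph{Identifying the TITO.} With no waxing block, $\preceq$ consists of a single waning block of size $n$, and its elements appear in decreasing order. A decreasing total order of $\Z$ is unique: it is $\cdots \preceq 2 \preceq 1 \preceq 0 \preceq \cdots$. Taking $n$ consecutive elements as the window gives $[\underline{n, \dots, 1}]$.

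\emph{Verification.} In this order, $a \preceq b$ holds exactly when $a \geq b$. For $i_1<i_2<i_3$ we therefore have $i_3 \prec i_2 \prec i_1$. So every triple appears as $321$, and neither $312$ nor $123$ occurs. Hence $f_n^{(312,123)}=1$ for all $n\ge 1$. There is no real obstacle here; the one point that needs care is noting that the decreasing order is unique, so the answer does not depend on the choice of window.
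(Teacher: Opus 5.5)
Your proposal is correct and follows essentially the same route as the paper: rule out any waxing block by avoidance of $123$, then apply Lemma~\ref{lem:312 blocks} to force the single decreasing waning block $[\underline{n,\dots,1}]$ and check that it avoids both patterns. Your explicit witness $x \preceq x+n \preceq x+2n$ supplies the one-line justification that the paper leaves implicit.
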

\begin{proof}
    A $(312, 123)$-avoiding TITO cannot have any waxing blocks. By Lemma \ref{lem:312 blocks}, the TITO must be $[\underline{n, \dots, 1}]$, which is $(312, 123)$-avoiding. Thus, $f_n^{(312, 123)} = 1.$
\end{proof}

\begin{prop}
    The number of $(312, 132)$-avoiding TITOs is $f_n^{(312, 132)}= 2.$ Specifically, the only $(312, 132)$-avoiding $n$-TITOs are $[\underline{n, \dots, 1}]$ and $[1, \dots, n]$.
\end{prop}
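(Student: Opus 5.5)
The plan is to use the block classification of $312$-avoiding TITOs (Lemma~\ref{lem:312 blocks}) and then show that the extra condition of avoiding $132$ kills every configuration except the two listed. By that lemma, a $312$-avoiding TITO $\preceq$ is of one of three types: a single waxing block; a single waning block whose elements appear in decreasing order; or a waxing block followed by a waning block whose elements appear in decreasing order. I treat these three types in turn.

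\emph{Two blocks.} Pick any $x$ in the waxing block. Since the waning block is a union of residue classes, it contains integers $y<z$ with $x<y$. The waning block is decreasing, so $z \preceq y$. The waxing block precedes the waning block, so $x \preceq z \preceq y$. With $x<y<z$ this is exactly a $132$ pattern. Hence no $(312,132)$-avoiding TITO has two blocks.

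\emph{Single waning block.} The whole order is decreasing, so it equals $[\underline{n,\dots,1}]$. Every triple then appears in the order $321$, so this TITO avoids both $312$ and $132$.

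\emph{Single waxing block.} I claim that $\preceq$ coincides with the usual order $<$. Suppose instead that there is an inversion $a<b$ with $b \prec a$. Consider the translates $a-kn$ for $k \ge 1$.
\begin{itemize}
\item Because the block is waxing, each of them satisfies $a-kn \prec a$.
\item By the finiteness condition in the definition of a block, only finitely many elements lie in the interval from $b$ to $a$. So for $k$ large, $a-kn$ is not in that interval, and therefore $a-kn \prec b$.
\end{itemize}
This gives $a-kn \prec b \prec a$ with $a-kn<a<b$, which is a $132$ pattern, a contradiction. Thus $\preceq$ is the natural order $[1,\dots,n]$, whose triples all appear in the order $123$, so it avoids both patterns.

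Combining the three cases gives exactly two TITOs, so $f_n^{(312,132)}=2$. The only mildly delicate point is the waxing case, where the finiteness property of blocks must be used to push a translate of $a$ before $b$. The rest is direct.
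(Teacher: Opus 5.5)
Your proof is correct and follows the paper's argument. You use Lemma~\ref{lem:312 blocks} to reduce to three block configurations, rule out the waxing-plus-waning case by $132$-avoidance, and show the single-block cases are forced to be $[\underline{n,\dots,1}]$ and $[1,\dots,n]$, while also supplying the explicit $132$ occurrences (including the finiteness-of-intervals argument in the waxing case) that the paper asserts without detail.
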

\begin{proof}    
    A TITO avoiding $132$ cannot have two blocks such that the first is waxing and the second is waning. Thus, by Lemma \ref{lem:312 blocks}, any $(312 , 132)$-avoiding TITO consists of exactly one block. If the block is waning, then the TITO must be $[\underline{n, \dots, 1}]$. If the block is waxing, then the terms in the block must be increasing to avoid $132$, so the TITO must be $[1, \dots, n]$. Since $[\underline{n, \dots, 1}]$ and $[1, \dots, n]$ avoid $312$ and $132$, we have $f_n^{(312, 132)} = 2$.
\end{proof}

\begin{prop}\label{prop: 312 231}
    The number of $(312, 231)$-avoiding TITOs is $f_n^{(312, 231)} = 2^n.$
\end{prop}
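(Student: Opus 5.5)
The plan is to split by the block structure given in Lemma~\ref{lem:312 blocks}. A $312$-avoiding TITO is of one of three kinds: a single waning block, a single waxing block, or a waxing block followed by a waning block. I would count the $(312,231)$-avoiding TITOs of each kind separately and then add.

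\emph{Single waning block.} By Lemma~\ref{lem:312 blocks} the elements appear in decreasing order, so the TITO is $[\underline{n,\dots,1}]$. In this order $i\preceq j$ exactly when $i>j$, so the only pattern it contains is $321$. It therefore avoids $231$ and contributes exactly $1$.

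\emph{Two blocks.} I would show there are none. Recall that three integers $a<b<c$ form a $231$-pattern when $b\preceq c\preceq a$. Take any $x$ in the waxing block, set $b=x$, and choose elements $a<x<c$ of the waning block; these exist because the waning block contains every translate of its residues. Since the waxing block precedes the waning block, $x\preceq c$. Since the waning block is decreasing and $a<c$, we get $c\preceq a$. Hence $x,c,a$ give a $231$-pattern, so this kind contributes $0$.

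\emph{Single waxing block.} Such TITOs are exactly the affine permutations, so this count is $a_n^{(312,231)}$. Because the definitions of pattern avoidance for TITOs and for affine permutations are inverse to each other, I need to fix the convention correctly. Passing to $\omega^{-1}$ via Lemma~\ref{lem:inversion}, with $\{312,231\}$ closed under inversion, reduces the count to $a_n^{(231,312)}$ in either convention. I would then apply Lemma~\ref{lem: formula affine avoid n---} with $p=312$, for which $p_1=3=k$ and $p_{[2:3]}$ standardizes to $12$:
\[
a_n^{(231,312)}=\sum_{j=0}^{n-1}(n-j)\,s_j^{(231,312)}\,s_{n-j-1}^{(231,12)}.
\]
Here $s_m^{(231,12)}=1$ for every $m$, since only the decreasing permutation avoids $12$. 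By Lemma~\ref{lem:pattern avoiding perm}, $s_j^{(231,312)}=2^{j-1}$ for $j\ge1$, and $s_0=1$. So
\[
a_n = n+\sum_{j=1}^{n-1}(n-j)2^{j-1} = n+(2^n-n-1) = 2^n-1.
\]
The last sum is a routine geometric computation, or it can be read off from Lemma~\ref{lem: generating function perm} by multiplying generating functions. As checks, $n=2$ gives $3$ and $n=3$ gives $7$.

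Adding the three cases gives $f_n^{(312,231)}=(2^n-1)+0+1=2^n$. The main obstacle I expect is the waxing case: specifically, matching the TITO avoidance convention to the affine-permutation convention (the footnote warns that the definition looks reversed), and confirming that the hypotheses of Lemma~\ref{lem: formula affine avoid n---} apply after passing to inverses. The other two cases are direct.
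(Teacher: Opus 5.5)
Your proof is correct and follows essentially the same route as the paper. In both, the two-block case is excluded by $231$, the lone waning block $[\underline{n,\dots,1}]$ contributes $1$, and the waxing case is $a_n^{(231,312)}=\sum_{j=0}^{n-1}(n-j)s_j^{(231,312)}=2^n-1$ via Lemma~\ref{lem: formula affine avoid n---}. The differences are cosmetic: you exhibit the $231$-pattern $x\preceq c\preceq a$ explicitly where the paper only asserts it, and you evaluate $n+\sum_{j=1}^{n-1}(n-j)2^{j-1}$ directly rather than through a generating function.
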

\begin{proof}
    A TITO avoiding $231$ cannot have two blocks such that the first is waxing and the second is waning. Thus, by Lemma \ref{lem:312 blocks}, any $(312, 231)$-avoiding TITO consists of exactly one block, which is either waxing or waning. If the block is waning, then the TITO must be $[\underline{n, \dots, 1}]$, which is $(312, 231)$-avoiding. If the block is waxing, then the TITO is an affine permutation, so it suffices to compute $a_n^{(312, 231)}$, the number of such affine permutations.
    
    By Lemma \ref{lem:inversion}, we have $a_n^{(312, 231)} = a_n^{(231, 312)}$. Applying Lemma \ref{lem: formula affine avoid n---} to the pattern $312$, which satisfies $p_1 = 3 = k$, we obtain
    \begin{align*}
        a_n^{(312, 231)} = a_n^{(231, 312)} = \sum_{\alpha = 1}^n \left(\sum_{\beta = 0}^{\alpha - 1} s_{\beta}^{(231, 312)}s_{n-\beta-1}^{(231, 12)}\right) = \sum_{j = 0}^{n-1} (n-j)s_j^{(231, 312)},
    \end{align*}
    where the second equality follows because $s_{k}^{(231, 12)} = 1$ for all $k$. (That is, the only permutation of length $k$ avoiding $12$ is $k \cdots 1$).

    It follows that
    \begin{equation*}
        f_n^{(312, 231)} = 1 + a_n^{(231, 312)} = 1+ \sum_{j = 0}^{n-1} (n-j)s_j^{(231, 312)} = 1 + n + \sum_{j = 1}^{n-1} (n-j)2^{j-1},
    \end{equation*}
    where the third equality follows from Lemma \ref{lem:pattern avoiding perm} together with $s_0^{(231, 312)} = 1$.

    We show the right-hand side is $2^n$ via a generating function argument. Let
    $$G(x) = \sum_{n \geq 0} \left(1 + n + \sum_{j = 1}^{n-1} (n-j)2^{j-1}\right)x^n,$$
    so that $f_n^{(312, 231)} = [x^n]G(x)$ for every $n \geq 1$. (We work with $G$ rather than with $\mathcal{F}^{(312, 231)}$ because the two series differ in their constant terms: $f_0^{(312, 231)} = 0$ by convention, whereas the summand above equals $1$ at $n = 0$.) We have
    \begin{align*}
        G(x) &= \sum_{n \geq 0} \left(1 + n + \sum_{j = 1}^{n-1} (n-j)2^{j-1} \right) x^n \\
        &= \left(\sum_{n \geq 0} x^n\right) + \left(\sum_{n \geq 0} nx^n\right)  + \left(\sum_{n \geq 0} \left(\sum_{j = 1}^{n-1}(n-j)2^{j-1}\right)x^n\right) \\
        &= \frac{1}{1-x} + \frac{x}{(1-x)^2} + \frac{x^2}{(1-x)^2(1-2x)} \\
        &= \frac{1}{1-2x} \\
        &= \sum_{n \geq 0} 2^nx^n.
    \end{align*}
    
    Hence, $f_n^{(312, 231)} = 2^n$ for every $n \geq 1$. 
\end{proof}

\begin{prop}
    The number of $(312, 213)$-avoiding TITOs is $f_n^{(312, 213)} = 2^n.$
\end{prop}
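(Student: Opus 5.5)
The plan is to split according to the block structure allowed by Lemma~\ref{lem:312 blocks}. A $312$-avoiding TITO has one of three shapes: a single waning block, a single waxing block, or a waxing block followed by a waning block. I will count the $(312,213)$-avoiding TITOs of each shape and add the counts. Recall that $a<b<c$ form a $213$ pattern when $b \preceq a \preceq c$.

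\emph{Key observation.} I first show that a waxing block of a $213$-avoiding TITO has no inversions. Suppose $a<b$ lie in the same waxing block with $b \preceq a$. Since the block is waxing, $a \preceq a+n \preceq a+2n \preceq \cdots$. Choose $t$ large enough that $c=a+tn>b$. Then $a<b<c$ and $b \preceq a \preceq c$, which is a $213$ pattern. Hence in any $213$-avoiding TITO, every waxing block is ordered by the usual order of $\Z$.

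\emph{Counting each shape.}
\begin{enumerate}[label=(\roman*)]
\item Single waxing block. This is an affine permutation whose order is the natural order on $\Z$, so it must be the identity $[1,\dots,n]$. The identity avoids both $312$ and $213$, giving exactly one TITO.
\item Single waning block. By Lemma~\ref{lem:312 blocks} the TITO is $[\underline{n,\dots,1}]$. Every triple $a<b<c$ appears in the order $c,b,a$, i.e.\ as a $321$ pattern, so this TITO avoids both $312$ and $213$. This gives exactly one TITO.
\item Two blocks. The waxing block is increasing by the key observation, and the waning block is decreasing by Lemma~\ref{lem:312 blocks}. The TITO is therefore determined by the set $S$ of residues in the waning block, and $S$ must be a nonempty proper subset of $\Z/n\Z$. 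This gives $2^n-2$ candidates.
\end{enumerate}

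\emph{Checking the two-block candidates.} I must show that every such candidate avoids both patterns. For $312$, take $c\preceq a\preceq b$ with $c>b>a$.
\begin{itemize}
\item If $c$ and $a$ both lie in the waxing block, then $c\preceq a$ with $c>a$ is an inversion, which is impossible.
\item If $a$ and $b$ both lie in the waning block, then $a\preceq b$ forces $a>b$, which is impossible.
\item The only remaining placement is $c$ waxing and $a$ waning. Then $b$ must follow $a$, so $b$ is also waning, and this falls under the previous case.
\end{itemize}
For $213$, take $b\preceq a\preceq c$ with $a<b<c$.
\begin{itemize}
\item If $b$ and $a$ lie in the same block, then the pair $b\preceq a$ with $a<b$ is impossible: it would be an inversion in the waxing block, and it contradicts the decreasing order of the waning block.
\item Otherwise $b$ is waxing and $a$ is waning. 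Then $c$ follows $a$, so $c$ is waning, and $a\preceq c$ forces $c<a$, a contradiction.
\end{itemize}

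\emph{Conclusion.} Adding the three cases gives $f_n^{(312,213)} = 1+1+(2^n-2) = 2^n$. For $n=1$ there are no two-block candidates, and the formula still gives $2$, as it should.

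The main obstacle is the key observation ruling out inversions in the waxing block; it is what collapses the affine-permutation case to the identity. The rest is a finite case check over which block each of the three pattern entries lies in.
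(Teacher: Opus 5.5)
Your argument follows the same route as the paper: split by the three block shapes of Lemma \ref{lem:312 blocks}, show the waxing block is increasing, and count $1+1+(2^n-2)$. Your key observation, taking $c=a+tn$ with $t$ large, is slightly cleaner than the paper's pattern $b\preceq a\preceq a+n$. That pattern needs $b<a+n$, which the paper gets from Lemma \ref{lem:windows of 312}. Your version needs no such bound and applies to any inversion with its smaller entry in a waxing block. Your explicit check that the two-block candidates avoid both patterns is also more detailed than the paper's one-line assertion.

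There is, however, a false step in that check, in the first bullet for $213$. You claim that if $a<b$ lie in the same block, then $b\preceq a$ contradicts the decreasing order of the waning block. It does not. In a decreasing waning block, $a<b$ forces $b\preceq a$, so this is exactly what happens when $a$ and $b$ are both waning. That subcase has to be ruled out using $c$.

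The clean case split is on the block containing $a$:
\begin{itemize}
\item If $a$ is waxing, then $b\preceq a$ forces $b$ to be waxing too, since the waxing block comes first. This gives an inversion in the waxing block, which is impossible.
\item If $a$ is waning, then $c$ follows $a$, so $c$ is waning. Hence $a\preceq c$ forces $c<a$, contradicting $a<b<c$. This is the argument of your second bullet, and it applies verbatim whether or not $b$ is waning.
\end{itemize}
With this repair the candidates do avoid $213$, and the count $f_n^{(312,213)}=2^n$ stands.
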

\begin{proof}
    We divide into cases according to Lemma \ref{lem:312 blocks}. 
    
    Let $\preceq$ be a $(312, 213)$-avoiding TITO. If $\preceq$ consists of exactly one waning block, then by Lemma \ref{lem:312 blocks} $\preceq$ must be $[\underline{n, \dots, 1}]$. If $\preceq$ consists of exactly one waxing block, then $\preceq$ must be $[1, \dots, n]$ because $\preceq$ must be $21$-avoiding, since an inversion $(a,b)$ in the window would give the $213$ pattern $b \preceq a \preceq a+n$.

    If $\preceq$ has two blocks, then the terms in the waxing block must be increasing to avoid $213$, and the resulting TITO remains $312$-avoiding as well. This case contributes $\sum_{i = 1}^{n-1} \binom{n}{i}$ because there are $\sum_{i = 1}^{n-1} \binom{n}{i}$ ways to partition the $n$ congruence classes into two nonempty blocks.
    
    Hence, $f_n^{(312, 213)} = \sum_{i = 0}^n \binom{n}{i} = 2^n$ for every $n$.
\end{proof}

\begin{prop}
    The number of $(312, 321)$-avoiding TITOs is $f_n^{(312, 321)} = 2^n - 1.$
\end{prop}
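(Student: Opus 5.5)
The plan is to reduce to counting affine permutations, exactly as in Proposition~\ref{prop: 312 231}. By Lemma~\ref{lem:312 blocks}, a $(312,321)$-avoiding TITO $\preceq$ has at most two blocks, a waxing one followed by a waning one. First, $\preceq$ can have no waning block at all. If $x$ lies in a waning block, then $x+n \prec x \prec x-n$. Since $x-n < x < x+n$, these three integers form a $321$ pattern. Hence $\preceq$ consists of exactly one waxing block, so it is an affine permutation. This gives $f_n^{(312,321)} = a_n^{(312,321)}$.

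Next I apply Lemma~\ref{lem:inversion}. Since $312^{-1} = 231$ and $321^{-1} = 321$, it gives $a_n^{(312,321)} = a_n^{(231,321)}$. The pattern $p = 321$ satisfies $p_1 = 3 = k$, and $p_{[2:3]}$ standardizes to $21$. So Lemma~\ref{lem: formula affine avoid n---} yields
$$a_n^{(231,321)} = \sum_{j=0}^{n-1} (n-j)\, s_j^{(231,321)}\, s_{n-j-1}^{(231,21)} = \sum_{j=0}^{n-1} (n-j)\, s_j^{(231,321)},$$
because the only $21$-avoiding permutation of each length is the identity, so $s_m^{(231,21)} = 1$. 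By Lemma~\ref{lem:pattern avoiding perm}, $s_j^{(231,321)} = 2^{j-1}$ for $j \ge 1$, and $s_0 = 1$. Therefore
$$a_n^{(231,321)} = n + \sum_{j=1}^{n-1} (n-j)\, 2^{j-1}.$$

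Finally, I evaluate this sum. The expression is exactly the quantity in the proof of Proposition~\ref{prop: 312 231} with the leading $1$ removed. The generating-function computation there showed that $1 + n + \sum_{j=1}^{n-1} (n-j)2^{j-1} = 2^n$. Hence $a_n^{(231,321)} = 2^n - 1$, and so $f_n^{(312,321)} = 2^n - 1$.

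The only step that needs care is the first one: ruling out the waning block and checking that the $321$ occurrence is read correctly under the reversed convention. Everything after that is a direct substitution into earlier lemmas.
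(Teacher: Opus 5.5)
Your proof is correct and follows the paper's argument. You rule out a waning block via the $321$ pattern $x+n \prec x \prec x-n$, reduce to $a_n^{(231,321)}$ by inversion and the $p_1 = k$ affine counting formula with $s_m^{(231,21)} = 1$, and evaluate the sum using the generating-function computation from the $(312,231)$ case. The only cosmetic difference is that the paper notes $s_j^{(231,321)} = s_j^{(231,312)}$ to identify the sum with $a_n^{(312,231)} = f_n^{(312,231)} - 1$, whereas you substitute $2^{j-1}$ directly.
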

\begin{proof}
    A TITO avoiding $321$ cannot have a waning block. Thus, by Lemma \ref{lem:312 blocks}, any $(312, 321)$-avoiding TITO must consist of exactly one waxing block, so $f_n^{(312, 321)} = a_n^{(312, 321)} = a_n^{(231, 321)}$ by Lemma \ref{lem:inversion}. 
    By Lemma \ref{lem: formula affine avoid n---}, we have 
    \begin{align*}
        a_n^{(231, 321)} = \sum_{\alpha = 1}^n \left(\sum_{\beta = 0}^{\alpha - 1} s_{\beta}^{(231, 321)}s_{n-\beta-1}^{(231, 21)}\right) = \sum_{j = 0}^{n-1} (n-j)s_j^{(231, 321)}
    \end{align*}
    where the second equality follows because $s_{k}^{(231, 21)} = 1$ for all $k$. (That is, the only permutation of length $k$ avoiding $21$ is $1 \cdots k$).

    By Lemma \ref{lem:pattern avoiding perm}, $s_j^{(231, 321)} = s_j^{(231, 312)}$, so $a_n^{(312,321)} = a_n^{(231,321)} = \sum_{j=0}^{n-1}(n-j)s_j^{(231,312)} = a_n^{(312,231)}$. Recall from the proof of Proposition \ref{prop: 312 231} that this same sum equals $f_n^{(312,231)} - 1$, since $f_n^{(312,231)} = 1 + a_n^{(312,231)}$, and the $1$ accounts for the single waning-block TITO $[\underline{n,\dots,1}]$. Thus, we get
$$
f_n^{(312,321)} = a_n^{(312,321)} = a_n^{(312,231)} = f_n^{(312,231)} - 1 = 2^n - 1,
$$
where the $-1$ comes from excluding the case $[\underline{n,\dots,1}]$.
\end{proof}

\begin{prop}
    The number of $(123, 321)$-avoiding TITOs is $f_n^{(123, 321)} = 0.$
\end{prop}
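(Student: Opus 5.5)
The plan is to show that every $n$-TITO, for every $n \geq 1$, contains a $123$ or a $321$ pattern. The key observation is that each block is infinite, and inside any block the translates of a single element are already monotone. So we do not need Lemma~\ref{lem:312 blocks} or any block-structure classification, which would not be available anyway since we are not assuming $312$-avoidance. We only need the fact, due to Barkley \cite{B25+} and recalled in \S\ref{sec: background}, that every block is either waxing or waning and that the blocks partition $\Z$.

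\textbf{Step 1: a waxing block gives $123$.} Pick any block $I$ of $\preceq$; such a block exists because the blocks partition $\Z$. Fix an element $x \in I$. Since congruent integers lie in the same block, $x$, $x+n$, and $x+2n$ all lie in $I$. If $I$ is waxing, then $x \preceq x+n$ for every element of $I$. Applying this to $x$ and to $x+n$ gives
$$x \preceq x+n \preceq x+2n.$$
Since $x < x+n < x+2n$ as integers, these three elements form a $123$ pattern.

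\textbf{Step 2: a waning block gives $321$.} If instead $I$ is waning, then $x+n \prec x$ for every element of $I$. Applying this twice gives
$$x+2n \prec x+n \prec x.$$
With $i_1 = x < i_2 = x+n < i_3 = x+2n$, we have $i_3 \preceq i_2 \preceq i_1$, which is exactly an occurrence of $321$ under the paper's convention.

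\textbf{Conclusion and main check.} Every $n$-TITO contains $123$ or $321$, so no TITO avoids both and $f_n^{(123,321)} = 0$ for all $n \geq 1$. It also matches the convention $f_0 = 0$, giving $\mathcal{F}^{(123,321)}(x) = 0$. The only point needing care is the direction of the pattern convention. For $p = 321$ we need $i_3 \preceq i_2 \preceq i_1$ with $i_1 < i_2 < i_3$, and that is what the waning chain supplies. Nothing in the argument depends on $n$, so $n = 1$ is covered as well.
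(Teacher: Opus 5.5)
Your proof is correct and is essentially the paper's argument: the paper simply notes that a $(123,321)$-avoiding TITO can have neither a waxing nor a waning block, so no such TITO exists. Your chains $x \preceq x+n \preceq x+2n$ and $x+2n \prec x+n \prec x$ are exactly the justification that one-line proof leaves implicit, and you check the pattern-direction convention correctly.
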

\begin{proof}
    Any $(123, 321)$-avoiding TITO cannot have a waxing block or a waning block, so $f_n^{(123, 321)} = 0.$
\end{proof}

From the counting in the previous section, we obtain the following generating functions.

\begin{cor}
    For distinct $p, q \in S_3$, we have that $\mathcal{F}^{(p, q)}$ takes one of five forms:
    \begin{enumerate}[label=(\alph*)]
        \item $\mathcal{F}^{(312, 123)}(x) = \frac{x}{1-x}$; 
        \item $\mathcal{F}^{(312, 132)}(x) = \frac{2x}{1-x}$; 
        \item $\mathcal{F}^{(312, 231)}(x) = \mathcal{F}^{(312, 213)}(x) = \frac{2x}{1-2x}$; 
        \item $\mathcal{F}^{(312, 321)}(x) = \frac{x}{(1-2x)(1-x)}$;
        \item $\mathcal{F}^{(123, 321)}(x) = 0$.
    \end{enumerate}
\end{cor}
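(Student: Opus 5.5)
The corollary follows by summing the closed forms of Theorem~\ref{thm: enumerate 2 length 3} as power series, using the convention $f_0^{(p,q)}=0$. By the symmetries $\Psi_{\leftrightarrow}$ and $\Psi_{\updownarrow}$ recorded before Theorem~\ref{thm: enumerate 2 length 3}, every pair of distinct patterns in $S_3$ has the same counting sequence as one of the representatives in cases (a)--(f). It therefore suffices to compute $\mathcal{F}^{(p,q)}(x)=\sum_{n\ge 1} f_n^{(p,q)}x^n$ for those representatives.

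Each case is a standard geometric-series computation.
\begin{itemize}
\item For case (a), $f_n=1$ for all $n\ge 1$, so $\sum_{n\ge1}x^n=\frac{x}{1-x}$.
\item For case (b), $f_n=2$ gives $\frac{2x}{1-x}$.
\item For cases (c) and (d), $f_n=2^n$ gives $\sum_{n\ge1}(2x)^n=\frac{2x}{1-2x}$. This single formula covers both $(312,231)$ and $(312,213)$.
\item For case (e), $f_n=2^n-1$ gives
\[
\frac{2x}{1-2x}-\frac{x}{1-x}=\frac{2x(1-x)-x(1-2x)}{(1-2x)(1-x)}=\frac{x}{(1-2x)(1-x)}.
\]
\item For case (f), $f_n=0$ gives the zero series.
\end{itemize}

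The only point requiring care is the constant term. Theorem~\ref{thm: enumerate 2 length 3} is stated for $n\ge1$, and the sums above start at $n=1$ precisely to match the convention $f_0=0$; this is also why, for example, case (c) yields $\frac{2x}{1-2x}$ rather than the function $G(x)=\frac{1}{1-2x}$ that appeared in the proof of Proposition~\ref{prop: 312 231}.

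Beyond this bookkeeping, no step is a real obstacle. One should still confirm that the five listed forms exhaust the six symmetry classes, since classes (c) and (d) collapse into a single formula. With that checked, the corollary and its agreement with Table~\ref{tab:two length 3 enumeration} are complete.
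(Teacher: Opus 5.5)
Your proposal is correct and follows the paper's approach: the paper obtains these generating functions by summing the counts from Theorem~\ref{thm: enumerate 2 length 3} as power series, with the convention $f_0^{(p,q)}=0$. Your additional checks are all accurate: the computation for $2^n-1$, the constant-term bookkeeping, and the observation that the six symmetry classes cover all fifteen pairs while collapsing to five formulas.
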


\section{TITOs avoiding 312 and $p \in S_4$}\label{sec: results length 4}

In this section, we enumerate $(p, q)$-avoiding TITOs for $p \in S_3 \setminus \{123, 321\}$ and $q \in S_4$. We use antiautomorphisms $\Psi_{\leftrightarrow}$ and $\Psi_{\updownarrow}$ to reduce to the case of $(312, p)$-avoidance for $p \in S_4$. If $p$ contains the pattern $312$, then $f_n^{(312, p)} = f_n^{(312)}=\binom{2n}{n}$ by Barkley and Defant \cite{BD25+}.

The remaining $312$-avoiding length $4$ patterns fall into equivalence classes. Note that no nontrivial composition of $\Psi_\leftrightarrow$ and $\Psi_\updownarrow$ fixes the pattern $312$, so these classes do not arise from the symmetries used above; instead, they emerge from the classification of $(312,p)$-avoiding affine permutations developed in \S\ref{sec: affine count} and from the arc diagram analysis in \S\ref{subsubsec: arc diagram approach}.
\begin{enumerate}[label=(\alph*)]
\item $f_n^{(312, 1234)}$;
\item $f_n^{(312, 1243)}$;
\item $f_n^{(312, 4321)}$;
\item $f_n^{(312, 3421)} = f_n^{(312, 2431)}$;
\item $f_n^{(312, 3241)} = f_n^{(312, 3214)} = f_n^{(312, 2314)}$;
\item $f_n^{(312, 2341)}$;
\item $f_n^{(312, 2134)} = f_n^{(312, 2143)} = f_n^{(312, 1324)} = f_n^{(312, 1432)} = f_n^{(312, 1342)}$.
\end{enumerate}
We treat each case above separately to obtain the following results for the generating functions $\mathcal{F}^{(312,p)}(x)$ and the sequences $f_n^{(312,p)}$.

\begin{thm}\label{thm: 312 p avoiding TITO summary}
For $n \geq 1$, the cases above have the following enumerations:
\begin{enumerate}[label=(\alph*)]\item $\mathcal{F}^{(312, 1234)}(x) = \frac{x}{1-x}$ and $f_n^{(312, 1234)} = 1$;\item $\mathcal{F}^{(312, 1243)}(x) = \frac{2x}{1-x}$ and $f_n^{(312, 1243)} = 2$;\item $\mathcal{F}^{(312, 4321)}(x) = \frac{x(2x^2 - 2x + 1)}{(1-2x)(1-3x+x^2)}$ and$$f_n^{(312, 4321)} = \left(\frac{3+\sqrt{5}}{2}\right)^n + \left(\frac{3-\sqrt{5}}{2}\right)^n - 2^n;$$\item $\mathcal{F}^{(312, p)}(x) = \frac{x(2 - 8x + 11x^2 - 4x^3)}{(1-x)(1-2x)(1-3x+x^2)}$ and$$f_n^{(312, p)} = \left(\frac{3+\sqrt{5}}{2}\right)^n + \left(\frac{3-\sqrt{5}}{2}\right)^n - 2^n + 1$$for $p \in \{3421, 2431\}$;\item $\mathcal{F}^{(312, p)}(x) = \frac{-x(x^2-2x+2)}{(x-1)(x^2-3x+1)}$ and$$f_n^{(312, p)} = \left(\frac{3+\sqrt{5}}{2}\right)^n + \left(\frac{3-\sqrt{5}}{2}\right)^n - 1$$for $p \in \{3241, 3214, 2314\}$;\item $\mathcal{F}^{(312, 2341)}(x) = \frac{x(3x^3-9x^2+6x-2)}{(1-x)(3x^3 - 5x^2 + 4x - 1)}$ and$$f_n^{(312, 2341)} = -2 + \lambda_1^n + \lambda_2^n + \lambda_3^n,$$where $\lambda_1, \lambda_2, \lambda_3$ are the reciprocals of the roots of $3x^3 - 5x^2 + 4x - 1$;\item $\mathcal{F}^{(312, p)}(x) = \frac{2x}{1-2x}$ and $f_n^{(312, p)} = 2^n$ for $p \in \{2134, 2143, 1324, 1432, 1342\}$.\end{enumerate}\end{thm}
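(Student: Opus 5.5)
The plan is to use the decomposition from Lemma \ref{lem:312 blocks}. A $(312,q)$-avoiding $n$-TITO has one of three shapes:
\begin{enumerate}
\item a single waning block, necessarily $[\underline{n,\dots,1}]$;
\item a single waxing block, i.e.\ an affine permutation;
\item a waxing block followed by a decreasing waning block.
\end{enumerate}
This gives
\[
f_n^{(312,q)}=a_n^{(312,q)}+b_n^{(312,q)},
\]
where $b_n$ counts the first and third shapes.

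For the affine part, Lemma \ref{lem:inversion} gives $a_n^{(312,q)}=a_n^{(231,q^{-1})}$. The inverse $q^{-1}$ lands in one of the shapes covered by Lemmas \ref{lem: formula affine avoid n---}, \ref{lem: formula affine avoid 1n--} and \ref{lem: formula affine avoid ---n}: first entry $k$; first entry $1$ with second entry $k$; or first entry $k-1$ with last entry $k$. The resulting convolution sums then reduce to permutation counts:
\begin{itemize}
\item $s^{(231,\,\cdot)}$ for length-$3$ or length-$2$ patterns, supplied by Lemmas \ref{lem:pattern avoiding perm} and \ref{lem: generating function perm};
\item the two West series of Lemma \ref{lem: perm generating functions};
\item Lemma \ref{lem: same count affine} for patterns whose tails are Wilf-equivalent.
\end{itemize}
In generating-function form each convolution $\sum_j (n-j)s_j t_{n-j-1}$ becomes $\frac{x}{(1-x)^2}S(x)T(x)$. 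Each $a$-series is therefore a rational function obtained by routine algebra.

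For the two-block part, I would handle each $q$ directly. For $q$ ending in a descent structure that forces the waxing part to be increasing (e.g.\ $2134$, $2143$, $1324$, $1432$, $1342$, $1243$, $1234$), I expect to argue as in the $(312,213)$ and $(312,132)$ propositions. A pattern occurrence can straddle the blocks or use translates $a, a+n$. From this, either the waxing window must be increasing, giving $\binom{n}{i}$ choices of residue split, or the two-block case is impossible. For example, $1234$ forbids any waxing block at all, since $a\prec a+n\prec a+2n\prec a+3n$. Likewise $1243$ forbids mixing blocks: waxing $a,a+n$ followed by two waning elements gives $1243$. The same style of argument shows $a_n=2^n-1$ or $a_n=1$ in these degenerate cases.

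For patterns ending in $1$ or containing $321$ (e.g.\ $4321$, $3421$, $2431$, $3241$, $2341$), the waning block already supplies arbitrarily long decreasing chains lying after the waxing block. Avoiding $q$ then amounts to the waxing part avoiding a shorter prefix pattern relative to the residues of the waning block. For $4321$ this should force the waning block to be trivial or the waxing part to avoid $21$ appropriately, and I would count the resulting two-block configurations by a binomial or Fibonacci-type sum.

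The main obstacle is the pair $3214$, $2314$ in case (e). These do not fit the lemmas for affine permutations directly, and their two-block count is not a simple product. Here I would use Algorithm \ref{algo: inverse construction}. I would translate avoidance of $3214$ (resp.\ $2314$) into structural restrictions on the noncrossing arc diagram: bounds on subcompass path lengths and on nesting of subwalkers, together with a restriction on walkers that sit after a cycle. I would then enumerate these restricted diagrams via a transfer or recursive decomposition on the annulus, aiming to match the expression $L_{2n}-1$ that case (e) gives for $3241$.

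Finally I would sum the $a$ and $b$ series for each class, simplify to the stated rational functions, and read off closed forms by partial fractions. The roots of $1-3x+x^2$ give $\bigl(\frac{3\pm\sqrt5}{2}\bigr)^n$, and the cubic $3x^3-5x^2+4x-1$ for $2341$ comes from West's $4123$ series.
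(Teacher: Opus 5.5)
Your overall architecture matches the paper's: the split $f_n=a_n+b_n$ from Lemma~\ref{lem:312 blocks}, affine counts via Lemma~\ref{lem:inversion}, the three convolution lemmas and West's series, direct block arguments for most projective counts, and arc diagrams for $3214$ and $2314$. However, the one generating-function identity you commit to is wrong, and it would corrupt every affine count it is applied to. In $\sum_{j=0}^{n-1}(n-j)s_jt_{n-j-1}$ the weight $n-j$ equals $m+1$, where $m=n-j-1$ is the index of $t$. So the series is $xS(x)\frac{d}{dx}\bigl(xT(x)\bigr)=xS(x)\bigl(T(x)+xT'(x)\bigr)$, not $\frac{x}{(1-x)^2}S(x)T(x)$. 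For $q=4321$ your version gives $\frac{x}{(1-x)(1-3x+x^2)}$, whose $x^2$-coefficient is $4$. But $a_2^{(312,4321)}=3$, coming from the windows $[1,2]$, $[2,1]$, $[0,3]$. The correct identity, with $S=\mathcal{S}^{(231,4321)}$ and $T=\mathcal{S}^{(231,321)}$, returns $\frac{x(1-2x+2x^2)}{(1-2x)(1-3x+x^2)}$.

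Your treatment of the patterns ending in $1$ is also misdirected. For $4321$ there is nothing to choose: every waning block contains $x+3n\prec x+2n\prec x+n\prec x$, so $b_n^{(312,4321)}=0$, and even $[\underline{n,\dots,1}]$ is excluded. For $3421$, $2431$, $2341$ the two-block shape is impossible outright, since a waxing element (or $a\prec a+n$, or $a\prec a+n\prec a+2n$) followed by suitable waning elements already forms the pattern; hence $b_n=1$. The only nontrivial two-block count is for $3241$. There an inversion $(a,b)$ of the waxing block together with waning elements $d<a<b<c$ gives $b\preceq a\preceq c\preceq d$, so the waxing block must be increasing and $b_n=2^n-1$. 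No Fibonacci-type two-block sum occurs.

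The more serious gap is case (e) for $3214$ and $2314$, which you leave as an intention. The affine part is no obstacle: $3214^{-1}=3214$ and $2314^{-1}=3124$ start with $3$ and end with $4$, so Lemma~\ref{lem: formula affine avoid ---n} gives $a_n=2^n-1$. Your first paragraph lists this shape, and your later remark contradicts it.

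For the projective part, the missing idea is a reduction. The last entry, in $\preceq$-order, of a $3214$ or $2314$ occurrence is its largest integer. Since the waning block is decreasing and comes last, the other three entries must lie in the waxing block. Conversely, any $321$ (resp.\ $231$) in the waxing block extends to the pattern by a far translate $i_1+Mn$. So the condition is exactly that the waxing block avoids $321$ (resp.\ $231$), with no condition involving the cycle.

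Via Algorithm~\ref{algo:support} this becomes a condition on each walker separately: no nested arcs and no two-step path (resp.\ no arc of length greater than one). Such walkers on $k$ points number $2^{k-1}$. The diagrams are then counted by choosing:
\begin{enumerate}
\item a $k$-cycle;
\item a weak composition of $n-k$ into $k$ parts giving the walker sizes in the gaps;
\item a walker in each gap;
\item a factor $1+c_1$ for the position of $v_1$, where $c_1$ is the first part.
\end{enumerate}
This yields $\mathcal{B}(x)=\frac{xG(x)}{1-x\mathcal{W}(x)}$ with $\mathcal{W}(x)=\frac{1-x}{1-2x}$ and $G=\mathcal{W}+x\mathcal{W}'$, which is $\mathcal{A}^{(312,4321)}(x)$. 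Your ``bounds on subcompass path lengths'', ``restriction on walkers that sit after a cycle'' and ``transfer or recursive decomposition'' identify neither these conditions nor this decomposition. So (e) for $3214$ and $2314$ is not yet proved.
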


We have two main techniques for counting $(312, p)$-avoiding TITOs, and we divide this section into subsections accordingly. First, in $\S \ref{subsubsec: arc diagram approach}$, we use the bijection between $312$-avoiding TITOs and noncrossing arc diagrams to provide enumerative formulas for $f_n^{(312, p)}$ for $p \in \{3214, 2314\}$. Second, in $\S \ref{subsubsec: direct approach}$, we use affine permutations to compute $f_n^{(312, p)}$ for all $p \in S_4 \setminus \{3214, 2314\}$. 

\subsection{Enumerating $(312, p)$-avoiding affine permutations}\label{sec: affine count}
Recall from Lemma \ref{lem:inversion} that $a_n^{(312, p)} = a_n^{(231, p^{-1})}$ for all $p$. Using the characterization of the $(231, p)$-avoiding affine permutations given in $\S \ref{sec: affine permutations}$, we arrive at the following theorem.

\begin{thm}\label{thm: 312 p avoiding affine}
    For patterns $p$ of length $4$ avoiding $312$ and for $n \geq 1$, we have 
    \begin{align*}
        a_n^{(312, p)} &= \begin{cases}
            0 &\text{if } p = 1234;\\
            1 &\text{if }p \in \{1243, 2134, 2143, 1324\}; \\
            2^n - 1 &\text{if } p \in \{1432, 1342, 3214, 2314\}; \\
            \left(\frac{3+\sqrt{5}}{2}\right)^n + \left(\frac{3-\sqrt{5}}{2}\right)^n - 2^n &\text{if } p \in \{4321, 3421, 2431, 3241\};\\
            \lambda_1^n + \lambda_2^n + \lambda_3^n - 3 &\text{if } p = 2341,
        \end{cases}
    \end{align*}
    where $\alpha \in \R$ and $\beta, \overline{\beta} \in \C$ are the roots of $3x^3 - 5x^2 + 4x - 1$, and $\lambda_1 = 1/\alpha$, $\lambda_2 = 1/\beta$, and $\lambda_3 = 1/\overline{\beta}$.
\end{thm}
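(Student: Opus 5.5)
The plan is to pass to $231$-avoidance and apply the counting lemmas of \S\ref{sec: affine permutations}. By Lemma \ref{lem:inversion}, $a_n^{(312,p)} = a_n^{(231,p^{-1})}$, so I would first list the inverses of the fourteen relevant patterns:
\[
1234,\ 1243,\ 2134,\ 2143,\ 1324 \ \text{(self-inverse)},\qquad 1432\mapsto 1432,\ 1342\mapsto 1423,\ 3214\mapsto 3214,\ 2314\mapsto 3124,
\]
\[
4321\mapsto 4321,\ 3421\mapsto 4312,\ 2431\mapsto 4132,\ 3241\mapsto 4213,\ 2341\mapsto 4123 .
\]
The inverses then sort into four shapes. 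Those beginning with $4$ fall under Lemma \ref{lem: formula affine avoid n---}. Those beginning with $14$ fall under Lemma \ref{lem: formula affine avoid 1n--}. Those with $q_1=3$ and $q_4=4$ fall under Lemma \ref{lem: formula affine avoid ---n}. The rest, $1234, 1243, 2134, 2143, 1324$, I would handle directly.

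\emph{Patterns $p$ with $p_1=4$.} For $q\in\{4321,4312,4132,4213\}$, the suffix $q_{[2:4]}$ standardizes to one of $321, 312, 132, 213$. By Lemma \ref{lem:pattern avoiding perm}, each of these gives $s_m^{(231,\cdot)}=2^{m-1}$ for $m\ge 1$. Lemma \ref{lem: same count affine} therefore says all four $q$ have the same count as $4321$.

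For $q=4321$, Lemma \ref{lem: formula affine avoid n---} gives $a_n=\sum_j (n-j)s_j^{(231,4321)}s_{n-1-j}^{(231,321)}$. In generating functions, a convolution weighted by $(n-j)$ becomes $\mathcal{A}(x)=\mathcal{S}^{(231,4321)}(x)\cdot x\,\tfrac{d}{dx}\bigl(x\,\mathcal{S}^{(231,321)}(x)\bigr)$. Here $\mathcal{S}^{(231,4321)}$ comes from Lemma \ref{lem: perm generating functions} and $\mathcal{S}^{(231,321)}=\frac{1-x}{1-2x}$ comes from Lemma \ref{lem: generating function perm}. Partial fractions should then give $\bigl(\tfrac{3+\sqrt5}{2}\bigr)^n+\bigl(\tfrac{3-\sqrt5}{2}\bigr)^n-2^n$.

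For $q=4123$, I would run the same computation with $\mathcal{S}^{(231,4123)}=\frac{(1-x)^3}{1-4x+5x^2-3x^3}$ and $\mathcal{S}^{(231,123)}=\frac{1-2x+2x^2}{(1-x)^3}$. The denominator $1-4x+5x^2-3x^3$ factors as $(1-x)(1-3x+2x^2-\ldots)$ in some form; more precisely, the final rational function should have denominator containing $3x^3-5x^2+4x-1$ up to sign. Reading off the power sums of the reciprocal roots then yields $\lambda_1^n+\lambda_2^n+\lambda_3^n-3$.

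\emph{Patterns $1432$, $1423$, $3214$, $3124$.} The two relevant lemmas involve $s^{(231,321)}$ or $s^{(231,312)}$, which equals $2^{j-1}$ for $j\ge1$, together with $s^{(231,21)}$ or $s^{(231,12)}$, which equals $1$. So each reduces to $\sum_{j}(n-j)s_j^{(231,312)}$. This is exactly the sum shown to equal $2^n-1$ in the proofs of Proposition \ref{prop: 312 231} and of the $(312,321)$ case.

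\emph{Remaining patterns.} For $1234$, every affine permutation contains $\omega_1<\omega_{1+n}<\omega_{1+2n}<\omega_{1+3n}$, so $a_n=0$. For $q\in\{1243,2134,2143,1324\}$, I would use Lemma \ref{lem:231 avoiding in S_n} to shift to a window $u\in S_n$, which preserves avoidance. If $u$ has an inversion $u_i>u_j$ with $i<j$, then combining it with translates produces $q$. For example, $u_i,\,u_j,\,u_i+n,\,u_i+2n$ at positions $i<j<i+n<i+2n$ gives $2134$; the other patterns are handled analogously, placing the inversion at the appropriate spot, such as $u_i,\,u_i+n,\,u_j+2n,\,\ldots$ for $1243$. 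Hence $u$ is the identity, so $\beta=n$, which forces $\alpha=n$. By the bijection in the proof of Lemma \ref{lem: formula affine avoid n---}, exactly one $\omega$ remains, and it does avoid $q$, giving $a_n=1$.

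The main obstacle I expect is the generating-function algebra for $4123$: carrying the product through and checking that the resulting denominator is precisely $(1-x)(3x^3-5x^2+4x-1)$ with the stated constant $-3$. The construction of explicit $q$-occurrences in the small cases, particularly $1324$ and $2143$, is the other place that needs care.
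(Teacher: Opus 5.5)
Your proposal is correct and follows essentially the paper's route: pass to $231$-avoidance via Lemma~\ref{lem:inversion}, then apply Lemmas~\ref{lem: formula affine avoid n---}, \ref{lem: formula affine avoid 1n--}, \ref{lem: formula affine avoid ---n} together with the same generating-function computations. The only difference is cosmetic and lies in the $\{1243,2134,2143,1324\}$ case, where you use an arbitrary inversion of the shifted window $u\in S_n$ instead of a lower wall plus Lemma~\ref{lem:windows of 312}. Two small slips should be fixed. Your $1243$ occurrence should read $u_i,\,u_i+n,\,u_i+2n,\,u_j+2n$. Also, $1-4x+5x^2-3x^3$ has no factor $1-x$: it equals $-(3x^3-5x^2+4x-1)$ exactly, and the $(1-x)$ in the final denominator is what is left when the $(1-x)^3$ of $\mathcal{S}^{(231,4123)}$ cancels against the $(1-x)^4$ of $\tfrac{d}{dx}\bigl(x\mathcal{S}^{(231,123)}\bigr)$.
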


We consider each case separately below.
\begin{prop}
    The number of $(312, 1234)$-avoiding affine permutations is $a_n^{(312, 1234)} = 0$.
\end{prop}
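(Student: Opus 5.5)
The plan is to show directly that every affine permutation $\omega \in \tilde{S}_n$ contains a $123$ pattern, and hence contains $1234$. Then no $(312,1234)$-avoiding affine permutation exists, and $312$-avoidance does not even need to be used.

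The key observation is that translation invariance forces long increasing subsequences. Fix any index $i \in \Z$ and consider the positions $i < i+n < i+2n < i+3n$. By condition (1) in the definition of affine permutations, their values are $\omega_i < \omega_i + n < \omega_i + 2n < \omega_i + 3n$. This is an occurrence of the pattern $1234$ in $\omega$, in the sense of pattern avoidance for affine permutations defined before Lemma \ref{lem:inversion}. Therefore no affine permutation avoids $1234$, and in particular $a_n^{(312,1234)} = 0$ for all $n \geq 1$.

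I would add a remark tying this to the TITO setting. Under the Barkley--Defant correspondence between affine permutations and TITOs with a single waxing block, the same fact is immediate: in a waxing block, $x \preceq x+n \preceq x+2n \preceq x+3n$ with $x < x+n < x+2n < x+3n$ gives a $1234$ pattern. Either formulation works, but the window-value argument above stays entirely within the affine-permutation definitions of \S\ref{sec: affine permutations}, so I would use it.

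There is no real obstacle here. The only point needing care is the convention for pattern occurrences: we must check that the chosen positions are increasing and that the values realize the relative order $1234$ rather than its reverse. Both hold, since both sequences are strictly increasing.
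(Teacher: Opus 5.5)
Your argument is correct and is essentially the paper's proof: the paper observes that for any integer $x$ the elements $x \preceq x+n \preceq x+2n \preceq x+3n$ form a $1234$ pattern. That is your observation that the positions $i<i+n<i+2n<i+3n$ carry the increasing values $\omega_i<\omega_i+n<\omega_i+2n<\omega_i+3n$, stated in TITO language. One small fix: your opening sentence says every affine permutation ``contains a $123$ pattern, and hence contains $1234$,'' which reverses the implication (containing $1234$ implies containing $123$, not conversely), but your actual argument exhibits $1234$ directly, so nothing is lost.
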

\begin{proof}
    No affine permutation avoids $1234$ because, for any integer $x$, the elements $$x \preceq x + n \preceq x + 2n \preceq x + 3n$$ form a 1234 pattern. Therefore, $a_n^{(312, 1234)} = 0$.
\end{proof}

\begin{prop}
    For $p \in \{1243, 2134, 2143, 1324\}$, the number of $(312, p)$-avoiding affine permutations is $a_n^{(312, p)} = 1$.
\end{prop}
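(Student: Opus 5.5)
The plan is to work directly with the affine permutation $\omega$ as a bi-infinite sequence $(\omega_i)_{i\in\Z}$. I will show that the identity $[1,\dots,n]$ is the only affine permutation avoiding any one of $1243,2134,2143,1324$. The identity avoids $312$, so this gives $a_n^{(312,p)}=1$. The identity affine permutation has $\omega_i=i$, so every subsequence is increasing and it contains only the increasing pattern of each length. In particular it avoids $312$ and all four patterns $p$.

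For the converse, the first step is to produce an inversion in any non-identity $\omega\in\tilde S_n$. Suppose $\omega$ has no inversion, i.e.\ $i<j$ implies $\omega_i<\omega_j$. Then $\omega$ is an increasing bijection $\Z\to\Z$, hence $\omega_i=i+c$ for some constant $c$. The normalization $\sum_{i=1}^n\omega_i=\binom{n+1}{2}$ forces $c=0$. So if $\omega$ is not the identity, there are indices $i<j$ with $\omega_i>\omega_j$.

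The second step is to extend this single inversion to an occurrence of each pattern, using far-away translates. Since $\omega_{k+Nn}=\omega_k+Nn$, taking $N$ large puts a translate as far left or right as we like, in both position and value. The four constructions are as follows.
\begin{itemize}
\item For $2134$: use positions $i<j<i+Nn<i+2Nn$. Their values are $\omega_i,\ \omega_j,\ \omega_i+Nn,\ \omega_i+2Nn$.
\item For $1243$: use positions $i-2Nn<i-Nn<i<j$. Choose $N$ large enough that $\omega_i-Nn<\omega_j$.
\item For $2143$: use positions $i<j<i+Nn<j+Nn$. Choose $N$ so that $j<i+Nn$ and $\omega_j+Nn>\omega_i$. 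The values then have relative order $2,1,4,3$.
\item For $1324$: use positions $i-Nn<i<j<i+Nn$. Choose $N$ so that $\omega_i-Nn<\omega_j$. The values then have relative order $1,3,2,4$.
\end{itemize}
Hence every non-identity affine permutation contains each of these four patterns.

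The only point needing care is the convention for pattern containment. Affine permutation avoidance is defined via relative order of the subword $\omega_{i_1}\cdots\omega_{i_k}$. I will apply it exactly in that form, and none of Lemma~\ref{lem:inversion} is needed. Alternatively, since all four patterns are involutions, Lemma~\ref{lem:inversion} shows the statement is insensitive to replacing $\omega$ by $\omega^{-1}$, so any mismatch between the TITO and affine conventions is harmless. The main (minor) obstacle is simply choosing $N$ large enough that all the required position and value inequalities hold simultaneously. This is immediate because both quantities grow linearly in $N$.
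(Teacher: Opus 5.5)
Your proof is correct. It rests on the same basic idea as the paper's (take an inversion and pad it with translates by multiples of $n$), but it is organized differently and proves more.

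The paper works in TITO language. It takes a lower wall $(a,b)$ inside a window of the waxing block and uses the $312$ hypothesis, through Lemma~\ref{lem:windows of 312}, to get $a<b<a+n$. That inequality is exactly what lets translates by one or two periods serve as witnesses, for example $b\preceq a\preceq a+n\preceq a+2n$ for $2134$ and $a-n\preceq b\preceq a\preceq b+n$ for $1324$.

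You instead take an arbitrary inversion $i<j$ with $\omega_i>\omega_j$. You justify its existence cleanly: an increasing bijection $\Z\to\Z$ is a translation, and the normalization $\sum_{i=1}^n\omega_i=\binom{n+1}{2}$ forces it to be the identity. You then use translates by $Nn$ with $N$ large, so you need no bound on $j-i$ or on $\omega_i-\omega_j$.

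This removes every use of $312$-avoidance and of Lemma~\ref{lem:windows of 312}. It yields the stronger fact that the identity is the only affine permutation avoiding any single one of $1243,2134,2143,1324$. The $312$ condition then enters only when checking that the identity itself is counted.

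In exchange, the paper's version gives explicit witnesses within two periods. It is also set up so that the same four displayed patterns can be reused verbatim in Lemma~\ref{lem:increasing waxing} for the waxing block of a two-block TITO.

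Your side remark about inverses is fine as you use it, but note that $312$ itself is not an involution. The remark is harmless only because your set of $p$-avoiders is $\{\mathrm{id}\}$, which avoids both $312$ and $231$.
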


\begin{proof}
    For $p \in \{1243, 2134, 2143, 1324\}$, we show that the only 312-avoiding affine permutation in $\tilde{S}_n$ that also avoids $p$ is $[1, 2, \dots, n]$. Indeed, if the elements of the waxing block do not appear in increasing order, then some window of that block contains a lower wall $(a,b)$. Moreover, since $\preceq$ is $312$-avoiding, Lemma \ref{lem:windows of 312} gives $b - a < n$, i.e., $a < b < a + n$. However, this implies that
    \begin{align*}
        b-2n \preceq b - n \preceq b \preceq a \quad \text{is a 1243 pattern};\\
        b \preceq a \preceq a+n \preceq a+2n \quad \text{is a $2134$ pattern}; \\
        b \preceq a \preceq b+n \preceq a+n \quad \text{is a $2143$ pattern}; \\
        a-n \preceq b \preceq a \preceq b+n \quad \text{is a $1324$ pattern}.
    \end{align*}

    \noindent Thus, $a_n^{(312, p)} = 1$ for $p \in \{1243, 2134, 2143, 1324\}$. 
\end{proof}

\begin{prop}
    For $p \in \{1432, 1342, 3214, 2314\}$, the number of $(312, p)$-avoiding affine permutations is $a_n^{(312, p)} = 2^n - 1$.
\end{prop}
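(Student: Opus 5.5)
The plan is to reduce everything to counts of $231$-avoiding affine permutations via Lemma \ref{lem:inversion}, which gives $a_n^{(312,p)} = a_n^{(231,p^{-1})}$. I would first compute the inverses of the four patterns:
\begin{itemize}
\item $1432^{-1} = 1432$;
\item $3214^{-1} = 3214$;
\item $1342^{-1} = 1423$, since $1\mapsto 1$, $3\mapsto 2$, $4\mapsto 3$, $2\mapsto 4$;
\item $2314^{-1} = 3124$.
\end{itemize}
This leaves the four counts $a_n^{(231,1432)}$, $a_n^{(231,1423)}$, $a_n^{(231,3214)}$ and $a_n^{(231,3124)}$. Each of these patterns has one of the special shapes treated in \S\ref{sec: affine permutations}, so a ready-made convolution formula applies to each.

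For $1432$ and $1423$ we have $p_1 = 1$ and $p_2 = 4 = k$, so Lemma \ref{lem: formula affine avoid 1n--} applies. For $1432$ the relevant subwords standardize to $p_{[2:4]} = 432 \to 321$ and $p_{[3:4]} = 32 \to 21$. For $1423$ they are $p_{[2:4]} = 423 \to 312$ and $p_{[3:4]} = 23 \to 12$. For $3214$ and $3124$ we have $p_1 = 3 = k-1$ and $p_4 = 4 = k$, so Lemma \ref{lem: formula affine avoid ---n} applies. For $3214$ the subwords are $p_{[1:3]} = 321$ and $p_{[2:3]} = 21$. For $3124$ they are $p_{[1:3]} = 312$ and $p_{[2:3]} = 12$.

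In every case the second factor counts permutations avoiding both $231$ and a length-$2$ pattern. Hence $s_m^{(231,21)} = s_m^{(231,12)} = 1$ for all $m$, exactly as in the length-$3$ section. Each count therefore collapses to
$$a_n^{(312,p)} = \sum_{j=0}^{n-1} (n-j)\, s_j^{(231,q)}, \qquad q \in \{321, 312\}.$$
By Lemma \ref{lem:pattern avoiding perm}, both $\{231,321\}$ and $\{231,312\}$ give $s_j = 2^{j-1}$ for $j \ge 1$, with $s_0 = 1$. So all four sums coincide with the sum $\sum_{j=0}^{n-1}(n-j)s_j^{(231,312)} = a_n^{(312,231)}$ already evaluated in Proposition \ref{prop: 312 231}. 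There it equals $f_n^{(312,231)} - 1 = 2^n - 1$, and the result follows.

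The main obstacle is bookkeeping rather than ideas. The inverse computation and the standardizations of the subwords have to be exactly right, since a slip would send a case to the wrong shape-lemma. Lemmas \ref{lem: formula affine avoid 1n--} and \ref{lem: formula affine avoid ---n} were stated with proofs omitted, so it is also worth rechecking in the $1n$-- case that an occurrence using translates across windows really reduces to $p_{[2:k]}$ inside $u$. If I had doubts, I would verify $n = 2$ directly: the value should be $3$.
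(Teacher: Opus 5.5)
Your proposal is correct and is essentially the paper's proof. Both arguments invert to $p^{-1}\in\{1432,1423,3214,3124\}$ via Lemma~\ref{lem:inversion}, then apply Lemmas~\ref{lem: formula affine avoid 1n--} and~\ref{lem: formula affine avoid ---n} so that the length-$2$ factor is identically $1$, and finally use $s_j^{(231,312)}=s_j^{(231,321)}$ with the evaluation $\sum_{j=0}^{n-1}(n-j)s_j^{(231,312)}=2^n-1$ from Proposition~\ref{prop: 312 231}.

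Your inverse computations and standardizations are right, and the sanity value $3$ at $n=2$ is also correct.
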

\begin{proof}
    We consider $p \in \{1432, 1342, 3214, 2314\}$, so $p^{-1} \in \{1432, 1423, 3214, 3124\}$. Applying Lemmas \ref{lem: formula affine avoid 1n--} and \ref{lem: formula affine avoid ---n} (note that $3214$ and $3124$ satisfy the hypotheses $p_1 = k-1$ and $p_k = k$ of the latter), we have that 
    \begin{align*}
        a_n^{(312, 1432)} = a_n^{(231, 1432)} = \sum_{j = 0}^{n-1} (n-j) s_{j}^{(231, 321)}s_{n-j-1}^{(231, 21)} = \sum_{j = 0}^{n-1} (n-j) s_{j}^{(231, 321)}; \\
        a_n^{(312, 1342)} = a_n^{(231, 1423)} = \sum_{j = 0}^{n-1} (n-j) s_{j}^{(231, 312)}s_{n-j-1}^{(231, 12)} = \sum_{j = 0}^{n-1} (n-j) s_{j}^{(231, 312)}; \\
        a_n^{(312, 3214)} = a_n^{(231, 3214)} = \sum_{j = 0}^{n-1} (n-j) s_{j}^{(231, 321)}s_{n-j-1}^{(231, 21)} = \sum_{j = 0}^{n-1} (n-j) s_{j}^{(231, 321)}; \\
        a_n^{(312, 2314)} = a_n^{(231, 3124)} = \sum_{j = 0}^{n-1} (n-j) s_{j}^{(231, 312)}s_{n-j-1}^{(231, 12)} = \sum_{j = 0}^{n-1} (n-j) s_{j}^{(231, 312)}.
    \end{align*}
    By Lemma \ref{lem:pattern avoiding perm}, $s_{j}^{(231, 312)} = s_{j}^{(231, 321)}$, so it suffices to compute $\sum_{j = 0}^{n-1} (n-j)s_j^{(231, 321)}$. The proof of Proposition \ref{prop: 312 231} shows that this sum equals $2^n - 1$, so $$a_n^{(312, 1432)} = a_n^{(312, 1342)} = a_n^{(312, 3214)} = a_n^{(312, 2314)} = 2^n - 1.$$
\end{proof}
For $p \in \{4321, 3421, 2431, 3241\}$, we obtain a closed form for $a_n^{(312, p)}$ by first computing its generating function $\mathcal{A}^{(312, p)}(x)$.
\begin{lem}\label{lem: affine for 4321}
    For $p \in \{4321, 3421, 2431, 3241\}$, the number of $(312, p)$-avoiding affine permutations is $$a_n^{(312, p)} = \sum_{j = 0}^{n-1} (n-j)s_{j}^{(231, 4321)}s_{n-j-1}^{(231, 321)}.$$

    Moreover, the generating function is $$\mathcal{A}^{(312, p)}(x) = \sum_{n \geq 0}a_n^{(312, p)}x^n = \frac{x(2x^{2}-2x+1)}{(1-2x)(1-3x+x^{2})}.$$
\end{lem}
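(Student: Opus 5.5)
The plan is to move from $312$ to $231$ by inversion, reduce every pattern in the list to the case $p = 4321$ using the "first entry is maximal" lemmas of \S\ref{sec: affine permutations}, and then compute a single generating function.

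\textbf{Step 1: inversion.} By Lemma~\ref{lem:inversion}, $a_n^{(312,p)} = a_n^{(231,p^{-1})}$. The inverses of the four patterns are
$$4321^{-1} = 4321,\quad 3421^{-1} = 4312,\quad 2431^{-1} = 4132,\quad 3241^{-1} = 4213.$$
Each of these begins with its largest entry $4$. So Lemma~\ref{lem: formula affine avoid n---} applies directly and gives
$$a_n^{(231,q)} = \sum_{j=0}^{n-1}(n-j)\, s_j^{(231,q)}\, s_{n-j-1}^{(231,q_{[2:4]})}.$$
The standardized tails $q_{[2:4]}$ are $321$, $312$, $132$ and $213$.

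\textbf{Step 2: equal counts.} By Lemma~\ref{lem:pattern avoiding perm}, each of the pairs $\{231,321\}$, $\{231,312\}$, $\{231,132\}$, $\{231,213\}$ has $s_m = 2^{m-1}$ for $m \ge 1$, and also $s_0 = 1$. Hence all four tail counts coincide with $s_m^{(231,321)}$ for every $m$. Lemma~\ref{lem: same count perm} then gives $s_j^{(231,q)} = s_j^{(231,4321)}$ for all four $q$ (it needs only equality of tail counts for $m<j$). Substituting into Step 1 yields the first displayed formula of the lemma, simultaneously for all four patterns.

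\textbf{Step 3: generating function.} Set $t_m = s_m^{(231,321)}$ and write $T(x) = \sum_m t_m x^m = \frac{1-x}{1-2x}$, using Lemma~\ref{lem: generating function perm}. Reindex the sum by $m = n-j-1$, so that $n - j = m+1$. The sum then becomes the coefficient of $x^n$ in
$$x\,\mathcal{S}^{(231,4321)}(x)\,U(x), \qquad U(x) = \sum_{m\ge 0}(m+1)t_m x^m = \frac{d}{dx}\bigl(xT(x)\bigr).$$
Differentiating $\frac{x-x^2}{1-2x}$ gives $U(x) = \frac{1-2x+2x^2}{(1-2x)^2}$. Lemma~\ref{lem: perm generating functions} gives $\mathcal{S}^{(231,4321)}(x) = \frac{1-2x}{1-3x+x^2}$. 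Multiplying, one factor of $1-2x$ cancels, leaving
$$\mathcal{A}^{(312,p)}(x) = \frac{x(2x^2-2x+1)}{(1-2x)(1-3x+x^2)}.$$
This series has zero constant term, consistent with the convention $a_0 = 0$.

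I expect the main care point to be the bookkeeping in Step 2: correctly computing the inverse patterns and their standardized tails, and checking that the $m=0$ terms also match. The generating-function algebra in Step 3 is routine.
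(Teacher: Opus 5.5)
Your proposal is correct and follows essentially the same route as the paper. You pass to $(231,p^{-1})$ by inversion, use the $2^{m-1}$ tail counts to identify all four $s_j^{(231,p^{-1})}$ with $s_j^{(231,4321)}$, apply the formula for patterns beginning with their maximum, and compute $\mathcal{A}^{(312,p)}(x)=x\,\mathcal{S}^{(231,4321)}(x)\,\frac{d}{dx}\bigl(x\,\mathcal{S}^{(231,321)}(x)\bigr)$, which is exactly the paper's $\mathcal{S}+x\mathcal{S}'$ computation, and your inverse patterns, tails, and algebra all check out.
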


\begin{proof}
We consider $p \in \{4321, 3421, 2431, 3241\}$, so $p^{-1} \in \{4321, 4312, 4132, 4213\}$. Each of these permutations begins with $4$, with respective length-$3$ tails $321, 312, 132, 213$; by Lemma \ref{lem:pattern avoiding perm}, these tails all satisfy $s_n^{(231,\cdot)} = 2^{n-1}$ for $n \geq 1$. By Lemma \ref{lem: same count perm}, it follows that $s_n^{(231,4321)} = s_n^{(231,4312)} = s_n^{(231,4132)} = s_n^{(231,4213)}$ for every $n$, so we may uniformly write $s_j^{(231,p^{-1})} = s_j^{(231,4321)}$ regardless of which $p$ we consider. By the same reasoning applied to the length-$3$ tails, $s_{n-j-1}^{(231,p^{-1}_{[2:4]})} = s_{n-j-1}^{(231,321)}$ as well.

Applying Lemma \ref{lem: formula affine avoid n---} to $p^{-1}$, we therefore have
$$
    a_n^{(312, p)} = a_n^{(231,p^{-1})} = \sum_{j=0}^{n-1} (n-j) s_j^{(231,4321)} s_{n-j-1}^{(231,321)}
$$
for all $p \in \{4321, 3421, 2431, 3241\}$.

By Lemmas \ref{lem: generating function perm} and \ref{lem: perm generating functions}, we have the generating functions
$$
    \mathcal{S}^{(231,321)}(x) = \frac{1-x}{1-2x}, \qquad \mathcal{S}^{(231,4321)}(x) = \frac{1-2x}{1-3x+x^2}.
$$
Differentiating $\mathcal{S}^{(231,321)}(x)$ gives $\big(\mathcal{S}^{(231,321)}\big)'(x) = \dfrac{1}{(1-2x)^2}$, so that
$$
    \sum_{k \geq 0} (k+1) s_k^{(231,321)} x^k = \mathcal{S}^{(231,321)}(x) + x\big(\mathcal{S}^{(231,321)}\big)'(x).
$$
Multiplying by $x\mathcal{S}^{(231,4321)}(x)$ produces exactly the generating function of the desired sum, so
\begin{align*}
\mathcal{A}^{(312, p)}(x)
     &= x\mathcal{S}^{(231, 4321)}\mathcal{S}^{(231, 321)}(x)
        + x^{2}\mathcal{S}^{(231, 4321)}\big(\mathcal{S}^{(231, 321)}\big)'(x) \\[4pt]
     &= \frac{x(1-2x)}{1-3x+x^{2}}\cdot\frac{1-x}{1-2x}
        + x^{2}\cdot\frac{1-2x}{1-3x+x^{2}}\cdot\frac{1}{(1-2x)^{2}}\\[6pt]
     &= \frac{x(2x^{2}-2x+1)}{(1-2x)(1-3x+x^{2})},
\end{align*}
as desired.
\end{proof}

Extracting a closed form for the coefficients of a rational function is routine, so we simply state the result below.
\begin{prop}\label{prop: closed form 4321}
    For $p \in \{4321, 3421, 2431, 3241\}$ and $n \geq 1$, the number of $(312, p)$-avoiding affine permutations is $$a_n^{(312, p)}  = \left(\frac{3+\sqrt{5}}{2}\right)^n + \left(\frac{3-\sqrt{5}}{2}\right)^n - 2^n.$$
\end{prop}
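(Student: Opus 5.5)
Starting from Lemma~\ref{lem: affine for 4321}, which gives
\[
\mathcal{A}^{(312,p)}(x) = \frac{x(2x^2-2x+1)}{(1-2x)(1-3x+x^2)},
\]
I will extract coefficients by partial fractions. The factor $1-3x+x^2$ factors as $(1-\varphi^2 x)(1-\varphi^{-2}x)$ with $\varphi^{\pm 2} = \frac{3\pm\sqrt5}{2}$, since these two numbers have sum $3$ and product $1$. The three denominator factors have distinct roots, and the numerator has degree $3$, equal to the degree of the denominator. So I will write
\[
\mathcal{A}^{(312,p)}(x) = c_0 + \frac{A}{1-2x} + \frac{B}{1-\varphi^2 x} + \frac{C}{1-\varphi^{-2}x},
\]
where the constant $c_0$ only affects the coefficient of $x^0$. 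Consequently, for $n\ge 1$ we get $a_n^{(312,p)} = A\,2^n + B\varphi^{2n} + C\varphi^{-2n}$.

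Rather than solving for $A,B,C$ by residues, I expect it is cleaner to verify the claimed answer directly. The conjectured values $A=-1$ and $B=C=1$ correspond to the series
\[
\sum_{n\ge1}\bigl(\varphi^{2n}+\varphi^{-2n}\bigr)x^n - \sum_{n\ge1}2^nx^n .
\]
The Lucas-type sum $\sum_{n\ge0}(\varphi^{2n}+\varphi^{-2n})x^n$ equals $\frac{2-3x}{1-3x+x^2}$, because its numerator is $(1-\varphi^{-2}x)+(1-\varphi^2x)$. Subtracting the $n=0$ term $2$ gives
\[
\frac{2-3x}{1-3x+x^2}-2=\frac{3x-2x^2}{1-3x+x^2},
\]
and likewise $\sum_{n\ge1}2^nx^n=\frac{2x}{1-2x}$.

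It then remains to check the rational identity
\[
\frac{x(3-2x)}{1-3x+x^2}-\frac{2x}{1-2x}=\frac{x(2x^2-2x+1)}{(1-2x)(1-3x+x^2)} .
\]
Over the common denominator, the left numerator is
\[
x\bigl[(3-2x)(1-2x)-2(1-3x+x^2)\bigr] = x\bigl[3-8x+4x^2-2+6x-2x^2\bigr] = x(1-2x+2x^2),
\]
which matches the right-hand side. Comparing coefficients for $n\ge1$ then gives the proposition.

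The only potential obstacle is bookkeeping of the constant term: $\mathcal{A}$ has zero constant term, whereas the closed form at $n=0$ would give $1$. Working with sums starting at $n\ge1$, as above, avoids this issue entirely.
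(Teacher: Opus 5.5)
Your proof is correct and takes the same route as the paper, which simply says the closed form follows by routine coefficient extraction from the rational generating function in Lemma~\ref{lem: affine for 4321}. Your explicit check that $\frac{x(3-2x)}{1-3x+x^2}-\frac{2x}{1-2x}$ equals that function fills in this omitted routine step correctly. You also handle the $n=0$ term properly: the closed form gives $1$ there, while $a_0 = 0$.
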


To compute $a_n^{(312, 2341)}$, we follow a similar procedure. 

\begin{lem}\label{lem: affine for 2341}The number of $(312, 2341)$-avoiding affine permutations is$$a_n^{(312, 2341)} = \sum_{j = 0}^{n-1} (n-j)s_{j}^{(231, 4123)}s_{n-j-1}^{(231, 123)}.$$ Moreover, the generating function is 
$$\mathcal{A}^{(312, 2341)}(x) = \sum_{n \geq 0}a_n^{(312, 2341)}x^n = \frac{x(4x^2 -2x + 1)}{(1-x)(1-4x+5x^2-3x^3)}.$$
\end{lem}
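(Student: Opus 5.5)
Invert first: by Lemma~\ref{lem:inversion}, $a_n^{(312,2341)} = a_n^{(231,2341^{-1})}$, and $2341^{-1} = 4123$. Since $4123$ begins with its largest entry, Lemma~\ref{lem: formula affine avoid n---} applies with $p = 4123$, $k = 4$, and tail $p_{[2:4]} = 123$. This gives
\[
a_n^{(312,2341)} = \sum_{j=0}^{n-1}(n-j)\,s_j^{(231,4123)}\,s_{n-j-1}^{(231,123)},
\]
which is the first claim.

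For the generating function, I will mimic the proof of Lemma~\ref{lem: affine for 4321}. Write $S_1(x) = \mathcal{S}^{(231,4123)}(x)$ and $S_2(x) = \mathcal{S}^{(231,123)}(x)$.

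\begin{itemize}
\item By Lemma~\ref{lem: perm generating functions}, $S_1(x) = (1-x)^3/(1-4x+5x^2-3x^3)$.
\item The pair $\{231,123\}$ is in the $\binom{n}{2}+1$ family of Lemma~\ref{lem:pattern avoiding perm}, so Lemma~\ref{lem: generating function perm} gives $S_2(x) = (1-2x+2x^2)/(1-x)^3$.
\end{itemize}

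The sum is a convolution in which the weight $(n-j)$ attaches to the $S_2$ index $k = n-j-1$ as $(k+1)$. Therefore
\[
\mathcal{A}^{(312,2341)}(x) = x\,S_1(x)\bigl(S_2(x) + x S_2'(x)\bigr).
\]
The constant term vanishes because of the leading factor $x$, consistent with the convention $a_0 = 0$.

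The only real work is the algebra:
\begin{enumerate}
\item Compute $S_2 + xS_2' = \frac{d}{dx}\bigl(xS_2(x)\bigr)$. Since $xS_2 = (x-2x^2+2x^3)/(1-x)^3$, differentiating should give $(1-x+2x^2)/(1-x)^4$. I will double-check this, since it is the likely source of error.
\item Multiply by $xS_1$. The factor $(1-x)^3$ cancels, leaving
\[
\frac{x(1-x+2x^2)}{(1-x)(1-4x+5x^2-3x^3)}.
\]
\end{enumerate}

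This does not literally match the stated numerator $4x^2-2x+1$. So the main obstacle is reconciling the two, either by locating an arithmetic slip in my derivative or in the stated formula. I will verify with small values, which can be computed directly: $a_1 = 1$, $a_2 = 2\cdot s_0 s_1 + s_1 s_0 = 3$. Expanding my rational function gives $x + 4x^2 + \cdots$, so I will recheck $S_2$ (note $s_1 = 1$ and $s_2 = 2$, so $S_2 = 1 + x + 2x^2 + 4x^3 + \cdots$) and the derivative carefully. I will then adopt whichever expression matches the series coefficients, confirming against $a_3$ computed from the sum.
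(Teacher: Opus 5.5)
Your reduction is exactly the paper's: invert to $(231,4123)$, apply Lemma~\ref{lem: formula affine avoid n---} with tail $123$, and use $\mathcal{A}^{(312,2341)}(x) = x\,S_1(x)\bigl(S_2(x)+xS_2'(x)\bigr)$ with the two quoted generating functions. All of that is correct.

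The gap is the one computation you flagged: your derivative is wrong. As written, the proposal ends with an incorrect formula and an unexecuted plan to ``adopt whichever expression matches,'' so it is not yet a proof. With $N(x)=x-2x^2+2x^3$, the quotient rule gives
\[
\frac{d}{dx}\,\frac{N(x)}{(1-x)^3}=\frac{N'(x)(1-x)+3N(x)}{(1-x)^4}=\frac{(1-5x+10x^2-6x^3)+(3x-6x^2+6x^3)}{(1-x)^4}=\frac{1-2x+4x^2}{(1-x)^4},
\]
not $(1-x+2x^2)/(1-x)^4$. Multiplying by $xS_1(x)$ and cancelling $(1-x)^3$ yields exactly the stated $x(4x^2-2x+1)/\bigl((1-x)(1-4x+5x^2-3x^3)\bigr)$. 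So the statement is right and there is nothing to reconcile.

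Your own sanity check already detects the slip. We have $S_2+xS_2'=\sum_{k\ge0}(k+1)s_k^{(231,123)}x^k=1+2x+6x^2+16x^3+\cdots$, whereas $(1-x+2x^2)/(1-x)^4=1+3x+\cdots$ already fails at the coefficient of $x$. The corrected expression expands as $1+2x+6x^2+16x^3+\cdots$. The final series is $x+3x^2+10x^3+31x^4+\cdots$, which agrees with the convolution: $a_2=3$, $a_3=3\cdot2+2\cdot1+1\cdot2=10$, and $a_4=16+6+4+5=31$.

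Note also that matching a few coefficients does not by itself prove a rational-function identity unless you separately bound the numerator degrees over a common denominator. Doing the quotient rule correctly, as above, is the clean way to finish.
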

\begin{proof}
    If $p = 2341$, then $p^{-1} = 4123$. By Lemma \ref{lem: formula affine avoid n---}, we have that
    \begin{align*}
        a_n^{(312, 2341)} = a_n^{(231, 4123)}= \sum_{j = 0}^{n-1} (n-j)s_{j}^{(231, 4123)}s_{n-j-1}^{(231, 123)}.
    \end{align*}

    \noindent By Lemmas \ref{lem: generating function perm} and \ref{lem: perm generating functions}, we obtain the generating functions $$\mathcal{S}^{(231, 123)}(x) = \frac{1-2x+2x^2}{(1-x)^3}, \quad \mathcal{S}^{(231, 4123)}(x) = \frac{(1-x)^3}{1-4x+5x^2-3x^3}.$$ 
    
    \noindent Since $\sum_{k \geq 0} (k+1)s_k^{(231, 123)}x^k = \mathcal{S}^{(231, 123)}(x) + x\big(\mathcal{S}^{(231, 123)}\big)'(x)$, we get
\begin{align*}
\mathcal{A}^{(312, 2341)}(x)
     &= \sum_{n\ge 0}a_n^{(231, 4123)}x^{n} \\[4pt]
     &= x \cdot \mathcal{S}^{(231, 4123)}(x)\left(\mathcal{S}^{(231, 123)}(x)+ x\mathcal{S}^{(231, 123)'}(x) \right) \\
     &= x\cdot \frac{(1-x)^{3}}{1-4x+5x^{2}-3x^{3}}\cdot
  \frac{4x^{2}-2x+1}{(1-x)^{4}}
  \\
  &=
  \frac{x(4x^2 -2x + 1)}{(1-x)(1-4x+5x^2-3x^3)},
\end{align*}
as desired.
\end{proof}

The rational generating function yields the following closed formula.

\begin{prop}\label{prop: closed form 2341} Let $\alpha \in \mathbb{R}$ and $\beta, \overline{\beta} \in \C$ be the real and complex-conjugate roots of $3x^3 - 5x^2 + 4x - 1$. Set $\lambda_{1}=1/\alpha, \lambda_{2}=1/\beta, \lambda_{3}=1/\bar\beta$. The number of $(312, 2341)$-avoiding affine permutations is $a_n^{(312, 2341)}  = \lambda_1^n + \lambda_2^n + \lambda_3^n - 3$.\end{prop}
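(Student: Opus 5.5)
The plan is to extract coefficients from the rational generating function of Lemma \ref{lem: affine for 2341} by partial fractions. First I would rewrite the denominator factor $1-4x+5x^2-3x^3$ in reciprocal form. If $\alpha,\beta,\overline\beta$ are the roots of $3x^3-5x^2+4x-1$, then the reversed polynomial $x^3\cdot(3x^{-3}-5x^{-2}+4x^{-1}-1) = 3-5x+4x^2-x^3$ has roots $1/\alpha,1/\beta,1/\overline\beta$. So I would instead check directly that $1-4x+5x^2-3x^3 = (1-\lambda_1x)(1-\lambda_2x)(1-\lambda_3x)$ with $\lambda_i$ the reciprocals of roots of $3x^3-5x^2+4x-1$. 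Equivalently, $\lambda_1,\lambda_2,\lambda_3$ are the roots of $t^3-4t^2+5t-3$. Indeed, substituting $t=1/x$ into $3x^3-5x^2+4x-1$ and clearing denominators gives $3-5t+4t^2-t^3$. This confirms the factorization and gives the elementary symmetric data $e_1=\lambda_1+\lambda_2+\lambda_3=4$, $e_2=5$, $e_3=3$. I would also check that the cubic has one real root and a conjugate pair (its discriminant is negative) and that $x=1$ is not a root, so the four poles of $\mathcal{A}^{(312,2341)}$ are distinct.

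The key trick is to avoid computing individual residues. For a cubic $Q(x)=\prod(1-\lambda_i x)$, the series $-xQ'(x)/Q(x)=\sum_i \lambda_i x/(1-\lambda_i x)$ has coefficients exactly $\lambda_1^n+\lambda_2^n+\lambda_3^n$ for $n\ge1$. Similarly, $x/(1-x)$ has all coefficients equal to $1$. So the claim $a_n=p_n-3$ (with $p_n$ the power sum) for $n\ge1$ is equivalent to the identity
$$\frac{x(4x^2-2x+1)}{(1-x)(1-4x+5x^2-3x^3)} = \frac{x(4-10x+9x^2)}{1-4x+5x^2-3x^3} - \frac{3x}{1-x},$$
using $-xQ'(x)=x(4-10x+9x^2)$. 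Both sides have zero constant term, matching $a_0=0$.

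I would verify this by clearing denominators. Multiplying out $(4-10x+9x^2)(1-x)-3(1-4x+5x^2-3x^3)$ gives $4-14x+19x^2-9x^3-3+12x-15x^2+9x^3 = 1-2x+4x^2$, which is the numerator. This is the main and only real check. As a sanity check I would compare small values: $p_1=4$ gives $a_1=1$, and $p_2=e_1^2-2e_2=6$ gives $a_2=3$. These should match the series expansion, whose first terms are $x+(2+4-2)x^2=x+\dots$ (I would recompute carefully: the coefficient of $x^2$ is $1\cdot(1+4)-2=3$). The power-sum description via Newton's identities then gives the closed form $a_n^{(312,2341)}=\lambda_1^n+\lambda_2^n+\lambda_3^n-3$.
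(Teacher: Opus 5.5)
Your proposal is correct and follows the paper's route: the paper simply reads the closed form off the rational generating function of Lemma~\ref{lem: affine for 2341}, without writing out the coefficient extraction. Your identity $\mathcal{A}^{(312,2341)}(x) = -xQ'(x)/Q(x) - 3x/(1-x)$, with $Q(x)=1-4x+5x^2-3x^3=\prod_i(1-\lambda_i x)$, is a clean way to carry out that step. Your numerator computation giving $1-2x+4x^2$ is right, and the values $a_1=1$, $a_2=3$ agree.
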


It is possible to calculate explicit formulas for the $\lambda_i$s. However, the exact forms are not insightful and are therefore omitted here.

\subsection{Enumerating $(312, p)$-avoiding TITOs}\label{sec: titos count}

In this section, we enumerate $(312, p)$-avoiding TITOs for $p \in S_4$ using Theorem \ref{thm: 312 p avoiding affine} and Propositions \ref{prop: closed form 4321} and \ref{prop: closed form 2341}.

\begin{thm}\label{thm: TITO enumeration length 4}
    For $p \in S_4$ and $n \geq 1$, we have that
    \begin{enumerate}[label=(\alph*)]
        \item $f_n^{(312, p)} = \binom{2n}{n}$ if $p$ contains $312$;
        \item $f_n^{(312, 1234)} = 1$; 
        \item $f_n^{(312, 1243)} = 2$;
        \item $f_n^{(312, 4321)} = a_n^{(312, 4321)} = \left(\frac{3+\sqrt{5}}{2}\right)^n + \left(\frac{3-\sqrt{5}}{2}\right)^n - 2^n$; 
        \item $f_n^{(312, 2341)} =  a_n^{(312, 2341)} + 1 = -2 + \lambda_1^{n}+\lambda_2^{n}+\lambda_3^{n}$;
        \item $f_n^{(312, 3421)} = f_n^{(312, 2431)} = a_{n}^{(312, 4321)} + 1 = \left(\frac{3+\sqrt{5}}{2}\right)^n + \left(\frac{3-\sqrt{5}}{2}\right)^n - 2^n + 1$;
        \item $f_n^{(312, 3241)} = f_n^{(312, 3214)} = f_n^{(312, 2314)} =  f_n^{(312, 4321)} + 2^n - 1 = \left(\frac{3+\sqrt{5}}{2}\right)^n + \left(\frac{3-\sqrt{5}}{2}\right)^n - 1$;
        \item $f_n^{(312, 2134)} = f_n^{(312, 2143)} = f_n^{(312, 1324)} = f_n^{(312, 1432)} = f_n^{(312, 1342)} = 2^n$;
    \end{enumerate}
    where $\alpha \in \R$ and $\beta, \overline{\beta} \in \C$ are the roots of $3x^3 - 5x^2 + 4x - 1$, and $\lambda_{1}=1/\alpha, \lambda_{2}=1/\beta, \lambda_{3}=1/\overline{\beta}$.
\end{thm}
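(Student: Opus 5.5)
The plan is to use Lemma \ref{lem:312 blocks}. A $312$-avoiding $n$-TITO has exactly one of three block structures: a single waxing block (an affine permutation, counted by Theorem \ref{thm: 312 p avoiding affine}), the single waning block $[\underline{n,\dots,1}]$, or a waxing block followed by a decreasing waning block. So in each case I write $f_n^{(312,p)} = a_n^{(312,p)} + b_n^{(312,p)}$ and only need the projective count $b_n^{(312,p)}$. Case (a) is immediate, because containing $312$ makes $p$-avoidance automatic, so $f_n^{(312,p)}=f_n^{(312)}=\binom{2n}{n}$. For the other cases I use two simple tools. A waning block contains arbitrarily long decreasing runs $y+kn \succ\!\!\!\!\prec$ in TITO order, i.e.\ $y+(k+1)n \preceq y+kn \preceq y+(k-1)n\preceq\cdots$, with values as large or as small as we like. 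A waxing block contains arbitrarily long increasing runs $x \preceq x+n \preceq x+2n \preceq \cdots$.

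\textbf{Exclusion cases.} For $p \in \{1234, 4321\}$ the decisive facts are these: every waxing block contains $1234$, so $a_n^{(312,1234)}=0$ and no block structure with a waxing block survives, leaving only $[\underline{n,\dots,1}]$; and every waning block contains $4321$, so $f_n^{(312,4321)}=a_n^{(312,4321)}$. This gives (b) and (d). For $p\in\{1243,2341,3421,2431,1432,1342\}$ the single waning block avoids $p$ (it has no ascent, while $p$ does). I then exhibit $p$ in every two-block TITO by combining a waxing run with a waning run.
\begin{itemize}
\item $1243$: take $x, x+n$ (very small) followed by $y+(k+1)n, y+kn$.
\item $2341$: take $x, x+n, x+2n$ followed by a waning element below $x$.
\item $3421$: take $x, x+n$ followed by two waning elements below $x$, in decreasing order.
\item $2431$: take $x$ followed by two large waning elements and then one waning element below $x$.
\item $1432$: take $x$ followed by three larger waning elements.
\item $1342$: take $u, u+Mn, u+(M+1)n$ followed by a waning $w$ with $u<w<u+Mn$.
\end{itemize}
Hence $b_n=1$ in all of these cases. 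Combined with Theorem \ref{thm: 312 p avoiding affine}, this gives (c), (e), the first half of (f), and the $1432,1342$ part of (h).

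\textbf{Increasing-waxing cases.} For $p\in\{2134,2143,1324,3241\}$ I claim the two-block TITOs avoiding $(312,p)$ are exactly those whose waxing window is increasing. For necessity, a lower wall $b\preceq a$ in the waxing block has $a<b<a+n$ by Lemma \ref{lem:windows of 312}, and it extends to an occurrence of $p$:
\begin{itemize}
\item $b,a,a+n,a+2n$ is a $2134$;
\item $b,a,b+n,a+n$ is a $2143$;
\item $a-n,b,a,b+n$ is a $1324$;
\item $b,a$ followed by a large and then a small waning element is a $3241$.
\end{itemize}
For sufficiency, the only descents in such a TITO go from waxing to waning or lie inside the waning block. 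So the descent in $p$, together with the ascent that must come after it, cannot be placed; checking $312$ as well works as in the $(312,213)$ proposition. The two-block count is then the number of ways to split the residues into two nonempty sets, $2^n-2$. Adding $[\underline{n,\dots,1}]$ gives $b_n=2^n-1$. This yields $f=1+2^n-1=2^n$ for $2134,2143,1324$, and $f_n^{(312,3241)}=a_n^{(312,4321)}+2^n-1$.

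\textbf{Main obstacle: $p\in\{3214,2314\}$.} Here $a_n=2^n-1$, so I must show $b_n^{(312,p)}=a_n^{(312,4321)}$. Since nothing follows the waning block, any occurrence of $p$ must place its final (largest) entry in the waning block or in a later translate of the waxing block. So the condition becomes a restriction on how inversions in the waxing window sit relative to the cycle labels. I would translate this through Algorithm \ref{algo: inverse construction}. Each walker contributes a subarray built from subcompass paths, so a $321$ or $231$ inside a walker subarray, with a larger waning label available afterwards, should correspond to a nested pair of arcs (a subwalker with an arc inside a subcompass path) in a walker lying before some cycle vertex. The first step I would try is to show that the allowed walkers are those whose recursion has depth at most one. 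Then I would count diagrams with a cycle of given vertex set times such walkers, and verify that the generating function equals $\frac{x(2x^2-2x+1)}{(1-2x)(1-3x+x^2)}$. Pinning down exactly which walkers are forbidden, particularly the one containing the wrap-around points $v_n,v_1$, is the step I expect to be hardest.
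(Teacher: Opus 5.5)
Your arguments for (a)--(f), (h), and the $3241$ case of (g) are correct and follow the paper's block-structure reasoning. Your explicit witnesses, such as $u,u+Mn,u+(M+1)n,w$ for $1342$, make precise the paper's bare claim that these patterns forbid a waxing block followed by a waning block. Your $a_n+b_n=1+(2^n-1)$ for $2134,2143,1324$ is equivalent to the paper's count $\sum_i\binom{n}{i}$ via Lemma \ref{lem:increasing waxing}. The genuine gap is (g) for $p\in\{3214,2314\}$: you never establish $b_n^{(312,p)}=a_n^{(312,4321)}$, and the plan you sketch would not get there.

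\textbf{The reduction.} It does not depend on cycle labels or on the walker containing $v_n,v_1$. In an occurrence of $3214$ or $2314$ the last entry is the largest. If the third entry lay in the decreasing waning block, the fourth would too and would be smaller. Hence the first three entries lie in the waxing block. Conversely, any $321$ (resp.\ $231$) in the waxing block extends by a waning integer larger than all three, which always exists. So the condition is exactly that the waxing block avoids $321$ (resp.\ $231$), uniformly over all walkers.

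\textbf{The obstruction.} Your proposed obstruction (a nested pair of arcs, or a recursion-depth bound) is not the right one and gives the wrong count. Take $n=4$ with cycle $\{v_1\}$. The walker path $v_2\to v_3\to v_4$ yields $[4,3,2][\underline{1}]$, which contains $3214$ via $4,3,2,5$. The single arc $\gamma_{2,4}$ yields $[3,4,2][\underline{1}]$, which contains $2314$ via $3,4,2,5$. Neither configuration has nested arcs. The correct conditions are: no nested arcs and no path with more than one step for $3214$, and no arc of length greater than one for $2314$ (Lemmas \ref{lem:3214 arc} and \ref{lem:2314 arc}). Neither is a depth bound, since a row of disjoint short arcs is allowed yet recurses arbitrarily deep through $\mathcal{R}$.

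\textbf{The count.} The enumeration still has to be carried out. Conditioning on the first point gives $w_k=\sum_{i<k}w_i$, and likewise for $w'_k$, so both equal $2^{k-1}$. Then sum over the cycle size $k$ and the weak composition $\lambda\in\mathscr{C}^{k}_{n-k}$ of walker sizes, weighted by $1+\lambda_1$ for the position of $v_1$. This gives $\mathcal{B}(x)=xG(x)/(1-x\mathcal{W}(x))$ with $\mathcal{W}(x)=\frac{1-x}{1-2x}$ and $G=\mathcal{W}+x\mathcal{W}'$, which equals $\frac{x(1-2x+2x^2)}{(1-2x)(1-3x+x^2)}=\mathcal{A}^{(312,4321)}(x)$.

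\textbf{A shortcut.} You can avoid Algorithm \ref{algo: inverse construction} entirely. If $u<z<v$ with $u,v$ waxing and $z$ waning, then $v\prec u$ would make $v,u,z$ a $312$. So the waxing block is the increasing concatenation of permutations of the value-gaps between consecutive waning integers. Any $312$, $321$, or $231$ in it is confined to one gap, since its first entry exceeds its last. Hence each gap of size $g$ independently carries one of $s_g^{(312,321)}=s_g^{(312,231)}=2^{g-1}$ permutations, which is exactly $w_g$.
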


We treat these cases separately, organizing them by the techniques used in their proofs.

\subsubsection{Arc diagram approach}\label{subsubsec: arc diagram approach}

We use arc diagrams to prove $$f_n^{(312, 3214)} = f_n^{(312, 2314)} = a_n^{(312, 4321)} + 2^n - 1.$$ Recall that $b_n^{(p,q)}=f_n^{(p,q)}-a_n^{(p,q)}$ counts projective $(p,q)$-avoiding TITOs, where a TITO is projective if it is not an affine permutation. We therefore only need to count the projective TITOs.

From Theorem \ref{thm: 312 p avoiding affine}, we know that $a_n^{(312, 3214)} = a_n^{(312, 2314)} = 2^n - 1$, so it suffices to show $$b_n^{(312, 3214)} = b_n^{(312, 2314)} = a_n^{(312, 4321)}.$$

Observe that a $312$-avoiding TITO is projective if and only if it has a waning block: by Lemma \ref{lem:312 blocks}, such a TITO consists of at most two blocks, the first waxing and the second waning, and it is an affine permutation exactly when it consists of a single waxing block.

We now translate the $3214$- and $2314$-avoidance conditions into the language of arc diagrams and enumerate the corresponding diagrams. We say an arc $\gamma_{a, b}$ is \dfn{nested} in $\gamma_{c, d}$ if $\gamma_{a, b}$ lies in the region bounded by $\gamma_{c, d}$ and the portion of the outer boundary running clockwise from $v_c$ to $v_d$; equivalently, $\gamma_{a,b}$ stays on our left as we walk along $\gamma_{c,d}$. 

\begin{lem}\label{lem:3214 arc}
    The number of projective $(312, 3214)$-avoiding TITOs $b_n^{(312, 3214)}$ is equal to the number of noncrossing arc diagrams $A$ satisfying: 
    \begin{enumerate}
        \item $A$ has a nonempty cycle $C$;
        \item every walker $W_i \in \mathsf{W}(A)$ contains no nested arcs;
        \item every walker $W_i \in \mathsf{W}(A)$ contains no paths with more than one step.
    \end{enumerate}
\end{lem}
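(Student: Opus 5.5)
By Theorem \ref{thm:arcdiagram312}, $\cA$ restricts to a bijection from projective $312$-avoiding TITOs onto the noncrossing arc diagrams that contain a cycle. Here we use Lemma \ref{lem:cycle} together with the observation above that a $312$-avoiding TITO is projective exactly when it has a waning block. So it suffices to show the following: for a $312$-avoiding TITO $\preceq$ with a waning block, $\preceq$ avoids $3214$ if and only if every walker of $\cA(\preceq)$ has no nested arcs and no path of two or more steps. We will work throughout with the window notation $\mathcal{W}_{\mathrm{wax}}\mathcal{W}_{\mathrm{wan}}$ produced by Algorithm \ref{algo: inverse construction}, using three of its properties. The waning block is strictly decreasing. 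Each walker $W_i$ yields a contiguous subarray $\mathcal{W}_i$ whose labels form an integer interval. Distinct subarrays, and translates of the waxing window, appear in increasing order of their label intervals.

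\textbf{Reduction to a single walker.} A $3214$ occurrence $x\preceq y\preceq z\preceq w$ has $z<y<x<w$. Its last entry is its largest, and its first three entries form a $321$. We will show that if the $321$ part $x,y,z$ lies in one translated subarray $\mathcal{W}_i+tn$, then a suitable $w$ always exists. Indeed, the waning block comes after every waxing element and contains arbitrarily large integers, so we can take $w$ to be a large element of the waning block. This step is where projectivity is used. Conversely, any $321$ inside the waxing block must lie in a single translated subarray, because inversions between different subarrays or translates are impossible. A $321$ that uses waning elements must have its waxing elements first. In that case the same choice of $w$ works whenever $x,y,z$ are all waxing. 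If some of $x,y,z$ lie in the waning block, we still need to check that no $3214$ can be completed. The intended argument: the waning elements then form a decreasing tail, so $w$ must again be waning and must come after $z$, hence $w<z$, which contradicts $w$ being the largest entry. Combining these, $\preceq$ avoids $3214$ if and only if no $\mathcal{W}_i$ contains a $321$ pattern.

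\textbf{Translation to arcs.} We then show that $\mathcal{W}_i$ (the output of Algorithm \ref{algo:support}) avoids $321$ exactly when $W_i$ has no path of two or more steps and no nested arcs. The argument is by strong induction on the number of arcs, following the recursion in the correctness proof of Algorithm \ref{algo:support}. If the subcompass path $P$ has $k\ge 2$ steps, the initial block $[a_{k+1},\dots,a_1]$ is already a $321$. Suppose some arc $\gamma$ is nested inside an arc of $P$, say $P$ is the single arc $(a_1,a_2)$ and $\gamma$ lies in the interval $[a_1+1,a_2-1]$. That subwalker belongs to $\mathcal{L}$, so it is placed to the left of $[a_2,a_1]$. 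Its own lower wall $u>v$ satisfies $a_1<v<u<a_2$, so $u,v,a_1$ is a $321$. More generally, a nested pair anywhere lies inside some subwalker, and the inductive hypothesis handles it. For the converse, suppose every arc has one step and no arc is nested. Then every subwalker lying strictly between $a_1$ and $a_2$ is arc-free, so its output is increasing. The subwalkers to the left of $a_1$ and to the right of $a_2$ are handled by induction, and a $321$ cannot straddle subarrays because labels increase across concatenations. Hence $\mathcal{W}_i$ avoids $321$.

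\textbf{Main obstacle.} The delicate step is the converse in the reduction: ruling out $3214$ occurrences that mix the two blocks, and verifying that the explicit ordering of subarrays (the $\mathcal{L}$ pieces coming before $P$) really turns nesting into a $321$ and nothing else. I would handle this through a careful case analysis on which block contains each of $x,y,z,w$, relying on Lemma \ref{lem:312 blocks} and the interval property of walker labels.
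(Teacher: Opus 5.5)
Your proposal is correct and follows the paper's proof: you reduce to $321$-avoidance of the waxing block (using the decreasing waning block, with a large waning element as the final entry), localize any $321$ to one translated walker subarray, and induct on arcs through Algorithm~\ref{algo:support}; the one difference is that the paper reads the converse directly off cover relations (e.g.\ $\phi(v_b)\preceq\phi(v_a)\preceq\phi(v_d)\preceq\phi(v_c)$ for $\gamma_{a,b}$ nested in $\gamma_{c,d}$) rather than inside the induction. One wording fix: labels do not increase from the middle subwalker into $[a_2,a_1]$, so your no-straddling claim really rests on that middle output being increasing, which you do establish.
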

\begin{proof}
    By Lemmas \ref{lem:312 blocks} and \ref{lem:cycle}, the arc diagram $\mathcal{A}(\preceq)$ of any projective $312$-avoiding TITO must include exactly one cycle that is uniquely determined by its waning block. Moreover, the waning block must be strictly decreasing. Thus, a projective $312$-avoiding TITO is also $3214$-avoiding if and only if the waxing block of $\preceq$ avoids $321$. It therefore suffices to show that this waxing block avoids $321$ if and only if the walkers in $\mathsf{W}(\mathcal{A}(\preceq))$ have neither nested arcs nor paths with more than one step. 
    
    We first consider the forward direction, and claim by induction on $|E(W)|$ that if a walker $W$ has no nested arcs and no path with more than one step, then the subarray $\mathcal{W}$ produced by Algorithm~\ref{algo:support} avoids $321$. The case $E(W) = \varnothing$ is clear, as $\mathcal{W}$ is then increasing. Otherwise the subcompass path is a single arc, say from $v_a$ to $v_b$, so Algorithm~\ref{algo:support} returns
    $$
      \mathcal{W} = D_0 \circ D_1 \circ [\phi(v_b), \phi(v_a)] \circ E,
    $$
    where $D_0$, $D_1$ and $E$ are the subarrays arising from the subwalkers whose compass labels are smaller than $\phi(v_a)$, lie strictly between $\phi(v_a)$ and $\phi(v_b)$, and are larger than $\phi(v_b)$, respectively. An arc in the second of these subwalkers would be nested in $\gamma_{a,b}$, and an arc in the first would have a source with compass label smaller than $\phi(v_a)$, contradicting the minimality in the choice of $v_a$. Thus, $D_0$ and $D_1$ are increasing. The third subwalker inherits both hypotheses and has fewer arcs, so $E$ avoids $321$ by induction. Every entry of $D_0$ is smaller than $\phi(v_a)$, every entry of $D_1$ lies between $\phi(v_a)$ and $\phi(v_b)$, and every entry of $E$ is larger than $\phi(v_b)$, so any occurrence of $321$ in $\mathcal{W}$ lies in $E$, and there is none. 
    
    Since $A$ has a cycle, Algorithm~\ref{algo: inverse construction} returns $\mathcal{W}_{\mathrm{wax}} = \mathcal{W}_{i_1} \circ \cdots \circ  \mathcal{W}_{i_r}$, with the walkers in increasing order of subscript. The compass labels of distinct walkers form disjoint integer intervals ordered by their subscripts, so an occurrence of $321$ would lie in a single $\mathcal{W}_{i_s}$. Thus, by our earlier argument, $\mathcal{W}_{\mathrm{wax}}$ avoids $321$. Finally, since the compass labels are $n$ consecutive integers, every entry of $\mathcal{W}_{\mathrm{wax}}$ is smaller than every entry of $\mathcal{W}_{\mathrm{wax}}$ increased by $n$. An occurrence of $321$ appears in $\preceq$ as a decreasing sequence of integers, while the translates of $\mathcal{W}_{\mathrm{wax}}$ occur in $\preceq$ in increasing order of their entries, so all three of its elements lie in a single translate of $\mathcal{W}_{\mathrm{wax}}$.
    Hence, the waxing block avoids $321$.

    We now show the converse. Let $\phi$ be the compass labeling of $A$. If $W_i$ has an arc $\gamma_{a,b}$ nested in an arc $\gamma_{c, d}$, then $\phi(v_c) < \phi(v_a) < \phi(v_b) < \phi(v_d)$. Moreover, $\phi(v_b) \preceq \phi(v_a) \preceq \phi(v_d) \preceq \phi(v_c)$, so $\phi(v_b) ,\phi(v_a), \phi(v_c)$ form a $321$ pattern. If $W_i$ has a path with more than one step, say $\gamma_{a, b}$ and $\gamma_{b,c}$, then $\phi(v_a) < \phi(v_b) < \phi(v_c)$ but $\phi(v_c) \preceq \phi(v_b) \preceq \phi(v_a)$, again forming a $321$ pattern.
\end{proof}

We now characterize the $2314$ pattern avoidance constraint in terms of arc diagrams.
\begin{lem}\label{lem:2314 arc}
    The number of projective $(312, 2314)$-avoiding TITOs $b_n^{(312, 2314)}$ is equal to the number of noncrossing arc diagrams $A$ satisfying:
    \begin{enumerate}
        \item $A$ has a nonempty cycle $C$; 
        \item every walker $W_i \in \mathsf{W}(A)$ contains no arc with length greater than one.
    \end{enumerate}
\end{lem}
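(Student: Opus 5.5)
Following the template of Lemma \ref{lem:3214 arc}, I would first reduce to a condition on the waxing block. By Lemmas \ref{lem:312 blocks} and \ref{lem:cycle}, a projective $312$-avoiding TITO $\preceq$ has exactly one cycle $C$ in $\cA(\preceq)$, and its waning block is strictly decreasing and comes last. Suppose an occurrence of $2314$ used the waning block. Its last entry $4$ is the largest integer, and the entries preceding it must come before it in $\preceq$. Since the waning block is decreasing, only the final entry can lie in the waning block while the first three lie in the waxing block. So I would check whether such mixed occurrences can happen, using the same interval argument as at the end of the proof of Algorithm \ref{algo: inverse construction}. That argument shows that inversions $x>y$ within the waxing block lie in one translated walker subarray whose labels form an integer interval. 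If a $231$ inside the waxing block has its entries in one walker interval, then a larger waning element $z$ exceeding all of them would still be possible. So a mixed occurrence is not obviously excluded. I would handle this carefully: a waxing $231$ already lies inside a walker, and I expect that the condition derived below (no walker arc of length $>1$) forces the walker to contain no $231$ at all. In that case mixed occurrences are vacuous. The condition to prove is therefore: the waxing block avoids $2314$, equivalently avoids $231$ within walker subarrays, if and only if every walker arc has length one.

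\emph{Forward direction (no long arcs implies avoidance).} If every walker arc has length one, then every arc joins consecutive compass labels. I would show by induction with Algorithm \ref{algo:support} that the subcompass path is a run of consecutive labels $a,a+1,\dots,a+k$. It therefore contributes the reversed block $[a+k,\dots,a]$, and no subwalker lies strictly between path labels. The walker output is then a concatenation of increasing pieces and decreasing runs of consecutive integers, with the labels of the pieces increasing from left to right. Such a layered sequence avoids $231$, hence $2314$. Across walkers and across translates, labels increase by blocks, so any $2314$ (indeed any $231$) would lie in one layer; this is impossible. This also settles mixed occurrences with the waning block, since their first three entries would form a $231$ in the waxing block.

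\emph{Converse (a long arc gives a $2314$).} Take an arc $\gamma_{a,b}$ in a walker with $\phi(v_b)-\phi(v_a)\ge 2$, and let $c$ be a label strictly between them. Because $\phi(v_a)\preceq$-covers $\phi(v_b)$ as a lower wall, we have $\phi(v_b)\preceq\phi(v_a)$ adjacent. The label $c$ lies in the same walker interval, and by noncrossing its position in the output is either before $\phi(v_b)$ or after $\phi(v_a)$. If $c$ comes before, then $c,\phi(v_b),\phi(v_a),\phi(v_b)+n$ is a $2314$ pattern. If $c$ comes after, then $\phi(v_b),\phi(v_a),c$ with $c+n$ appended fails; instead I would use $\phi(v_b)-n$ or a translate to obtain $2,3,1,4$.

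\emph{Main obstacle.} Finding a valid witness in the second subcase of the converse is the step I expect to be hardest. The first thing I would try there is the occurrence $\phi(v_b)\,,\,c+n?\,$. Failing that, I would use the nesting structure to argue that $c$ in fact precedes $\phi(v_b)$ whenever the arc is a subcompass arc, and then reduce to the first subcase by choosing $\gamma_{a,b}$ minimal.
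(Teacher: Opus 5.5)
Your overall structure matches the paper's: reduce to the waxing block, show a layered structure when every walker arc has length one, and extract a pattern from a long arc. Your forward direction is sound, and your ``layered sequence'' argument is more explicit than the paper's.

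\textbf{The gap is in the converse.} You leave the subcase where $c$ comes after $\phi(v_a)$ unresolved. The remedies you sketch (translating by $n$, or restricting to a minimal or subcompass arc) are not carried out. The missing observation is that this subcase cannot occur. Write $A=\phi(v_a)$ and $B=\phi(v_b)$, so $A<c<B$. If $B\preceq A\preceq c$, then the first entry is the largest and the second is the smallest. Hence $B,A,c$ is an occurrence of $312$, contradicting that $\preceq$ is $312$-avoiding.

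Equivalently, you can see this from Algorithm~\ref{algo:support}. Every arc lies on the subcompass path of some recursive call. The subwalkers nested strictly under that arc are placed in $\mathcal{L}$, hence to the left of the path block containing $B,A$.

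So in fact $c\preceq B\preceq A$ always holds. Since $A$ covers $B$ and $B\prec B+n$ in the waxing block, you also get $A\prec B+n$. Therefore $c,B,A,B+n$ is a $2314$, which is exactly your first subcase. The paper uses this same step when it writes $\phi(v_c)\preceq\phi(v_b)\preceq\phi(v_a)$.

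\textbf{A smaller point.} Your treatment of mixed occurrences is more tentative than it needs to be. The first three entries of any $2314$ lie in the waxing block and form a $231$ there. Conversely, any $231$ $x\preceq y\preceq z$ in the waxing block extends to a $2314$ by appending $z+kn$ for large $k$, since $z\preceq z+kn$. Hence the TITO avoids $2314$ if and only if its waxing block avoids $231$. This is the clean reduction the paper uses, and it removes the need for any separate discussion of the waning block.
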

\begin{proof}
    By the same reductions as in the proof of Lemma \ref{lem:3214 arc}, it suffices to show that this waxing block avoids $231$ if and only if the walkers in $\mathsf{W}(\mathcal{A}(\preceq))$ have no arcs of length greater than one. 
    
    We first consider the forward direction. If the walkers contain no arc with length greater than one, then by Algorithm \ref{algo: inverse construction}, no subwalker can contain a path with arcs of length greater than one. Thus, any path within a subwalker (if one exists) must consist entirely of length-$1$ arcs, giving rise to a subarray of strictly decreasing consecutive integers. Such a subarray avoids $231$, so the resulting waxing window subarray avoids $231$ as well. Moreover, distinct walkers have disjoint compass-label intervals, ordered by their subscripts, so concatenating the subarrays $\mathcal{W}_i$ in increasing order of subscript cannot create an occurrence of $231$ using two different walkers; such an occurrence would need its first entry in $\preceq$ to exceed its last, whereas every entry of $\mathcal{W}_a$ is smaller than every entry of $\mathcal{W}_b$ when $a < b$. Hence $\mathcal{W}_{\mathrm{wax}}$ avoids $231$.

Since the compass labels are $n$ consecutive integers, every entry of $\mathcal{W}_{\mathrm{wax}}$ is smaller than every entry of $\mathcal{W}_{\mathrm{wax}}$ increased by $n$. An occurrence of $231$ appears in $\preceq$ as a triple $i_2 \preceq i_3 \preceq i_1$ with $i_1 < i_2 < i_3$, whose last entry is smaller than its first. If $i_2$ and $i_1$ were in different translates, the translate containing $i_2$ would precede the one containing $i_1$ and we would get $i_2 < i_1$, which is a contradiction. Thus, all three entries lie in a single translate of $\mathcal{W}_{\mathrm{wax}}$, and the waxing block avoids $231$.

    Conversely, consider a walker containing an arc $\gamma_{a,b}$ with length greater than one. Let $\phi$ be the compass labeling of $A$. Then, the subwalker under $\gamma_{a, b}$ is nonempty and includes at least one boundary point, which we call $v_c$. Then $\phi(v_a) < \phi(v_c) < \phi(v_b)$, whereas $\phi(v_c) \preceq \phi(v_b) \preceq \phi(v_a)$, which is a 231-pattern.
\end{proof}

Next, we enumerate the walkers corresponding to projective $(312,p)$-avoiding TITOs for $p \in \{3214, 2314\}$. For convenience, we introduce the following notation:
\begin{align*}
    w_k &:= \# \{\text{walkers $W$ with $|V(W)| = k$, no nested arcs or path with more than one step}\}; \\
    w_k' &:=\# \{\text{walkers $W$ with $|V(W)| = k$ and no arcs with length greater than one}\}.
\end{align*}

Let $w_0 = w_0' = 1$.

\begin{lem}\label{lem: walker count}
    We have that $w_k = w_k' = 2^{k-1}$ for $k \geq 1$. 
\end{lem}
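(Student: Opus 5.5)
A walker with $k$ boundary points, equipped with its compass labeling, is a linearly ordered set of points $1,\dots,k$ (in compass-label order). Every arc goes from a smaller label to a larger one, the arcs are pairwise noncrossing, and each point has at most one incoming and at most one outgoing arc. The plan is to encode each of the two families by a binary choice at each of the $k-1$ gaps between consecutive points, which gives $2^{k-1}$ in both cases.

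For $w_k'$, every arc has length one, so every arc joins two consecutive points $t$ and $t+1$, and it is directed from $t$ to $t+1$ because labels increase along arcs. Any set of such arcs is automatically noncrossing, and each point then has at most one incoming and at most one outgoing arc. Hence admissible walkers correspond exactly to subsets of the $k-1$ gaps, and $w_k' = 2^{k-1}$. One should still check that every such configuration really occurs as a walker, i.e.\ as a component inside some diagram with a cycle. This holds because a walker is determined by its point count together with its internal arcs: the counting of $w_k$ and $w_k'$ is done abstractly, with the ambient cycle fixed separately.

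For $w_k$, the walker has no nested arcs and no path with two or more steps. I will first show that every arc has length one. Suppose an arc $t\to s$ has $s>t+1$. No arc from an interior point $u$ with $t<u<s$ can leave the interval $(t,s)$, since it would cross $t\to s$. The only exception is an arc ending at $t$ or $s$: an arc ending at $t$ is impossible because labels increase along arcs, and an arc $u\to s$ is excluded since $s$ already has an incoming arc. Thus any arc touching an interior point lies inside $(t,s)$ and is nested. That alone does not forbid interior points with no arcs, so this argument fails. Instead I will count directly. The absence of two-step paths means the arcs form a matching in which no point is both a source and a target. Together with noncrossing and no nesting, the arcs are non-overlapping intervals $[t_i,s_i]$ placed left to right and separated by isolated points. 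Such a configuration is equivalent to choosing a set of arcs with pairwise disjoint closed spans, where spans may not even share endpoints.

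The remaining step is to count these configurations with a recursion on the first point. Either point $1$ is isolated, leaving $w_{k-1}$ choices, or point $1$ is the source of an arc $1\to j$. In the second case the points $2,\dots,j-1$ must be isolated, since any arc among them would be nested, and they have no other option. The rest, on points $j+1,\dots,k$, contributes $w_{k-j}$. This gives
\[w_k = w_{k-1}+\sum_{j=2}^{k} w_{k-j},\]
with $w_0=1$. By induction $w_k = 2^{k-1}$: for $k\ge2$ the right side is $\sum_{i=0}^{k-1} w_i - w_{k-2}+w_{k-2}$, which after the bookkeeping equals $1+\sum_{i=1}^{k-1}2^{i-1}=2^{k-1}$. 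The main care point is this bookkeeping, together with verifying that arcs of arbitrary length are allowed, since interior isolated points under an arc do not create nesting.
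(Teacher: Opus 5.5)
Your proof is correct. For $w_k$ it is the paper's argument: you condition on the point with smallest compass label, which is either isolated or the source of an arc $1\to j$ whose interior points are forced to be isolated, and this gives $w_k=\sum_{i=0}^{k-1}w_i$.

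For $w_k'$ you take a different and shorter route. The paper runs a parallel recursion: the first point is either isolated or starts a maximal path of $j$ unit-length steps, which leaves $w'_{k-j-1}$ choices. You instead observe that the admissible arc sets are exactly the arbitrary subsets of the $k-1$ adjacent pairs, which makes $2^{k-1}$ immediate. The paper's version buys only the observation that both sequences satisfy the same recurrence.

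In a write-up, make three cleanups:
\begin{enumerate}
\item Drop the abandoned attempt to show that every arc has length one.
\item Replace ``separated by isolated points'' with your own correct restatement as pairwise disjoint closed spans. Spans can be adjacent; for example, $1\to2$ and $3\to4$ have nothing between them.
\item Write the final step simply as $w_k=\sum_{i=0}^{k-1}w_i=1+\sum_{i=1}^{k-1}2^{i-1}=2^{k-1}$.
\end{enumerate}
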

\begin{proof}
    Fix a walker $W$. Denote the boundary point in $W$ with the smallest compass label by $v_i$. We condition on what happens at $v_i$ because $v_i$ has no incoming arc.

    We first consider $w_k$. If $v_i$ has no outgoing arc, then deleting $v_i$ from $W$ gives a bijection with walkers on $k-1$ boundary points with no nested arcs and no path of more than one step, so this case contributes $w_{k-1}$ walkers. If instead $v_i$ has an outgoing arc of length $j$, where $1 \leq j \leq k-1$, then the points enclosed by this arc must form a subwalker with no arcs of its own, since any such arc would either be nested inside the arc from $v_i$ or extend the path starting at $v_i$ to more than one step. Hence the enclosed subwalker is uniquely determined by $j$. Deleting this arc, both of its endpoints, and the $j-1$ boundary points strictly enclosed by it gives a bijection with walkers on $k-j-1$ boundary points satisfying the same restrictions. Thus, this case contributes $w_{k-j-1}$ walkers. Summing over all cases, we have that
    $$w_k = \sum_{i = 0}^{k-1} w_i,$$
    with $w_0 = 1$. This recursive relation implies that $w_k = 2^{k-1}$ for $k \geq 1$. 

    Now, we consider $w_k'$. We again have two cases: $v_i$ has either no outgoing arc or is in a path with $j$ steps, where $1 \leq j \leq k-1$ and every step has length $1$. In the first case, deleting $v_i$ gives a bijection with the walkers counted by $w'_{k-1}$. In the second case, since every step has length $1$, the subwalkers enclosed by the path must be empty, so deleting $v_i$ and its path gives a bijection with the walkers counted by $w'_{k-j-1}$. Thus, we obtain the same recurrence relation
    \begin{align*}
        w_k' = \sum_{i = 0}^{k-1} w_i',
    \end{align*}
    with $w_0' = 1$. This implies $w_k' = 2^{k-1}$ for $k \geq 1$. 
\end{proof}

We introduce a few more definitions for our enumeration. A \dfn{weak composition} of a nonnegative integer $n$ is a way of writing $n$ as an ordered sum of nonnegative integers, where the order of the summands (also known as \dfn{parts} of the composition) matters. Let $\mathscr{C}_{n}^k := \{\text{weak compositions of $n$ with exactly $k$ parts}\}$.

\begin{thm}
    The number of projective $(312, p)$-avoiding TITOs for $p \in \{3214, 2314\}$ is
    \begin{align*}
        b_n^{(312, 3214)} = b_n^{(312, 2314)} = \sum_{k = 1}^{n} \left(\sum_{\lambda \in \mathscr{C}_{n-k}^{k}} (1+\lambda_1) w_{\lambda}\right),
    \end{align*}
    where $w_{\lambda} = w_{\lambda_1} \cdots w_{\lambda_k}.$

    Moreover, we get $$b_n^{(312, 3214)} = b_n^{(312, 2314)} = a_n^{(312, 4321)} = \left(\frac{3+\sqrt{5}}{2}\right)^n + \left(\frac{3-\sqrt{5}}{2}\right)^n - 2^n.$$
\end{thm}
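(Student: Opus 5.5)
By Lemmas \ref{lem:3214 arc} and \ref{lem:2314 arc}, it suffices to count noncrossing arc diagrams on $n$ boundary points that have a nonempty cycle $C$ and whose walkers are each of the restricted type counted by $w_k$ (respectively $w_k'$). Since Lemma \ref{lem: walker count} gives $w_k = w_k'$ for all $k$, the two counts coincide once we show that the diagram is determined by the cycle together with an independent choice of walker in each gap. So I will do the count once, with $w$.

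First I will describe such a diagram by its cycle and its walkers. Suppose the cycle has $k \geq 1$ vertices $c_1 < \cdots < c_k$ in $[n]$. Removing $C^\circ$ leaves the remaining $n-k$ boundary points split into $k$ clockwise gaps. The gaps are: between $c_t$ and $c_{t+1}$ for $1 \le t \le k-1$, and the wrap-around gap from $c_k$ through $v_n, v_1$ to $c_1$. Let $\lambda_1$ be the size of the wrap-around gap, and let $\lambda_2, \ldots, \lambda_k$ be the other gap sizes in order. Then $\lambda \in \mathscr{C}_{n-k}^k$.

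Next, the structure within gaps. Every arc not on $C$ has both endpoints in a single walker, and walkers are exactly the nonempty gaps. So a diagram is equivalent to a cycle together with, for each gap of size $\lambda_i$, an arbitrary admissible walker on those points. Noncrossing is automatic across gaps, and the cycle arcs are drawn around the puncture. A walker on $m$ points is counted by $w_m$, with an empty gap contributing $w_0 = 1$. Hence for a fixed cycle the count is $w_\lambda$.

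It remains to count cycles with a given gap sequence $\lambda$. Choosing $\lambda$ fixes the cycle up to rotation, that is, up to where the wrap-around gap sits relative to $v_n, v_1$. The wrap-around gap has $\lambda_1$ points, and the seam between $v_n$ and $v_1$ can lie in any of the $\lambda_1 + 1$ slots of this gap. Equivalently, $c_1 \in \{1, \ldots, \lambda_1 + 1\}$, and then all of $c_1, \ldots, c_k$ are determined. This gives the factor $(1 + \lambda_1)$ and hence the stated double sum. I expect this bookkeeping (identifying $\lambda_1$ with the gap containing the seam) to be the main point needing care, along with the fact that any set of $k \geq 1$ points supports exactly one cycle.

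Finally, I will evaluate the sum with generating functions. Let $W(x) = \sum_{m \ge 0} w_m x^m = \frac{1-x}{1-2x}$, and set
\[
U(x) = \sum_{m\ge0}(m+1)w_m x^m = \bigl(xW(x)\bigr)' = W(x) + xW'(x).
\]
Weighting each cycle vertex by $x$, the first gap by $U$, and the remaining $k-1$ gaps by $W$ gives
\[
\mathcal{B}(x) = \sum_{k\ge1} x^k U(x) W(x)^{k-1} = \frac{xU(x)}{1 - xW(x)}.
\]
Here $W(x) = \mathcal{S}^{(231,321)}(x)$, so $U$ is exactly the factor $\mathcal{S}^{(231,321)} + x(\mathcal{S}^{(231,321)})'$ appearing in Lemma \ref{lem: affine for 4321}. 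Moreover,
\[
1 - xW(x) = \frac{1-3x+x^2}{1-2x}.
\]
Therefore
\[
\frac{x}{1-xW(x)} = x\,\frac{1-2x}{1-3x+x^2} = x\,\mathcal{S}^{(231,4321)}(x).
\]
It follows that $\mathcal{B}(x) = \mathcal{A}^{(312,4321)}(x)$. Proposition \ref{prop: closed form 4321} then gives the closed form.
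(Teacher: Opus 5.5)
Your proof is correct and follows the paper's argument. Using Lemmas \ref{lem:3214 arc}, \ref{lem:2314 arc} and \ref{lem: walker count}, you reduce to choosing a cycle and then an independent walker in each gap, and you sum $x^kU(x)W(x)^{k-1}$ to get $xU/(1-xW)$. The differences are cosmetic: you get the factor $1+\lambda_1$ by sliding the seam through the wrap-around gap (the paper splits on whether $v_1$ lies on the cycle or in a walker), and you match $W=\mathcal{S}^{(231,321)}$ and $x/(1-xW)=x\mathcal{S}^{(231,4321)}$ factor by factor with Lemma \ref{lem: affine for 4321} rather than simplifying to the explicit rational function.
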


\begin{proof}
    By Lemmas \ref{lem:3214 arc} and \ref{lem:2314 arc}, both $b_n^{(312,3214)}$ and $b_n^{(312,2314)}$ count noncrossing arc diagrams with a nonempty cycle $C$ in which every walker satisfies the stated arc restriction; by Lemma \ref{lem: walker count}, there are $w_m = w_m' = 2^{m-1}$ such walkers on $m$ boundary points for each $m \geq 1$. For such a diagram containing a $k$-cycle, reading off the number of boundary points in each walker, in clockwise order beginning with the gap containing $v_1$ (or, if $v_1$ lies on the cycle, with the gap immediately following $v_1$), gives a weak composition of $n-k$ into $k$ parts; conversely, any choice of $k$-cycle position together with a weak composition of $n-k$ into $k$ parts and a choice of walker of the appropriate size in each part determines such a diagram uniquely. For instance, in Example \ref{ex: compass labeling}, we obtain the weak composition $(5,4,0)$ of $9$.

    To obtain $b_n^{(312, 3214)} = b_n^{(312, 2314)}$, we distinguish two cases according to the position of $v_1$ for both $b_n^{(312, 3214)}$ and $b_n^{(312, 2314)}$:
    the boundary point $v_1$ is either in the cycle or in a walker. If $v_1$ lies in a $k$-vertex cycle, then by the correspondence above, the diagrams in this case are in bijection with pairs consisting of a weak composition $\lambda\in\mathscr C_{n-k}^k$ and, for each part $\lambda_j$, a choice of one of the $w_{\lambda_j}$ walkers on $\lambda_j$ boundary points; since $v_1$ itself lies in the cycle and contributes no further choice, this case contributes
    $$\sum_{k = 1}^{n} \left(\sum_{\lambda \in \mathscr{C}_{n-k}^{k}} w_{\lambda}\right)$$
    diagrams, for both $b_n^{(312, 3214)}$ and $b_n^{(312, 2314)}$.

    If instead $v_1$ lies in a walker, then we again condition on the size $k$ of the cycle and the resulting weak composition $\lambda \in \mathscr{C}_{n-k}^k$ of walker sizes; here $v_1$ may additionally occupy any of the $\lambda_1$ boundary points of the first walker in clockwise order, so this case contributes
    $$\sum_{k = 1}^{n} \left(\sum_{\lambda \in \mathscr{C}_{n-k}^{k}} \lambda_1 w_{\lambda}\right)$$
    diagrams, for both $b_n^{(312, 3214)}$ and $b_n^{(312, 2314)}$. Thus, we obtain the equality $b_n^{(312, 3214)} = b_n^{(312, 2314)}$, and we now compute their value using generating functions.

    For simplicity, we use $\mathcal{B}(x)$ to denote $\mathcal{B}^{(312, 3214)}(x) = \mathcal{B}^{(312, 2314)}(x)$ in this proof. To show $b_n^{(312, 3214)} = b_n^{(312, 2314)} = a_n^{(312, 4321)}$, we prove that $$\mathcal{B}(x) = \mathcal{A}^{(312, 4321)}(x).$$
    
    By Lemma \ref{lem: walker count}, the generating function for the sequence $(w_n)_{n\geq0}$ is $\mathcal{W}(x) = \frac{1-x}{1-2x}$ because $w_0 = 1$ and $w_k = w_k' = 2^{k-1}$ for $k \geq 1$. Let $S_{n,k}$ denote the inner sum in the expression for $b_n^{(312,3214)}$ given in the theorem statement, so
$$
S_{n,k} = \sum_{\lambda \in \mathscr{C}_{n-k}^{k}} (1+\lambda_1) w_{\lambda} = \sum_{\lambda_1+\dots+\lambda_k=n-k} (1+\lambda_1) w_{\lambda_1} \cdots w_{\lambda_k}.
$$
This sum can be split into two sums
$$
S_{n,k} = \sum_{\lambda_1+\dots+\lambda_k=n-k} w_{\lambda_1} \cdots w_{\lambda_k} + \sum_{\lambda_1+\dots+\lambda_k=n-k} \lambda_1 w_{\lambda_1}  \cdots w_{\lambda_k},
$$
where the first sum is the coefficient of $x^{n-k}$ in the expansion of $(\mathcal{W}(x))^k$, i.e.
$$
\sum_{\lambda_1+\dots+\lambda_k=n-k} w_{\lambda_1} \cdots w_{\lambda_k} = [x^{n-k}] (\mathcal{W}(x))^k.
$$
For the second sum, we define a new generating function $D(x)$ for the sequence $\{k w_k\}_{k \ge 0}$ via
\begin{align*}
D(x) &= \sum_{k=0}^{\infty} k w_k x^k = x \frac{d}{dx} \mathcal{W}(x) = \frac{x}{(1-2x)^2}.
\end{align*}
Then, the second sum is the coefficient of $x^{n-k}$ in $D(x)(\mathcal{W}(x))^{k-1}$, i.e.
$$
\sum_{\lambda_1+\dots+\lambda_k=n-k} \lambda_1 w_{\lambda_1} w_{\lambda_2} \cdots w_{\lambda_k} = [x^{n-k}] (D(x) (\mathcal{W}(x))^{k-1}).
$$
Combining these, we see $S_{n,k}$ is the coefficient of $x^{n-k}$ in the sum of these two generating functions
$$
S_{n,k} = [x^{n-k}] (\mathcal{W}(x)^k + D(x)\mathcal{W}(x)^{k-1}) = [x^{n-k}] \left( (\mathcal{W}(x)+D(x))\mathcal{W}(x)^{k-1} \right).
$$

Because $b_n = \sum_{k=1}^n S_{n,k}$, we can now express the generating function $\mathcal{B}(x)$ in terms of $\mathcal{W}(x)$ and $D(x)$ as
$$
\mathcal{B}(x) = \sum_{n=1}^{\infty} b_n x^n = \sum_{n=1}^{\infty} \left( \sum_{k=1}^{n} [z^{n-k}] \left( (\mathcal{W}(z)+D(z))\mathcal{W}(z)^{k-1} \right) \right) x^n.
$$
Let $G(z) = \mathcal{W}(z)+D(z)$. Changing the order of summation gives
$$
\mathcal{B}(x) = \sum_{k=1}^{\infty} \sum_{n=k}^{\infty} ([z^{n-k}] G(z)\mathcal{W}(z)^{k-1}) x^n.
$$
The inner sum over $n$ is a convolution that results in the generating function $G(x)\mathcal{W}(x)^{k-1}$ itself, multiplied by $x^k$, i.e.
$$
\sum_{n=k}^{\infty} ([z^{n-k}] G(z)\mathcal{W}(z)^{k-1}) x^n = x^k G(x)\mathcal{W}(x)^{k-1}.
$$
Thus, $\mathcal{B}(x)$ can therefore be written as the geometric series
\begin{align*}
\mathcal{B}(x) &= \sum_{k=1}^{\infty} x^k G(x)\mathcal{W}(x)^{k-1} = xG(x) \sum_{k=1}^{\infty} (x\mathcal{W}(x))^{k-1} = \frac{xG(x)}{1-x\mathcal{W}(x)}.
\end{align*}

First, we compute $G(x) = \mathcal{W}(x) + D(x)$:
\begin{align*}
G(x) & = \frac{1-2x+2x^2}{(1-2x)^2}.
\end{align*}
Next, we compute the denominator of $\mathcal{B}(x)$ as
\begin{align*}
1 - x\mathcal{W}(x) &= \frac{1-3x+x^2}{1-2x}.
\end{align*}
Finally, substituting these into the expression for $\mathcal{B}(x)$ yields
\begin{align*}
\mathcal{B}(x) &= \frac{x \cdot \frac{1-2x+2x^2}{(1-2x)^2}}{\frac{1-3x+x^2}{1-2x}} = x \cdot \frac{1-2x+2x^2}{(1-2x)^2} \cdot \frac{1-2x}{1-3x+x^2} = \frac{x(1-2x+2x^2)}{(1-2x)(1-3x+x^2)}.
\end{align*}
The ordinary generating function for the sequence $b_n$ thus agrees with the generating function $\mathcal{A}^{(312, 4321)}(x)$ from Lemma \ref{lem: affine for 4321}, as desired.
\end{proof}

\subsubsection{Direct approach}\label{subsubsec: direct approach}
We now handle the remaining cases in Theorem \ref{thm: TITO enumeration length 4} by direct computation.

\begin{prop}
    The number of $(312, 1234)$-avoiding TITOs is $f_n^{(312, 1234)}= 1.$ Specifically, the only $(312, 1234)$-avoiding $n$-TITO is $[\underline{n, \dots, 2,1}]$.
\end{prop}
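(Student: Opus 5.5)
The plan is to split according to the block structure allowed by Lemma \ref{lem:312 blocks}: a $312$-avoiding TITO is a single waxing block, a single waning block, or a waxing block followed by a waning block. The goal is to show that every case containing a waxing block yields a $1234$ pattern. Then only the single-waning-block case survives, and it contributes exactly one TITO.

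First, suppose $\preceq$ has a waxing block $I$. Pick any $x \in I$. Because $I$ is waxing, we have $x \preceq x+n$, and applying this repeatedly (or using translation invariance) gives
\[
x \preceq x+n \preceq x+2n \preceq x+3n .
\]
All four elements lie in $I$, since blocks are unions of residue classes. They form the increasing integer sequence $x<x+n<x+2n<x+3n$, and it appears in the same order under $\preceq$, so it is an occurrence of $1234$. This is the same argument as the proof that $a_n^{(312,1234)}=0$. The only difference is that we now use it for any waxing block, not just for affine permutations. Hence every $(312,1234)$-avoiding TITO has no waxing block.

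By Lemma \ref{lem:312 blocks}, the TITO must then consist of a single waning block containing all $n$ residue classes, with its elements in decreasing order. Lemma \ref{lem:windows of 312}, or equivalently the description of cycles in Lemma \ref{lem:cycle}, pins the window down as $[\underline{n,\dots,2,1}]$. Concretely, the order is the reverse of the usual order on $\Z$: we have $b \preceq a$ whenever $a<b$.

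Finally, we check that this TITO actually avoids both patterns. Any $i_1<i_2<i_3$ appear in the order $i_3 \preceq i_2 \preceq i_1$. That ordering is a $321$ pattern, so it is neither $312$ nor the start of a $1234$ pattern. This gives $f_n^{(312,1234)}=1$.

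There is no real obstacle here. The only point needing care is that "decreasing order" in Lemma \ref{lem:312 blocks} really forces the order to be the reverse of the usual one on all of $\Z$, and hence that the window is $[\underline{n,\dots,1}]$.
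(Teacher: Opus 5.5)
Your proof is correct and follows essentially the same route as the paper. You rule out any waxing block via the chain $x \preceq x+n \preceq x+2n \preceq x+3n$, then invoke Lemma~\ref{lem:312 blocks} to force the single decreasing waning block $[\underline{n,\dots,1}]$, and finally check that it avoids both patterns; you just spell out the $1234$ occurrence and the avoidance check more explicitly than the paper does.
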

\begin{proof}
    Let $\preceq$ be a $(312, 1234)$-avoiding $n$-TITO. Since $\preceq$ avoids $1234$, it cannot have a waxing block. By Lemma \ref{lem:312 blocks}, $\preceq$ must have exactly one waning block. Furthermore, the elements of the waning block must appear in decreasing order, so we must have $\preceq \ = [\underline{n, \dots, 1}]$, which clearly avoids 1234 and 312. Thus, $f_n^{(312, 1234)}= 1$.
\end{proof}

\begin{prop}
    The number of $(312, 1243)$-avoiding TITOs is $f_n^{(312, 1243)}= 2.$
\end{prop}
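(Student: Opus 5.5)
We split the $(312,1243)$-avoiding $n$-TITOs $\preceq$ according to the three block structures allowed by Lemma \ref{lem:312 blocks}: a single waning block, a single waxing block, or a waxing block followed by a waning block. The claim is that exactly two TITOs survive, namely $[\underline{n,\dots,1}]$ and $[1,\dots,n]$.

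\textbf{Single waning block.} Lemma \ref{lem:312 blocks} forces $\preceq=[\underline{n,\dots,1}]$, which is strictly decreasing. Any occurrence of $1243$ would need two entries $i_1<i_2$ with $i_1\preceq i_2$, and a strictly decreasing order has no such pair. So this TITO avoids $1243$ and contributes $1$.

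\textbf{Single waxing block.} Here $\preceq$ is an affine permutation. By Theorem \ref{thm: 312 p avoiding affine} we have $a_n^{(312,1243)}=1$, the unique such TITO being $[1,\dots,n]$. This case contributes $1$.

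\textbf{Two blocks.} This is the key step: we must show no TITO of this shape avoids $1243$. Take any $x$ in the waxing block $I$ and any $y$ in the waning block $J$. The waxing block precedes the waning block, so every element of $I$ precedes every element of $J$. The elements $x,\,x+n,\,x+2n$ all lie in $I$ and satisfy $x\preceq x+n\preceq x+2n$. Choose an integer $m$ large enough that $y+mn>x+2n$. Since $J$ is waning, $y+(m+1)n\preceq y+mn$. Also $x+2n<y+mn<y+(m+1)n$. Hence the integers
\[
x<x+2n<y+mn<y+(m+1)n
\]
appear in $\preceq$ in the order
\[
x,\ x+2n,\ y+(m+1)n,\ y+mn,
\]
which is a $1243$ pattern. (Note $x+n$ is not needed.) So this case contributes nothing.

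\textbf{Conclusion.} The two surviving TITOs both avoid $312$, since the waxing one is increasing and the waning one is decreasing. Therefore $f_n^{(312,1243)}=2$. The only point requiring care is checking that the chosen indices are increasing in $\mathbb{Z}$ while appearing in the $1243$ relative order under $\preceq$, and the choice of $m$ handles this.
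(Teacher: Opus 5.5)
Your proof is correct and follows the same route as the paper: you split by the block structures allowed for $312$-avoiding TITOs, use $a_n^{(312,1243)}=1$ for the single waxing block, and note that the only single-waning-block TITO is $[\underline{n,\dots,1}]$. The one difference is that you justify why the waxing-then-waning case is impossible, via the explicit occurrence $x \preceq x+2n \preceq y+(m+1)n \preceq y+mn$, whereas the paper only asserts it.
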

\begin{proof}
    Since $\preceq$ avoids $1243$, it cannot consist of a waxing block followed by a waning block, so by Lemma \ref{lem:312 blocks}, $\preceq$ must have either exactly one waxing block or exactly one waning block.

    If $\preceq$ consists of exactly one waxing block, then it is also an affine permutation. From Theorem \ref{thm: 312 p avoiding affine}, we know $a_n^{(312, 1243)} = 1$. If $\preceq$ consists of exactly one waning block, then the elements of the waning block appear in decreasing order, so we must have $\preceq \ = [\underline{n, \dots, 1}]$, which clearly avoids 1243 and 312. Thus, $f_n^{(312, 1243)}= 2$.
\end{proof}

\begin{prop}
    The number of $(312, 4321)$-avoiding TITOs is 
    \begin{align*}
        f_n^{(312, 4321)}= a_n^{(312, 4321)} = \left(\frac{3+\sqrt{5}}{2}\right)^n + \left(\frac{3-\sqrt{5}}{2}\right)^n - 2^n.
    \end{align*}
\end{prop}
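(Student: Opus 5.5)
The plan is to show that no projective TITO avoids both $312$ and $4321$. Then every $(312,4321)$-avoiding TITO is a single waxing block, that is, an affine permutation, and the formula follows from Theorem \ref{thm: 312 p avoiding affine} (equivalently Proposition \ref{prop: closed form 4321}).

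By Lemma \ref{lem:312 blocks}, a $312$-avoiding TITO is projective exactly when it has a waning block, and the elements of that waning block appear in decreasing order. Take any element $x$ of the waning block. Since the block is waning, $x+n \prec x$, and translation invariance gives
$$x+3n \preceq x+2n \preceq x+n \preceq x.$$
Here the integers $x < x+n < x+2n < x+3n$ appear in reverse order in $\preceq$, which is an occurrence of $4321$. Therefore no projective TITO avoids $4321$, and $b_n^{(312,4321)}=0$.

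It follows that $f_n^{(312,4321)} = a_n^{(312,4321)}$. The value of this count is supplied by Theorem \ref{thm: 312 p avoiding affine}, via Lemma \ref{lem: affine for 4321} and Proposition \ref{prop: closed form 4321}.

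The only step with any content is checking that the orientation convention for patterns is right. The definition says that $i_1<\dots<i_4$ form $p$ if $i_{p_1}\preceq\cdots\preceq i_{p_4}$. For $p=4321$ this requires $i_4\preceq i_3\preceq i_2\preceq i_1$, which matches the chain displayed above. No further obstacle is expected, since the affine count is already established.
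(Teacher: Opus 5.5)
Your proposal is correct and follows the same route as the paper: rule out a waning block using $4321$, conclude from Lemma \ref{lem:312 blocks} that every $(312,4321)$-avoiding TITO is a single waxing block, and then quote the affine count. The paper simply asserts that a $4321$-avoiding TITO has no waning block, while you write out the occurrence $x+3n \preceq x+2n \preceq x+n \preceq x$ and check the orientation convention.
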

\begin{proof}
    Any 4321-avoiding TITO cannot have a waning block. Thus, by Lemma \ref{lem:312 blocks}, any $(312, 4321)$-avoiding TITO must also be an affine permutation, so $f_n^{(312, 4321)} = a_n^{(312, 4321)}$.
\end{proof}

\begin{prop}
    The number of $(312, 2341)$-avoiding TITOs is 
    \begin{align*}
        f_n^{(312, 2341)}= a_n^{(312, 2341)} + 1 = -2 + \lambda_1^{n}+\lambda_2^{n}+\lambda_3^{n}.
    \end{align*}
\end{prop}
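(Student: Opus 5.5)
We argue by splitting according to the block structure allowed by Lemma \ref{lem:312 blocks}. A $312$-avoiding TITO is one of three kinds: a single waxing block, a single waning block, or a waxing block followed by a waning block.

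\emph{Single waxing block.} Such a TITO is an affine permutation. By Theorem \ref{thm: 312 p avoiding affine} these contribute $a_n^{(312,2341)}=\lambda_1^n+\lambda_2^n+\lambda_3^n-3$.

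\emph{Single waning block.} By Lemma \ref{lem:312 blocks} the TITO is $[\underline{n,\dots,1}]$. It avoids $312$. It also avoids $2341$, since its entries appear in decreasing order in $\preceq$ and so contain no ascent at all, while $2341$ requires $i_1\preceq i_2\preceq i_3$ with $i_1<i_2<i_3$. This contributes exactly $1$.

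\emph{Two blocks.} The main step is to show that no $312$-avoiding TITO with both a waxing and a waning block avoids $2341$. Let $x$ lie in the waxing block and $y$ in the waning block, so $x\preceq y$. The waxing block contains the whole residue class of $x$, and these elements increase along $\preceq$: $x\preceq x+n\preceq x+2n\preceq\cdots$. All of them precede every element of the waning block. Choose an integer $y'\equiv y\pmod n$ with $y'<x$; it lies in the waning block. Then $x, x+n, x+2n$ all come before $y'$ in $\preceq$. Written in increasing integer order the four elements are $y'<x<x+n<x+2n$, with $\preceq$-order $x\preceq x+n\preceq x+2n\preceq y'$. In pattern terms, $i_1=y'$, $i_2=x$, $i_3=x+n$, $i_4=x+2n$, and $i_2\preceq i_3\preceq i_4\preceq i_1$, which is exactly a $2341$ pattern. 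So this case contributes $0$.

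Adding the three contributions gives $f_n^{(312,2341)}=a_n^{(312,2341)}+1=\lambda_1^n+\lambda_2^n+\lambda_3^n-2$.

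The only step needing care is matching the pattern convention: $p=2341$ requires $i_{p_1}\preceq i_{p_2}\preceq i_{p_3}\preceq i_{p_4}$, that is, $i_2\preceq i_3\preceq i_4\preceq i_1$, and the triple $x,x+n,x+2n$ together with a smaller translate from the waning block realizes this. The remaining work is routine bookkeeping against Theorem \ref{thm: 312 p avoiding affine}.
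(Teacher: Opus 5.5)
Your proof is correct and takes the same route as the paper. You split by the block structure of Lemma \ref{lem:312 blocks}, count single waxing blocks by $a_n^{(312,2341)}$, add $1$ for $[\underline{n,\dots,1}]$, and rule out the two-block case; the only difference is that you give an explicit $2341$ occurrence $x \preceq x+n \preceq x+2n \preceq y'$ with $y'<x$, where the paper simply asserts that case is impossible.
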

\begin{proof}
    Any 2341-avoiding TITO cannot have a waxing block followed by a waning block. Thus, by Lemma \ref{lem:312 blocks}, every $(312, 2341)$-avoiding TITO other than $[\underline{n, \dots, 1}]$ must be a $(312, 2341)$-avoiding affine permutation. Thus, $f_n^{(312, 2341)} = a_n^{(312, 2341)} + 1$.
\end{proof}

\begin{prop}
    For $p \in \{3421, 2431\}$, the number of $(312, p)$-avoiding TITOs is 
    \begin{align*}
        f_n^{(312, 3421)}= f_n^{(312, 2431)} = a_n^{(312, 4321)} + 1 = \left(\frac{3+\sqrt{5}}{2}\right)^n + \left(\frac{3-\sqrt{5}}{2}\right)^n - 2^n + 1.
    \end{align*}
\end{prop}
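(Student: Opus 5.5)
Every $(312,p)$-avoiding TITO will be split by its block structure from Lemma \ref{lem:312 blocks}: exactly one waxing block, exactly one waning block, or a waxing block followed by a waning block.

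First I will rule out the two-block case for both values of $p$. Suppose $\preceq$ has a waxing block $I$ followed by a waning block $J$. Pick any $x \in I$ and $z \in J$. Then
\[ x \prec x+n \prec z+n \prec z, \]
because $I$ is waxing, $J$ is waning, and every element of $I$ precedes every element of $J$. Replacing $x$ and $z$ by suitable translates, I may take $x+n < z$, so all four integers $x, x+n, z, z+n$ are distinct.

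For $p=3421$, I want indices $i_1<i_2<i_3<i_4$ with $i_3 \preceq i_4 \preceq i_2 \preceq i_1$. I will take two elements of $I$ that are larger than two elements of $J$, say $x$ and $x+n$ with $x$ large and $z$ small. Then $x \preceq x+n \preceq z+n \preceq z$ gives, in TITO order, values large, larger, small, smaller. As a subsequence of integers, the positions are $z < z+n < x < x+n$, and they appear in the order $x, x+n, z+n, z$. This is the pattern $3421$. For $p=2431$ I will reuse the same ingredients, choosing an element $y$ of $J$ with $y$ in the right relative position. A natural candidate is $x \prec x+n \prec z+n \prec z$ with $z < x < z+n < x+n$, which gives the order pattern $2431$. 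I only need to choose residues and translates so that these inequalities hold, which is possible since $J$ contains elements of every translate of its residues.

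The single-waning-block case contributes exactly the TITO $[\underline{n,\dots,1}]$. This follows from Lemma \ref{lem:312 blocks}, since the waning block is decreasing and a decreasing TITO contains neither $3421$ nor $2431$. The single-waxing-block case consists of affine permutations, counted by $a_n^{(312,p)}$. Theorem \ref{thm: 312 p avoiding affine} gives $a_n^{(312,3421)} = a_n^{(312,2431)} = a_n^{(312,4321)}$, and this common value has the closed form in Proposition \ref{prop: closed form 4321}. Adding $1$ for the waning case gives the claimed formula.

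The main obstacle is the two-block exclusion, specifically checking that the translates can be chosen so that the required integer inequalities hold for each pattern. I expect this to work because translating $z$ by multiples of $n$ moves it freely relative to $x$ while preserving the TITO relations within $J$ and between $I$ and $J$.
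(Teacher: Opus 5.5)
Your proof is correct and follows essentially the same route as the paper's: rule out the waxing-then-waning case, count the single waning TITO $[\underline{n,\dots,1}]$, and use $a_n^{(312,3421)}=a_n^{(312,2431)}=a_n^{(312,4321)}$ from Theorem~\ref{thm: 312 p avoiding affine}. Your explicit witnesses from $x\prec x+n\prec z+n\prec z$ fill in the exclusion step that the paper only asserts: take a translate $z\in J$ with $z+n<x$ for $3421$, and the translate with $z\in(x-n,x)$ for $2431$, which exists because $z\not\equiv x \pmod n$. You should drop the stray normalization $x+n<z$ in your first paragraph, since it contradicts the choice you need for $3421$.
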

\begin{proof}
    Any 3421-avoiding TITO cannot have a waxing block followed by a waning block. Thus, by Lemma \ref{lem:312 blocks}, every $(312, 3421)$-avoiding TITO other than $[\underline{n, \dots, 1}]$ must be a $(312, 3421)$-avoiding affine permutation. Since $[\underline{n, \dots, 1}]$ is $(312, 3421)$-avoiding, we get \allowbreak $f_n^{(312, 3421)} \allowbreak = a_n^{(312, 3421)} + 1 = a_n^{(312, 4321)} + 1$ by Theorem \ref{thm: 312 p avoiding affine}. 

    The argument for $f_n^{(312, 2431)}$ is identical.
\end{proof}

\begin{prop}   
    The number of $(312, 3241)$-avoiding TITOs is
    $$f_n^{(312, 3241)} = f_n^{(312, 4321)} + 2^n - 1 = \left(\frac{3+\sqrt{5}}{2}\right)^n + \left(\frac{3-\sqrt{5}}{2}\right)^n - 1.$$
\end{prop}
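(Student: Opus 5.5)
We split $f_n^{(312,3241)} = a_n^{(312,3241)} + b_n^{(312,3241)}$. By Theorem \ref{thm: 312 p avoiding affine}, $a_n^{(312,3241)} = a_n^{(312,4321)}$. It therefore suffices to show $b_n^{(312,3241)} = 2^n - 1$. Since $f_n^{(312,4321)} = a_n^{(312,4321)}$, this gives $f_n^{(312,3241)} = f_n^{(312,4321)} + 2^n - 1$, and the closed form follows from Proposition \ref{prop: closed form 4321}.

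For the projective part, Lemma \ref{lem:312 blocks} says a projective $312$-avoiding TITO has a (possibly empty) waxing block followed by a nonempty waning block, and the waning block is strictly decreasing. We will show that $3241$-avoidance forces the waxing block to be increasing. Suppose the waxing block had an inversion. Then some window contains a lower wall $b \preceq a$ with $a<b$, and $b-a<n$ by Lemma \ref{lem:windows of 312}. Choose any element $z$ of the waning block with $z<a$; this is possible because the waning block contains every residue class it meets at arbitrarily negative values. Next pick $c$ in the waxing block with $c>b$ and $a \preceq c$, for instance $c=a+Nn$ or $b+Nn$ with $N$ large. Then $b \preceq a \preceq c \preceq z$, and the values satisfy $z<a<b<c$, so the relative order is $3,2,4,1$, a $3241$ pattern.

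We must also check that the waxing block contains an element after $a$ that is larger than $b$. Taking $c=a+n$ works: $a\preceq a+n$ because the block is waxing, and $a+n>b$ because $b-a<n$. This justifies the previous step.

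Conversely, take a TITO with an increasing waxing block $[a_1<\dots<a_k]$, where the window entries are increasing consecutive in order, followed by a strictly decreasing waning block. A $3241$ occurrence needs its last entry $x_4$ to be the minimum. This entry cannot lie in the waxing block: the waxing block is increasing in $\preceq$, so all earlier entries would be smaller. Hence $x_4$ lies in the waning block. The waning block is decreasing, and in $3241$ the third entry exceeds the fourth, so the third entry could be in either block, but the first three entries $3,2,4$ contain a descent $3\to2$. In the waning block, positions $x_2,x_3$ with $x_3>x_2$ would contradict decreasing order, so $x_3$ and hence $x_2,x_1$ lie in the waxing block. Then $x_1 \preceq x_2$ with $x_1>x_2$ contradicts the waxing block being increasing. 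So these TITOs avoid $3241$, and they also avoid $312$ by the proposition on $(312,213)$.

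Counting them as in that proposition gives $\sum_{i=0}^{n-1}\binom{n}{i} = 2^n-1$, the number of ways to choose the nonempty set of residues in the waning block. The only step needing care is the construction of the $3241$ witness, specifically the choice of the minimal element $z$ from the waning block, and I expect it to go through as sketched.
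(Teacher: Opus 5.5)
Your proof is correct and follows the paper's argument. You split off $a_n^{(312,3241)}=a_n^{(312,4321)}$, show that a projective $312$-avoiding TITO avoids $3241$ exactly when its waxing block is increasing, and count the $2^n-1$ nonempty choices of waning residues; the only cosmetic difference is that you take the ``4'' of the witness from the waxing block (e.g.\ $a+n$) rather than from the waning block. One small slip in your converse: $x_3$ need not lie in the waxing block, but you only need $x_2$ (hence $x_1$) to be waxing, which does follow because $x_2$ waning would force $x_3$ waning.
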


\begin{proof}
    From Theorem \ref{thm: 312 p avoiding affine}, we have that $$a_n^{(312, 3241)} = a_n^{(312, 4321)} = \left(\frac{3+\sqrt{5}}{2}\right)^n + \left(\frac{3-\sqrt{5}}{2}\right)^n - 2^n.$$ It therefore remains to show that $b_n^{(312, 3241)} = 2^n - 1$.

    Let $\preceq$ be a projective $312$-avoiding TITO. By Lemma \ref{lem:312 blocks}, $\preceq$ either is the single waning block $[\underline{n, \dots, 1}]$ or has a waxing block followed by a decreasing waning block. In the latter case, we claim $\preceq$ is also $3241$-avoiding if and only if the waxing block is also increasing, that is, the waxing block contains no inversion.

    The backward direction is immediate. To show the forward direction, let us assume $(a, b)$ is an inversion within the waxing block. Since the waning block is decreasing, there exist $c, d$ in the waning block such that $b \preceq a \preceq c\preceq d$ with $d < a < b < c$, which forms a $3241$ pattern.
    
    The TITO $[\underline{n, \dots, 1}]$ is also $(312, 3241)$-avoiding. This is the only remaining case because each residue class $i \in [n]$ may belong to either the waxing block or the waning block, but we cannot have all elements in the waxing block. Thus, we get $b_n^{(312, 3241)} = 2^n - 1$.
\end{proof}

\begin{lem}\label{lem:increasing waxing}
    Let $\preceq$ be a $312$-avoiding TITO. For $p \in \{2134, 2143, 1324\}$, the TITO $\preceq$ is also $p$-avoiding if and only if elements of the waxing block appear in increasing order and elements of the waning block appear in decreasing order.
\end{lem}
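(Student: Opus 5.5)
The plan is to prove the two directions separately, using Lemma \ref{lem:312 blocks} to fix the block structure. By that lemma, $\preceq$ has at most two blocks, the first waxing and the second waning, and the waning block (if present) is already decreasing. So the condition on the waning block is automatic, and the statement reduces to the following claim: a $312$-avoiding $\preceq$ avoids $p\in\{2134,2143,1324\}$ if and only if its waxing block (if any) has no inversions, that is, it equals the increasing order on its residue classes.

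\emph{Forward direction (contrapositive).} Suppose the waxing block is not increasing. Then some window of it contains a lower wall $(a,b)$ with $a<b$ and $b\preceq a$. By Lemma \ref{lem:windows of 312}, $a<b<a+n$. I will reuse the patterns from the proof that $a_n^{(312,p)}=1$ for these $p$.
\begin{itemize}
\item For $2134$: $b\preceq a\preceq a+n\preceq a+2n$, since $a\preceq a+n$ in a waxing block, and the values satisfy $a<b<a+n<a+2n$.
\item For $2143$: $b\preceq a\preceq b+n\preceq a+n$, which is the translate of the cover $b\preceq a$ placed after it; the values satisfy $a<b<a+n<b+n$.
\item For $1324$: $a-n\preceq b\preceq a\preceq b+n$, with values $a-n<a<b<b+n$.
\end{itemize}
These relations hold because they only involve elements of the waxing block. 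The one point to check is that $b\preceq a$ forces $a\preceq b+n$ and $a-n\preceq b$. This holds because $b\preceq a$ is a cover and the block is waxing: $b\prec b+n$, so the successor $a$ of $b$ satisfies $a\preceq b+n$. Similarly $a-n\preceq b$, because $a-n$ is covered by $b-n$ and $b-n\prec b$.

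\emph{Backward direction.} Suppose the waxing block is increasing, with residue set $R$, and the waning block is decreasing, with residue set $R^c$. Then the TITO is: all integers with residues in $R$ in increasing order, followed by all integers with residues in $R^c$ in decreasing order. Each of $2134$, $2143$, $1324$ has an inversion (a descent in TITO order) followed later by an ascent. Equivalently, it contains elements $x\preceq y\preceq z$ with $x>y$ and $y<z$, where the descent comes first.
\begin{itemize}
\item A descent forces the two elements into the waning block, or the first in the waxing block and the second in the waning block.
\item In either case the later elements of the pattern lie in the waning block, which is decreasing, so no subsequent ascent is possible.
\end{itemize}
More carefully: each of the three patterns ends with an ascent $p_{k-1}<p_k$ in TITO position, and this ascent sits after a descent. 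An ascent $u\preceq v$ with $u<v$ needs $u$ in the waxing block. Hence everything before $u$ is in the waxing block, which is increasing, and that contradicts the earlier descent.

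\emph{Main obstacle.} The hardest step is verifying the order relations in the forward direction, specifically that $a\preceq b+n$ and $a-n\preceq b$ follow from the cover relation. Everything else is a short case check.
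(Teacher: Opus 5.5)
Your proof is correct and follows the paper's proof. The waning-block condition is automatic by Lemma~\ref{lem:312 blocks}, and the forward direction uses Lemma~\ref{lem:windows of 312} with exactly the same three explicit occurrences. Your backward argument spells out what the paper dismisses with ``we see that'': each pattern contains a descent followed by an ascent in $\preceq$-order (a $213$), which an increasing waxing block followed by a decreasing waning block cannot contain.

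There are two harmless slips. First, $2143$ does not end with an ascent; use the ascent from $1$ to $4$ after the descent $2\,1$, as your $x\preceq y\preceq z$ formulation already does. Second, $a-n$ covers $b-n$, not the reverse; $a-n\preceq b$ still follows, since $a-n$ is the successor of $b-n$ and $b-n\prec b$.
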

\begin{proof}
    We first prove the reverse implication. If $\preceq$ consists of only one block, then its elements appear in increasing order when the block is waxing and in decreasing order when it is waning, by hypothesis. In either case, $\preceq$ avoids $p$.

    If $\preceq$ consists of a waxing block that occurs in increasing order followed by a waning block that occurs in decreasing order, we see that $\preceq$ also avoids $p$ and $312$.

    Now, we prove the forward implication via the contrapositive. If $\preceq$ has a waning block that does not occur in strictly decreasing order, then by Lemma \ref{lem:312 blocks}, $\preceq$ is not $312$-avoiding. If $\preceq$ has a waxing block that does not appear in increasing order, then some window of this block contains a lower wall $(a,b)$. By Lemma~\ref{lem:windows of 312}, we have $a<b<a+n$, so
    \begin{align*}
        b \preceq a \preceq a+n \preceq a+2n \quad \text{is a $2134$ pattern}, \\
        b \preceq a \preceq b+n \preceq a+n \quad \text{is a $2143$ pattern}, \\
        a-n \preceq b \preceq a \preceq b+n \quad \text{is a $1324$ pattern}.
    \end{align*}
    The lemma follows.
\end{proof}

\begin{prop}
For $p \in \{2134, 2143, 1324\}$, the number of $(312, p)$-avoiding TITOs is
    \begin{align*}
        f_n^{(312, p)} = 2^n.
    \end{align*}
\end{prop}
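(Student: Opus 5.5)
By Lemma \ref{lem:increasing waxing}, for each $p \in \{2134, 2143, 1324\}$, a $312$-avoiding TITO is $p$-avoiding exactly when its waxing block, if present, is increasing and its waning block, if present, is decreasing. The plan is to count such TITOs directly. We use Lemma \ref{lem:312 blocks} to split into three cases by block structure.

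If there is a single waxing block, it is an affine permutation whose elements appear in increasing order, so the TITO is $[1, \dots, n]$. This agrees with $a_n^{(312,p)} = 1$ from Theorem \ref{thm: 312 p avoiding affine}. If there is a single waning block, it must be $[\underline{n, \dots, 1}]$. If there are two blocks, a waxing block followed by a waning block, the TITO is determined by which residue classes modulo $n$ lie in the waxing block. That set can be any subset $S \subseteq [n]$ with $\emptyset \neq S \neq [n]$: the waxing block is then the increasing order on the integers congruent to elements of $S$, and the waning block is the decreasing order on the rest.

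Conversely, every such choice of $S$ gives a genuine TITO. We must still check that it is $312$-avoiding, but this was already observed in the reverse direction of the proof of Lemma \ref{lem:increasing waxing}, which applies to any increasing waxing block followed by a decreasing waning block. Therefore the two-block case contributes $\sum_{i=1}^{n-1}\binom{n}{i} = 2^n - 2$. Adding the two single-block TITOs gives $f_n^{(312,p)} = 2^n$, exactly as in the count for $(312,213)$.

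There is no real obstacle here. The only point needing care is that distinct subsets $S$ give distinct TITOs and that each construction really is translation-invariant. Both are immediate, since the residue classes in each block and the monotone order within each block determine the TITO completely.
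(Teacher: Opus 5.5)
Your proposal is correct and matches the paper's proof: both invoke Lemma \ref{lem:increasing waxing} and count by the set of residue classes in the waxing block, giving $\sum_{i=0}^{n}\binom{n}{i} = 2^n$. Your explicit handling of the two single-block TITOs ($S=\emptyset$ and $S=[n]$) and of the $312$-avoidance check fills in details the paper leaves implicit.
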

\begin{proof}
    By Lemma \ref{lem:increasing waxing}, we get
    \begin{align*}
        f_n^{(312, p)} = \binom{n}{0} + \binom{n}{1} + \cdots + \binom{n}{n} = (1+1)^n = 2^n,
    \end{align*}
    for $p \in \{2134, 2143, 1324\}$. 
\end{proof}

\begin{prop}
    For $p \in \{1432, 1342\}$, the number of $(312, p)$-avoiding TITOs is 
    \begin{align*}
        f_n^{(312, 1432)}= f_n^{(312, 1342)} = 2^n.
    \end{align*}
\end{prop}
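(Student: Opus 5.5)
We split by block structure (Lemma \ref{lem:312 blocks}): $f_n^{(312,p)} = a_n^{(312,p)} + b_n^{(312,p)}$. Theorem \ref{thm: 312 p avoiding affine} already gives $a_n^{(312,1432)} = a_n^{(312,1342)} = 2^n-1$. It therefore suffices to show $b_n^{(312,p)} = 1$, i.e.\ the only projective $(312,p)$-avoiding TITO is $[\underline{n,\dots,1}]$.

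First we check that $[\underline{n,\dots,1}]$ avoids both patterns. This TITO is a strictly decreasing order on $\Z$. Consequently every occurrence of a pattern in it is the pattern $k\cdots 21$, and neither $1432$ nor $1342$ has this form.

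Next we rule out two-block TITOs. Let $\preceq$ be $312$-avoiding with a waxing block followed by a waning block, as Lemma \ref{lem:312 blocks} forces. We exhibit an occurrence of $p$ directly. Take any $x$ in the waxing block. Its translates $x - Nn$ also lie in the waxing block, and every element of the waxing block precedes every element of the waning block. Since the waning block is decreasing, we can choose three elements $c_1 > c_2 > c_3$ of the waning block with $c_1 \preceq c_2 \preceq c_3$, all larger than some translate $y = x - Nn$ (take $N$ large). Then $y \preceq c_1 \preceq c_2 \preceq c_3$ with $y < c_3 < c_2 < c_1$, which is a $1432$ pattern.

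For $1342$ we need $y \preceq u \preceq v \preceq w$ with $y < w < u < v$. We plan to use two elements of the waxing block followed by one waning element. We take $y$ and $u = y + Mn$, with $y \preceq u$ because the block is waxing. We then need $v$ after $u$ with $v > u$, followed by $w$ in the waning block with $y < w < u$. We take $v$ in the waning block larger than $u$ (possible by translating), and then $w$ a later, smaller element of the waning block with $y < w < u$. Since the waning block contains arbitrarily large and small elements in decreasing order, and $Mn$ can be taken larger than $n$, some element of the waning block lands in the interval $(y,u)$ after $v$. This is the main bookkeeping step: we must ensure $w$ can be chosen strictly between $y$ and $u$. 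Taking $M \ge 2$ guarantees the open interval $(y, y+Mn)$ contains a full residue system, so such a $w$ exists.

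Combining the pieces, $b_n^{(312,p)} = 1$, and hence $f_n^{(312,p)} = 2^n - 1 + 1 = 2^n$.
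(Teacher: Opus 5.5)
Your proof is correct and takes essentially the paper's route: you split off the affine permutations, counted as $2^n-1$, and show that the only projective survivor is $[\underline{n,\dots,1}]$. The difference is that you give explicit occurrences of $1432$ and of $1342$ in every two-block $312$-avoiding TITO, which the paper only asserts. For $1342$ your witness $y \prec y+Mn \prec v \prec w$ is genuinely different from the $1432$ one, and even $M=1$ suffices there, since $(y,y+n)$ contains every residue class other than that of $y$.
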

\begin{proof}
    We first compute $f_n^{(312, 1432)}$. Since $\preceq$ avoids $1432$, it cannot consist of a waxing block followed by a waning block, so it must either have exactly one waxing block or exactly one waning block by Lemma \ref{lem:312 blocks}. 

    From Theorem \ref{thm: 312 p avoiding affine}, we know that $a_n^{(312, 1432)} = 2^n - 1$. Moreover, if $\preceq$ consists of exactly one waning block, the elements of the waning block occur in decreasing order, so we must have $\preceq \ = [\underline{n, \dots, 1}]$, which clearly avoids 1432 and 312. Thus, we get $f_n^{(312, 1432)}= 2^n$.

    The argument for $f_n^{(312, 1342)}$ is identical. 
\end{proof}

\section{Future directions}\label{sec: future directions}
Pattern avoidance in TITOs is a new topic at the intersection of pattern avoidance and algebraic combinatorics. As such, there are several promising avenues for future research, many of which we list below.

\begin{enumerate}
    \item One natural direction is to continue the investigation of pattern avoidance for longer patterns. For instance, one could pursue the systematic enumeration of TITOs that simultaneously avoid two patterns of length $4$.
    
    \item A second direction is to refine and extend our enumerative results by introducing a second statistic, such as the inversion number, to find $q$-analogues for our results. For example, Biagioli, Jouhet, and Nadeau \cite{BJN19} provide a formula for enumerating $321$-avoiding affine permutations with respect to the inversion number using heaps of pieces. Extending this type of analysis from affine permutations to the broader class of TITOs would allow for a more granular combinatorial understanding of these orders.
    
    \item The TITOs in this paper are connected to the affine symmetric group $\tilde{S}_{n}$, which is a Coxeter group of type~$\tilde{A}_{n-1}$. There are analogous objects for other affine Coxeter groups, such as types~$\tilde{B}_r, \tilde{C}_r,$ or~$\tilde{D}_r$. For instance, for a positive odd integer $N = 2r+1$, a type~$\tilde{C}_r$ TITO is a total order~$\preceq$ on the integers such that $i \preceq j$ if and only if $i + N \preceq j + N$ and $-j \preceq -i$ (see \cite{BS24}). One compelling open problem is to determine whether our enumerative techniques, such as the bijection with arc diagrams or the decomposition into waxing and waning blocks, can be adapted to these other affine settings.
\end{enumerate}

\section*{Acknowledgments}
This research was conducted at the University of Minnesota Duluth REU with support from Jane Street Capital, NSF Grant $2409861$, and donations from Ray Sidney and Eric Wepsic. I would like to thank Colin Defant for his invaluable guidance throughout the research process. I would also like to extend special thanks to Eliot Hodges, Rupert Li, Ilaria Seidel, and Isaac Rajagopal for their helpful suggestions during the editing process. Finally, I am very grateful to Joe Gallian and Colin Defant for organizing the Duluth REU and for the invitation to participate.

\end{document}